\newcommand{\bd}{\bm{d}}
\newcommand{\C}{\mathbb{C}}
\newcommand{\N}{\mathbb{N}}
\newcommand{\R}{\mathbb{R}}
\newcommand{\Z}{\mathbb{Z}}
\newcommand{\boB}{\mathcal{B}}
\newcommand{\boC}{\mathcal{C}}
\newcommand{\boE}{\mathcal{E}}
\newcommand{\boF}{\mathcal{F}}
\newcommand{\boH}{\mathcal{H}}
\newcommand{\boI}{\mathcal{I}}
\newcommand{\boJ}{\mathcal{J}}
\newcommand{\boL}{\mathcal{L}}
\newcommand{\boM}{\mathcal{M}}
\newcommand{\boN}{\mathcal{N}}
\newcommand{\boO}{\mathcal{O}}
\newcommand{\boQ}{\mathcal{Q}}
\newcommand{\boU}{\mathcal{U}}
\newcommand{\boW}{\mathcal{W}}
\newcommand{\boX}{\mathcal{X}}
\newcommand{\boY}{\mathcal{Y}}
\newcommand{\boZ}{\mathcal{Z}}
\newcommand{\goE}{\mathfrak{E}}
\newcommand{\goH}{H}
\newcommand{\goL}{\mathfrak{L}}
\newcommand{\goN}{\mathfrak{N}}
\newtheorem{corollary}{Corollary}
\newtheorem{lemma}[corollary]{Lemma}
\newtheorem{proposition}[corollary]{Proposition}
\newtheorem{theorem}{Theorem}
\newtheorem{step}{Step}
\newtheorem*{theorem*}{Theorem}
\theoremstyle{definition}
\newtheorem*{merci}{Acknowledgments}
\newtheorem*{remarks}{Remarks}
\theoremstyle{remark}
\begin{document}

\title{Bifurcating solitonic vortices in a strip}
\author{
\renewcommand{\thefootnote}{\arabic{footnote}}
Amandine Aftalion\footnotemark[1],~Philippe Gravejat\footnotemark[2]~ and \'Etienne Sandier\footnotemark[3]}
\footnotetext[1]{Laboratoire de Math\'ematiques d’Orsay, CNRS-UMR 8628, Universit\'e Paris Saclay, 91405 Orsay, France. E-mail: {\tt amandine.aftalion@math.cnrs.fr}}
\footnotetext[2]{CY Cergy Paris Universit\'e, Laboratoire Analyse, G\'eom\'etrie, Mod\'elisation (UMR CNRS 8088), F-95302 Cergy-Pontoise, France. E-mail: {\tt philippe.gravejat@cyu.fr}}
\footnotetext[3]{LAMA, Universit\'e Paris Est Cr\'eteil, Universit\'e Gustave Eiffel, UPEM, CNRS, F-94010 Créteil,
France. E-mail: {\tt sandier@u-pec.fr}}
\maketitle

\begin{abstract}
The specific geometry of a strip provides connections between solitons and solitonic vortices, which are vortices with a solitonic behaviour in the infinite direction of the strip. We show that there exist stationary solutions to the Gross-Pitaevskii equation with $k$ vortices on a transverse line, which bifurcate from the soliton solution as the width of the strip is increased. After decomposing into Fourier series with respect to the transverse variable, the construction of these solitonic vortices is achieved by relying on a careful analysis of the linearized operator around the soliton solution: we apply a fixed point argument to solve the equation in the directions orthogonal to the kernel of the linearized operator, and then handle the direction corresponding to the kernel by an inverse function theorem.
\end{abstract}

%%%%%%%%%%%%%%
%%%%%%%%%%%%%%
%%%%%%%%%%%%%%
\section{Introduction}
%%%%%%%%%%%%%%
%%%%%%%%%%%%%%
%%%%%%%%%%%%%%

Our manuscript is devoted to the analysis of the Gross-Pitaevskii equation
$$
i \partial_t \Psi + \Delta \Psi + \Psi \big( 1 - |\Psi|^2 \big) = 0,
$$
in an infinite strip $\R \times (0, d)$ of width $d > 0$, with Neumann boundary conditions
\begin{equation}
\label{eq:cond-Neumann}
\partial_n \Psi = 0 \text{ on } \R \times \{ 0, d \}.
\end{equation}
We focus on solutions of the following stationary Gross-Pitaevskii equations, also called Ginzburg-Landau equation in the mathematical literature:
\begin{equation}
\label{eq:GL}
\tag{GP}
\Delta \Psi +\Psi \big( 1 - |\Psi|^2 \big) = 0.
\end{equation}
A specific solution is the one-variable black soliton $S_0$ which tends to $\pm 1$ as $x$ tends to $\pm\infty$. It is given by the explicit formula
$$
S_0(x) = \tanh \Big( \frac{x}{\sqrt{2}} \Big).
$$
It is known that for $d$ small, this soliton is the unique stationary solution~\cite{AftaSan1}. For $d$ large, it has been proved in~\cite{AftaSan1} that the ground state of the energy under the condition that there is a zero at $x=y=0$ is a solitonic vortex, that is a solution with a zero at the origin but which looks like the soliton in the infinite direction: a solitonic vortex does not have an algebraic decay but an exponential decay at infinity.

The general mathematical pattern of solutions as $d$ is increased is still not clear though it has been the focus of many experimental and numerical papers. Our aim is to characterize solutions as $d$ is increased. More precisely, we are going to show that, as the width $d$ of the strip is increased, there exist stationary solutions close to the black soliton, but with $k$ vortices on the line $x = 0$, $k$ depending on the width of the strip.

%%%%%%%%%%%%%%%%%%%%
%%%%%%%%%%%%%%%%%%%%
\subsection{Physical motivation}
%%%%%%%%%%%%%%%%%%%%
%%%%%%%%%%%%%%%%%%%%

Black solitons are observed in systems that combine dispersion with a defocusing or repulsive interaction. Solitonic structures arise in many physical systems such as surface water waves, nematic liquid crystals, mechanical lattices of coupled pendula, electrical transmission lines, nonlinear Kerr media and more recently in atomic Bose–Einstein condensates (BECs)~\cite{DauxPey0}. There are different ways to create solitonic states in BECs: either by dragging a laser beam through a BEC, or phase-imprinting, or matter-wave interference~\cite{Frantze1}. Recent experiments focused on exploring two-dimensional (2D) and three-dimensional (3D) solitary waves in detail. The intriguing resulting structures, including their breakup into vortical patterns in both bosonic and Fermi gases, is the subject of wide investigation and has a wide variety of potential applications ranging from atomic matter-wave interferometers to producing two-level qubit systems.

In this paper, we are interested in the instability of solitons in reduced dimensions following recent experimental and numerical studies. Quite a few experimental groups have recently attempted to study solitons by imposing a phase shift in an elongated condensate for respectively bosonic atoms (rubidium~\cite{BBDESSS1} and sodium~\cite{DSTPDLF1, LaDoSDF1}) and for fermionic atoms (lithium~\cite{BuFoKRW1, KJMGCYZ1, YSKCJBZ1}). One of the issue was to observe solitons and analyze their decay or instability. From the first picture, they thought they had observed solitons~\cite{LaDoSDF1, YSKCJBZ1}. Further investigations were needed to fully understand the phenomena: in the case of lithium, they realized it was not a soliton but thought it was a vortex ring~\cite{BuFoKRW1}, until~\cite{KJMGCYZ1} argued that in fact it was a single straight vortex called solitonic vortex. In sodium, it was also confirmed it was not a soliton but a solitonic vortex~\cite{DSTPDLF1}. Therefore, the issue to determine the existence and stability of solitons in a strip is a main one.

Numerical simulations to study the stability of solitons rely on the time-dependent Gross-Pitaevskii equation to obtain solitary wave solutions for infinitely elongated 2D or 3D traps. According to the width of the trap, or the strength of interactions, the soliton is either stable or exhibits bifurcation patterns. The full mechanism of decay of solitons at the onset of instability is still not completely clear. In a series of studies, Komineas et al.~\cite{Kominea1, KopaPap2, KopaPap3} analyze how the soliton destabilizes according to the strip width. For narrow traps, the soliton is stable. In some intermediate cases, the soliton initially deforms to become a pair of vortex-antivortex in 2D or a vortex ring in 3D. This structure eventually decays into a stable solitonic vortex. The numerics reveal that the vortex-antivortex pair or vortex ring is unstable, but it is sufficiently long lived to be observed both in the numerics and the experiments. As the transverse size of the trap is further increased, more pairs of vortex-antivortex solutions are exhibited.

In~\cite{BranRei1}, they analyze numerically the linear stability of solitons and find that the soliton instability is associated with the formation of one, two, and three vortices in the regimes where one, two, and three imaginary eigenvalues are present for the linearized operator, as the transverse size of the trap increases. It is this pattern of one, two, three, etc vortices that we want to investigate in this paper.

Our aim is to analyze the eigenvalues of the linearized operator to better understand the type of bifurcation to one or several vortices and prove the existence of such solutions close to the soliton according to the width of the strip.

Similar questions arise in the simulations for solitary waves, that is existence of solutions with several vortices close to the soliton, but moving at velocity $c$~\cite{Kominea1, KopaPap3}. The mathematical treatment of such issues seems to be more involved even though~\cite{deLgrSm2, deLGrSm1} have settled the framework for the construction of minimizing solitary waves.

From a mathematical point of view, the instability of solitons is a key question. Rousset and Tzvetkov~\cite{RousTzv3} have proved for instance the instability of solitons in the whole space but nevertheless nothing is known about the mode of destabilization: whether it turns into a single vortex, a pair of vortices in dimension two or a vortex ring in higher dimension. Here the specificity of the geometry of the strip and the existence of solitonic vortices leads to new and different mathematical issues.

%%%%%%%%%%%%%%%
%%%%%%%%%%%%%%%
\subsection{Main result}
%%%%%%%%%%%%%%%
%%%%%%%%%%%%%%%

Our aim is to solve the stationary Gross-Pitaevskii equation~\eqref{eq:GL} with Neumann boundary conditions~\eqref{eq:cond-Neumann} for solutions having the symmetry properties
\begin{equation}
\label{eq:sym}
\psi(- x, y) = - \overline{\psi(x, y)}.
\end{equation} This property corresponds to the physical technique leading to the production of solitons, namely phase imprinting: a phase shift of $\pi$ around the $x=0$ axis is imposed, leading to this property.

Our starting point (see e.g.~\cite{diMeGal1}) is to observe that the linearized operator of the one-variable stationary Gross-Pitaevskii equation around the soliton $S_0$ has a unique negative eigenvalue $- 1/2$, whose eigenspace is spanned by the function
\begin{equation}
\label{eq:chi-0}
\chi_0(x) = \frac{1}{\cosh \big( \frac{x}{\sqrt{2}} \big)}.
\end{equation}
Moreover, the kernel of this operator is spanned by the geometric invariances of the equation, namely translations and constant phase shifts.

If we consider this linearized operator in the two-dimensional setting, things are different. As we will show below, when
\begin{equation}
\label{def:d-k}
d = d_k := \sqrt{2} \pi k,
\end{equation}
the restriction of this operator to the Fourier sector of order $k$ in the transverse variable $y$ has a nontrivial kernel (excluding the invariances of the equation), which is spanned by the function
\begin{equation}
\label{def:chi-k}
\chi_k(x, y) = i \, \chi_0(x) \, \cos \Big( \frac{\pi k y}{d} \Big).
\end{equation}
We point out that the soliton $S_0$ is odd with respect to the variable $x$, whereas the eigenfunction $\chi_0$ is even, so that $S_0$ and $\chi_k$ have the symmetry in~\eqref{eq:sym}. For any $k \geq 1$, this suggests the possibility of a branch of stationary solutions bifurcating from $S_0$ as the parameter $d$ varies and hits the value $d_k$. This is what we are going to prove.

The Hamiltonian framework corresponding to~\eqref{eq:GL} is the Ginzburg-Landau energy
$$
E(\psi) := \frac{1}{2} \int_\R \int_{(0, d)} |\nabla \psi|^2 + \frac{1}{4} \int_\R \int_{(0, d)} \big( 1 - |\psi|^2 \big)^2,
$$
which we will always assume to be finite in sequel.
It is then natural to introduce the function space
$$
\boX = \Big\{ \psi : \R \times (0, d) \to \C \text{ s.t. } \psi \text{ satisfies~\eqref{eq:sym}}, \nabla \psi \in L^2(\R \times (0, d)) \text{ and } 1 - |\psi|^2 \in L^2(\R \times (0, d)) \Big\}.
$$
For $\sigma > 0$, we also let
$$
L_\sigma^\infty := \Big\{ \psi : \R \times (0, d) \to \C \text{ s.t. } (x, y) \mapsto \psi(x, y) \, e^{\sigma |x|} \in L^\infty(\R \times (0, d)) \Big\},
$$
and we denote by $W_\sigma^{2, \infty}$ the space of bounded functions (not necessarily decaying exponentially in the $x$ variable), whose derivatives and second derivatives belong to $L_\sigma^\infty$. With these definitions at hand, we can state our main result.

\begin{theorem}
\label{thm:main}
For $k \in \N^*$, let $d_k = \sqrt{2} \pi k$. Then, there exist positive numbers $\ell_k$ and $\sigma_k$, and a smooth branch $d \mapsto \Psi_{k, d}$ from $(d_k, d_k + \ell_k)$ to $\boX \cap W_{\sigma_k}^{2, \infty}$ such that the functions $\Psi_{k, d}$ are solutions to~\eqref{eq:GL}, with Neumann boundary conditions~\eqref{eq:cond-Neumann}.

For $|d - d_k| < \ell_k$, the functions $S_0$, $\Psi_{k, d}$ and $\overline\Psi_{k, d}$ are the only solutions in a neighbourhood of $S_0$ in $\boX \cap W_{\sigma_k}^{2,\infty}$. In particular, the black soliton $S_0$ is an isolated solution for $d_k - \ell_k < d < d_k$.

As $d \searrow d_k$, the following expansion holds in $W^{2, \infty}(\R \times (0, d))$:
$$
\Psi_{k, d}(x, y) = S_0(x) + i \, \Lambda \, \sqrt{\frac{d -d_k}{d_k}} \, \chi_0(x) \, \cos \Big( \frac{\pi k y}{d} \Big) + \boO \Big( d - d_k \Big),
$$
for a universal positive number $\Lambda$. In particular there exists a universal positive number $\boE$ such that the energy of $\Psi_{k, d}$ can be expanded as:
\begin{equation}
\label{eq:DL-E}
E(\Psi_{k, d}) = E(S_0) - \frac{(d - d_k)^2}{d_k} \, \boE + \boO \Big( (d - d_k)^\frac{5}{2} \Big).
\end{equation}
\end{theorem}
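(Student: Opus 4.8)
The plan is to carry out a Lyapunov–Schmidt reduction around the black soliton $S_0$, treating $d$ as a bifurcation parameter and organizing everything through the transverse Fourier decomposition. First I would substitute $\Psi = S_0 + u$ into~\eqref{eq:GL} and rewrite the equation as $L u = N(u)$, where $L$ is the linearized Ginzburg-Landau operator around $S_0$ acting on the strip $\R \times (0,d)$ with Neumann conditions, and $N(u)$ collects the quadratic and cubic terms in $u$. Because $L$ has constant coefficients in $y$, it decouples over the Fourier modes $\cos(\pi j y / d)$: on the $j$-th sector it acts as the one-dimensional operator $\mathcal L_0 - (\pi j/d)^2$, where $\mathcal L_0$ is the linearization of the one-variable equation. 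The key spectral input from the excerpt is that $\mathcal L_0$ (restricted to the appropriate real/imaginary parts dictated by the symmetry~\eqref{eq:sym}) has a one-dimensional negative eigenspace spanned by $\chi_0$ with eigenvalue $-1/2$, plus a kernel generated by the invariances; hence on the $j$-th transverse sector the operator $\mathcal L_0 - (\pi j/d)^2$ is invertible \emph{except} precisely when $(\pi j/d)^2 = 1/2$, i.e.\ $d = d_j = \sqrt 2 \pi j$, where a new one-dimensional kernel appears, spanned by $\chi_k$ as in~\eqref{def:chi-k}. For $d$ near $d_k$ I would fix $k$, isolate the $k$-th sector as the "slow" direction, and show that all other sectors (together with the directions in the $k$-th sector orthogonal to $\chi_k$, and modulo the invariances) form a subspace on which $L$ is an isomorphism, with operator norm of the inverse bounded uniformly for $d$ in a neighbourhood of $d_k$. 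This requires the weighted spaces $L^\infty_\sigma$, $W^{2,\infty}_\sigma$: one shows $\mathcal L_0$ has a spectral gap away from $0$ and $-1/2$ on exponentially weighted spaces (standard, since the potential converges exponentially to its limit $2$ at $\pm\infty$ and the essential spectrum of $\mathcal L_0$ on such weighted spaces is shifted into the right half-plane), so that $\sigma_k$ can be chosen to make $L$ Fredholm of index zero there.

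Second, I would set up the fixed-point argument for the orthogonal part. Write $u = t\,\chi_k + w$ with $w$ in the complement $Z$ of $\mathrm{span}(\chi_k)$ and of the invariance directions, and let $P$ be the associated bounded projection onto the range of $L|_Z$. The reduced equation $P(Lw - N(t\chi_k + w)) = 0$ is solved by a contraction mapping: since $N$ is at least quadratic and maps $W^{2,\infty}_{\sigma_k}$ (or the relevant weighted space) into itself with local Lipschitz constants controlled by the norm of its argument, and since $(L|_Z)^{-1}P$ is uniformly bounded, for $|t|$ and $|d - d_k|$ small there is a unique small solution $w = w(t, d)$, depending smoothly on $(t,d)$, with $\|w(t,d)\| = \boO(t^2 + |d - d_k|\,|t| + \ldots)$. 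Here one must be a little careful to track the invariances: the symmetry condition~\eqref{eq:sym} already kills the phase-shift direction and forces $\chi_k$-type profiles, and the $x$-translation direction is handled either by imposing an orthogonality (normalization) condition on $w$ or by noting it does not interfere with the $k$-th sector; I would state this cleanly as part of the definition of $Z$.

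Third comes the bifurcation equation itself. Projecting $Lw - N(t\chi_k + w(t,d)) = 0$ onto the kernel direction $\chi_k$ gives a scalar equation $g(t, d) = 0$. One factors out $t$ (the solution $t = 0$ is $S_0$ itself), obtaining $\tilde g(t,d) = 0$ with $\tilde g(0, d_k) = 0$. The linearization $\partial_d \tilde g(0, d_k)$ is, up to a positive constant, the derivative of the eigenvalue $(\pi k / d)^2 - 1/2$ in $d$ at $d_k$, hence nonzero; and the leading $t$-behaviour comes from the cubic term $-\langle |\chi_k|^2 \chi_k, \chi_k\rangle_{L^2}$ type expression, which after integrating in $y$ and using the explicit $\chi_0 = 1/\cosh(x/\sqrt2)$ and $S_0 = \tanh(x/\sqrt2)$ produces a strictly positive universal constant (so that the relevant Fredholm alternative solvability holds and the sign of the bifurcation is fixed). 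An application of the implicit function theorem to $\tilde g$ then yields a smooth curve $t \mapsto d(t)$, or equivalently $d \mapsto t(d)$, with $t(d)^2 \sim \Lambda^2 (d - d_k)/d_k$ for a universal $\Lambda > 0$; inverting and substituting back $w(t(d), d)$ gives the branch $\Psi_{k,d} = S_0 + t(d)\chi_k + w$, the expansion stated in the theorem, and uniqueness in a neighbourhood (the only small solutions are $t = 0$, i.e.\ $S_0$, and the two signs $\pm t(d)$, i.e.\ $\Psi_{k,d}$ and $\overline\Psi_{k,d}$). Finally, the energy expansion~\eqref{eq:DL-E} follows by inserting the expansion of $\Psi_{k,d}$ into $E$: since $S_0$ is a critical point of $E$, the first-order term vanishes, the second-order term is $\tfrac12\langle L(t\chi_k), t\chi_k\rangle$ which equals $\tfrac12 t^2\big((\pi k/d)^2 - 1/2\big)\|\chi_k\|^2 < 0$ for $d > d_k$, contributing $-(d-d_k)^2 \boE / d_k$ with $\boE > 0$ after using $t^2 \sim \Lambda^2(d-d_k)/d_k$ and expanding $(\pi k/d)^2 - 1/2$ to first order, and the remainder is controlled by $\|w\|$ and higher powers of $t$, giving the $\boO((d-d_k)^{5/2})$ error.

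The main obstacle I expect is \emph{the uniform invertibility of $L$ on the complement $Z$ in the weighted spaces}, including a clean treatment of the invariances (phase and translation) and of the interaction between all infinitely many transverse Fourier sectors: one needs the resolvent bound on $\mathcal L_0 - (\pi j /d)^2$ to be summable/uniform over $j \neq k$ and over $d$ near $d_k$, which is where the exponential weight $\sigma_k$ and the precise location of the essential spectrum of $\mathcal L_0$ on $L^\infty_{\sigma_k}$ must be used carefully. The nonlinear fixed-point step and the scalar bifurcation analysis are then comparatively routine, modulo the explicit computation of the universal constant $\Lambda$ (and hence $\boE$) via integrals of $\cosh^{-n}(x/\sqrt2)$.
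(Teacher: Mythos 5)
Your overall strategy --- transverse Fourier decomposition, inversion of the linearization away from the resonant direction $\chi_k$, a contraction for the orthogonal part, and a scalar bifurcation equation resolved after factoring out the trivial branch --- is the same as the paper's. But two steps of your plan, as written, would not go through.

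First, the zero Fourier sector cannot be absorbed into the linear inversion on the complement $Z$. In that sector the relevant operator is $L_0^-(v)=-v''-(1-S_0^2)v$ acting on even imaginary parts; its essential spectrum is $[0,+\infty)$, so there is no spectral gap above $0$ and no bounded inverse, and passing to $L_\sigma^\infty$ does not rescue this because the true zero-mode perturbation does not decay: the solution $\psi_0$ of the first equation in~\eqref{eq:sys-0-w} generically acquires limits $e^{\pm i\theta}$ at $\pm\infty$ with $\theta\neq 0$, a degree of freedom of the form $e^{i\theta\,\mathrm{sign}(x)}$ which is compatible with the symmetry~\eqref{eq:sym} and hence not removed by it. This is precisely why the paper peels off the zero mode and solves its fully \emph{nonlinear} equation by constrained minimization with the coercivity estimate of Lemma~\ref{lem:coer-S0} (Lemmas~\ref{lem:sol-psi0}--\ref{lem:control-exp-V-N}), introduces the asymptotic phase as a modulation parameter (the profiles $S_\theta$ in the proof of Proposition~\ref{prop:exist-proj-sol}), and inverts the linearized operator only on the sectors $j\geq 1$ (Lemma~\ref{lem:invert-T-k}, where membership in $H_k$ forces the zero mode to vanish). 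Your parenthetical remark that the invariances are handled "as part of the definition of $Z$" hides the hardest part of the construction. (A smaller slip: the operator on the $j$-th sector is $L_0+\pi^2j^2/d^2$, not $L_0-(\pi j/d)^2$, although the resonance condition you state is correct.)

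Second, the coefficient that decides the direction of bifurcation --- and hence that the branch exists for $d>d_k$ rather than $d<d_k$ --- is not just the quartic self-interaction $\int|\chi_k|^4$. The third $\lambda$-derivative of the reduced function also receives contributions from the second-order back-reactions $\partial_{\lambda\lambda}\Psi_0^*$ (on the zero mode) and $\partial_{\lambda\lambda}W^*$ (supported in the $2k$-th Fourier sector, see~\eqref{eqdllw}); these carry the opposite sign, and the paper needs the explicit computation of $\partial_{\lambda\lambda}\Psi_0^*$ together with the quantitative bound~\eqref{eq:estim-int-v} on the auxiliary function $v$ to conclude $\omega>0$ in Lemma~\ref{lem:diff-I}. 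The same issue affects your energy expansion: the quartic term in $t$ and the terms involving $w$ and $\partial_{\lambda\lambda}\Psi_0^*$ are all of order $(d-d_k)^2$, the \emph{same} order as the quadratic-form term you isolate, so they contribute to $\boE$ and its positivity again requires the bound on $v$; they are not part of the $\boO((d-d_k)^{5/2})$ remainder.
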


It turns out that the symmetries of the equation and the uniqueness statement in this theorem allow us to describe more precisely the dependence on $k$ and the vortex structure of $\Psi_{k, d}$.

For any function $\Psi : \R \times (0, d) \to \C$, we define the map $R \Psi$, which is the conjugate reflection with respect to the line $y=d/2$:
\begin{equation}
\label{def:R}
R \Psi(x, y) = \bar{\Psi}(x, d - y), \quad \forall (x, y) \in \R \times (0, d).
\end{equation}
If $\Psi$ is a solution to~\eqref{eq:GL}, with the Neumann boundary conditions in~\eqref{eq:cond-Neumann}, then so is the function $R \Psi$. If, moreover, we have $R \Psi = \Psi$, then we may extend $\Psi$ to a solution $\Upsilon$ of~\eqref{eq:GL} on $\R^2$ by setting, for any integer $j \in \Z$,
\begin{equation}
\label{def:tilde}
\Upsilon(x, y + j d) = (R^j \Psi)(x, y), \quad \forall (x, y) \in \R \times (0, d).
\end{equation}
Indeed, the symmetry condition given by the identity $R \Psi = \Psi$ together with the Neumann boundary conditions in~\eqref{eq:cond-Neumann} imply that the values of $\Upsilon$ and its derivatives match on the boundary of the strips $\R \times (j d, (j + 1) d)$.

We are then able to deduce from the above and Theorem~\ref{thm:main} the following

\begin{corollary}
\label{cor:main}
With the same notations as in Theorem~\ref{thm:main}, for $k = 1$, the function $\Psi_{1, d}$ has a single zero, which is a vortex of degree $- 1$ located at $(0, d/2)$. Moreover, $\Psi_{1, d}$ is symmetric in the sense that
$$
R \Psi_{1, d} = \Psi_{1, d},
$$
when $d \in (d_1, d_1 + \ell_1)$.

Similarly, for $d \in (d_k, d_k + \ell_k)$, where $d_k = k \, d_1$, the function $\Psi_{k, d}$ has exactly $k$ vortices, located on the line $x = 0$ at the ordinates
$$
y_j = \frac{d_k (2 j + 1)}{2 k},
$$
for $0 \leq j < k$, and with degree $(- 1)^{j + 1}$. Moreover, when $d \in (d_k, d_k + \min \{ k \ell_1, \ell_k \})$, the solution $\Psi_{k, d}$ is equal to $\Psi_{1, d/k}$ in $\R \times (0 ,d/k)$ and is the restriction of the function $\Upsilon_{1, d/k}$ to the strip $\R \times (0 ,d)$.
\end{corollary}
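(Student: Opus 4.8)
\emph{Overall strategy.} The whole statement follows from combining the uniqueness clause of Theorem~\ref{thm:main} with the invariances of~\eqref{eq:GL}. The first step is to establish, for $k=1$, the reflection symmetry $R\Psi_{1,d}=\Psi_{1,d}$, with $R$ as in~\eqref{def:R}. Since $R$ is an isometric involution of $\boX\cap W^{2,\infty}_{\sigma_1}$ preserving~\eqref{eq:GL}, the Neumann conditions~\eqref{eq:cond-Neumann} and the symmetry~\eqref{eq:sym}, and fixing the (real) soliton $S_0$, the function $R\Psi_{1,d}$ is again a solution lying in the same small neighbourhood of $S_0$, hence equal by uniqueness to $S_0$, $\Psi_{1,d}$ or $\overline{\Psi_{1,d}}$. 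Substituting the expansion of Theorem~\ref{thm:main} into $R\Psi_{1,d}(x,y)=\overline{\Psi_{1,d}(x,d-y)}$ and using $\cos(\pi(d-y)/d)=-\cos(\pi y/d)$, one sees that $R\Psi_{1,d}$ has the \emph{same} first-order correction $i\Lambda\sqrt{(d-d_1)/d_1}\,\chi_0(x)\cos(\pi y/d)$ as $\Psi_{1,d}$; this term has size $\sqrt{d-d_1}\gg d-d_1$, hence is nonzero, and appears with the opposite sign in $\overline{\Psi_{1,d}}$, while $\Psi_{1,d}\neq S_0$. Therefore $R\Psi_{1,d}=\Psi_{1,d}$.

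\emph{The zero for $k=1$.} By~\eqref{eq:sym} the function $\Psi_{1,d}$ is purely imaginary on $\{x=0\}$, and by $R\Psi_{1,d}=\Psi_{1,d}$ it is real on $\{y=d/2\}$, so $\Psi_{1,d}(0,d/2)=0$. To see this is the only zero I would localise with the expansion: $\mathrm{Re}\,\Psi_{1,d}=S_0(x)+\boO(d-d_1)$ confines the zeros to $|x|\lesssim d-d_1$, where $\chi_0(x)=1+\boO((d-d_1)^2)$ forces $|\cos(\pi y/d)|\lesssim\sqrt{d-d_1}$, i.e. $|y-d/2|\lesssim\sqrt{d-d_1}$; on this small box the differential of $\Psi_{1,d}$ at $(0,d/2)$ is, to leading order, $\mathrm{diag}\big(1/\sqrt2,\,-\pi\Lambda\sqrt{(d-d_1)/d_1}/d\big)$, hence invertible, so a degree/inverse-function argument gives exactly one zero there, of local degree equal to the sign of that determinant, namely $-1$.

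\emph{From $k=1$ to general $k$.} Fix $d\in(d_k,d_k+\min\{k\ell_1,\ell_k\})$ and set $d'=d/k\in(d_1,d_1+\ell_1)$. Since $R\Psi_{1,d'}=\Psi_{1,d'}$, formula~\eqref{def:tilde} yields a solution $\Upsilon_{1,d'}$ of~\eqref{eq:GL} on $\R^2$. I would then check that $\Upsilon_{1,d'}|_{\R\times(0,d)}$ still satisfies~\eqref{eq:sym} and the Neumann conditions on $\R\times\{0,d\}$ (at $y=0$ inherited from $\Psi_{1,d'}$, at $y=d=kd'$ from the reflection structure of $\Upsilon_{1,d'}$), lies in $\boX\cap W^{2,\infty}_{\sigma_k}$ (its energy is $k$ times that of $\Psi_{1,d'}$ and its derivatives are reflected copies of those of $\Psi_{1,d'}$, so any $\sigma_k\le\sigma_1$ works), and is close to $S_0$ as $d\to d_k$. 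By uniqueness it is $S_0$, $\Psi_{k,d}$ or $\overline{\Psi_{k,d}}$; it is not $S_0$, since on $\R\times(0,d')$ it equals $\Psi_{1,d'}\neq S_0$; and comparing first-order corrections — using $d'-d_1=(d-d_k)/k$, $kd_1=d_k$ and $\cos(\pi y/d')=\cos(\pi ky/d)$ — it matches $\Psi_{k,d}$ rather than $\overline{\Psi_{k,d}}$. Hence $\Psi_{k,d}=\Upsilon_{1,d/k}|_{\R\times(0,d)}$, so $\Psi_{k,d}=\Psi_{1,d/k}$ on $\R\times(0,d/k)$, and the zeros of $\Psi_{k,d}$ are those of $\Upsilon_{1,d/k}$ in $\R\times(0,d)$: the unique zero $(0,d/(2k))$ of $\Psi_{1,d/k}$ of degree $-1$, together with its images under $j$ successive reflections across the horizontal lines $y=d/k,2d/k,\dots$, located at $(0,y_j)$ with $y_j=(2j+1)d/(2k)$ for $0\le j<k$; since each reflection reverses orientation, the $j$-th vortex has degree $(-1)(-1)^j=(-1)^{j+1}$. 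On the whole interval $(d_k,d_k+\ell_k)$ the count, the degrees and the positions of the vortices follow alternatively from the same localisation/degree argument applied to the expansion of $\Psi_{k,d}$ (the zeros clustering near the $k$ simultaneous zeros $(0,(2j+1)d/(2k))$ of $S_0$ and $\cos(\pi ky/d)$, where the differential is to leading order $\mathrm{diag}\big(1/\sqrt2,\,-(-1)^j\pi k\Lambda\sqrt{(d-d_k)/d_k}/d\big)$, of determinant with sign $(-1)^{j+1}$), together with the symmetries to pin the exact values.

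\emph{Main difficulty.} The symmetry identities and the Jacobian/degree computations are routine. The two delicate points are (i) controlling the radius of the neighbourhood of $S_0$ in $\boX\cap W^{2,\infty}_{\sigma_k}$ on which the uniqueness clause applies, and the $k$-dependence of $\ell_k$ and $\sigma_k$, so that the extended function $\Upsilon_{1,d/k}|_{\R\times(0,d)}$ genuinely falls inside it; and (ii) promoting the approximate location of the zeros to the exact values $y_j$, which requires knowing first that the zero is unique in each cell (the localisation/degree step) and only then invoking the exact symmetries — $\mathrm{Re}\,\Psi_{k,d}\equiv0$ on $\{x=0\}$ and the reflection symmetry built into $\Upsilon_{1,d/k}$ — to fix it.
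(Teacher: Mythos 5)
Your proof is correct and follows essentially the same route as the paper: the uniqueness trichotomy $\{S_0, \Psi_{k,d}, \overline{\Psi_{k,d}}\}$ applied first to $R\Psi_{1,d}$ and then to $\Upsilon_{1,d/k}$, combined with the uniform expansion of Theorem~\ref{thm:main} and the inverse function theorem to localise and count the zeroes. The only (harmless) difference is the order of the deductions: you resolve the trichotomy for $R\Psi_{1,d}$ by matching the sign of the $\boO(\sqrt{d-d_1})$ correction and then read off the zero $(0,d/2)$ directly from the two symmetries, whereas the paper first pins down the unique zero and its degree and uses these to identify $R\Psi_{1,d}$ with $\Psi_{1,d}$; likewise you distinguish $\Upsilon_{1,d/k}$ from $\overline{\Psi_{k,d}}$ via the expansion rather than via the vortex degrees. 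Note finally that your ordinates $y_j=(2j+1)d/(2k)$ are the ones actually produced by the reflection construction (and by the first part of the paper's own proof), so the $d_k$ appearing in the stated formula for $y_j$ should be read as $d$.
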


\begin{remarks}
\begin{enumerate}
\item The solutions $\Psi_{k, d}$ have constant limits in the $x$ direction, and their differences with their respective limits decay exponentially. This behaviour is specific to the strip geometry, since in $\R^2$, the difference between a travelling wave and its limit at infinity decays algebraically~\cite{Graveja5, Graveja6}. This is a consequence of the fact that the strip geometry is closer in behaviour to $\R$ rather than $\R^2$, as should be expected.

\item The bifurcation profile and the vortex structure of bifurcating branches are fully consistent with the numerical simulations and experiments described above, especially in~\cite{BranRei1,Kominea1}, where the soliton destabilizes and turns into a solution with $k$ vortices, the number $k$ depending on the width $d$. The critical width for $k$ vortices is indeed roughly proportional to $k$ in~\cite{BranRei1}.

\item A subtlety of our results is that, while $\Upsilon_{1, d/k}$ exists for any $d \in (d_k, d_k + k \ell_1)$, our fixed point argument does not yield the uniqueness of the branch in this interval. Thus we have not proved that $\ell_k = k \ell_1$, even though we strongly suspect this is true.

\item It is known that if $d$ is small enough, then $S_0$ is the unique non constant stationary solution, up to the invariances of the equation~\cite{AftaSan1}. We conjecture that this uniqueness holds up to $d_1$ among solutions with Neumann boundary conditions and symmetries. This is substantiated by the above bifurcation analysis, which shows a local uniqueness result.

\item The bifurcation analysis implies that the bifurcating branch is more stable, at least at the linear level, than $S_0$. The linearized operator around $S_0$ has $k + 1$ negative eigenvalues for $d$ slightly larger than $d_k$, whereas the stationary solutions $\Psi_{k, d}$ are expected to have only $k$ negative eigenvalues. Note also that the energy of $\Psi_{k, d}$ is strictly less that of the soliton $S_0$. In particular this suggests that the bifurcating branch at $d_1$, which has a single vortex, is the most stable one in a sense to specify.

\item Our analysis should hold if we replace the natural boundary conditions by a harmonic trap as in some physical experiments. This would change the critical values of the width. Indeed, the sine and cosine functions have to be replaced in this setting by Hermite functions, which changes the spectrum and critical widths.

\item Our analysis should be applicable in higher dimensions, where vortex rings are observed in spaces of the type $\R \times (0, d_1) \times \ldots \times (0, d_{N - 1})$ for $N \geq 3$, with similar possible changes in the critical values for the widths $d_1$, $\ldots$, $d_{N - 1}$.

\item A mathematical question is to investigate the evolution of the bifurcating branches when the width $d$ increases. When the domain is $\R^2$, no solutions of~\eqref{eq:GL} with $k > 1$ vortex of alternate degrees $\pm 1$ are known to exist. This does not contradict our result since, as $d\to +\infty$, even if one could prove that the branches of solutions persist, we expect the distance between vortices to tend to $+\infty$.

\item In contrast we expect the bifurcating branch with a single vortex to exist for any width $d > d_1$, and to converge to the vortex solution of degree $- 1$ for the two-dimensional stationary Gross-Pitaevskii equation (see~\cite{Mirones1} and the references therein).
\end{enumerate}
\end{remarks}

%%%%%%%%%%%%%%%%%%
%%%%%%%%%%%%%%%%%%
%%%%%%%%%%%%%%%%%%
\section{Sketch of the proofs}
\label{sec:sketch}
%%%%%%%%%%%%%%%%%%
%%%%%%%%%%%%%%%%%%
%%%%%%%%%%%%%%%%%%

In this section we describe the main elements in the proofs of Theorem~\ref{thm:main} and Corollary~\ref{cor:main}, and then complete these proofs.

%%%%%%%%%%%%%%%%%%%%%%%%%%%%%%%%%
%%%%%%%%%%%%%%%%%%%%%%%%%%%%%%%%%
\subsection{Construction of the solutions $\Psi_{k, d}$}
%%%%%%%%%%%%%%%%%%%%%%%%%%%%%%%%%
%%%%%%%%%%%%%%%%%%%%%%%%%%%%%%%%%

It is classical to solve boundary value problems in the strip $\R \times (0, d)$ with Neumann boundary conditions by relying on a decomposition into Fourier series of the possible solutions. There indeed exists a one-to-one correspondence between smooth functions $\psi : \R \times (0, d) \to \C$ satisfying the Neumann boundary conditions in~\eqref{eq:cond-Neumann}, and the functions, which are $2 d$-periodic and even in the $y$ variable. This correspondence is obtained by extending first the function $\psi$ to the strip $\Omega_d := \R \times (- d, d)$ by reflection with respect to the $x$ axis, and then to $\R^2$ by $2 d$-periodicity with respect to the $y$ variable.

In the sequel, we take advantage of this correspondence by working with functions now defined in $\Omega_d$. More precisely, we consider the natural energy space in this context
$$
X(\Omega_d) := \big\{ \psi \in H_\text{loc}^1(\Omega_d, \C) \text{ s.t. } \nabla \psi \in L^2(\Omega_d) \text{ and } 1 - |\psi|^2 \in L^2(\Omega_d) \big\},
$$
and we look for solutions to~\eqref{eq:GL} (in $\Omega_d$) in the closed subset with the relevant symmetries, that is
\begin{equation}
\label{def:X-sym}
\boX(\Omega_d) = \big\{ \psi \in X(\Omega_d) \text{ s.t. } \psi(x, - y) = \psi(x, y) = - \overline{\psi(- x, y)} \text{ for any } (x, y) \in \Omega_d \big\}.
\end{equation}
Here as in the sequel, we use the calligraphic notation $\boF(\Omega_d)$ in order to denote the subset of a function space $F(\Omega_d)$ formed by the functions satisfying the symmetries in~\eqref{def:X-sym}.

One advantage to work in this setting lies in the possibility to decompose a function $\psi \in \boX(\Omega_d)$ in a Fourier series with respect to the $y$-variable. More precisely, we set
\begin{equation}
\label{eq:psik}
\psi_0(x) := \frac{1}{d} \int_0^d \psi(x, y) \, dy, \hbox{ and }
\psi_k(x) := \frac{2}{d} \int_0^d \psi(x, y) \, \cos \Big( \frac{\pi k y }{d} \Big)\, dy,
\end{equation}
so that the function $\psi$ can be written as
$$
\psi(x, y) = \sum_{k = 0}^{+ \infty} \psi_k(x) \, \cos \Big( \frac{\pi k y }{d} \Big),
$$
for any $(x, y) \in \Omega_d$.

In the sequel, we often use the decomposition $\psi = \psi_0 + w$ of a function $\psi \in \boX(\Omega_d)$. By the Poincar\'e-Wirtinger inequality, the function $w$ in this further decomposition belongs to the Sobolev space $H^1(\Omega_d)$, while the function $\psi_0$ is in the one-variable energy space
$$
X(\R) := \big\{ \psi \in H_\text{loc}^1(\R, \C) \text{ s.t. } \psi' \in L^2(\R) \text{ and } 1 - |\psi|^2 \in L^2(\R) \big\},
$$
or more precisely in its subset $\boX(\R)$ corresponding to functions with odd real part and even imaginary part. Note here again that, given a function space $F(\R)$, we always use in the sequel the calligraphic notation $\boF(\R)$ to denote the subset of functions with odd real part and even imaginary part.

When a function $\psi = \psi_0 + w$ is a solution to~\eqref{eq:GL}, the functions $\psi_0$ and $w$ solve the system
\begin{equation}
\label{eq:sys-0-w}
\begin{cases}
- \psi_0'' - \psi_0 \big( 1 - |\psi_0|^2 \big) = - f_0, \\
- \Delta w - w \big( 1 - |\psi_0|^2 \big) + 2 \, \langle \psi_0, w \rangle_\C \, \psi_0 = - 2 \, \langle \psi_0, w \rangle_\C \, w - |w|^2 (\psi_0 + w) + f_0,
\end{cases}
\end{equation}
where we have set
\begin{equation}
\label{def:f-0}
f_0(x) := \frac{1}{d} \int_0^d \Big( 2 \, \langle \psi_0(x), w(x, y) \rangle_\C \, w(x, y) + |w(x, y)|^2 (\psi_0(x) + w(x, y)) \Big) \, dy,
\end{equation}
for any $x \in \R$. A natural strategy to construct solutions is therefore to invert first the left-hand side of the two equations in~\eqref{eq:sys-0-w}, at least in the neighbourhood of the soliton $S_0$, and then to handle the nonlinear terms in the right-hand side by a fixed point argument.

This strategy is complicated on the one hand by the fact that the left-hand side of the first equation in~\eqref{eq:sys-0-w} is nonlinear. We will by-pass this difficulty by implementing a variational argument in order to construct a solution $\psi_0$ for any suitable right-hand side $- f_0$, and then rely on coercivity estimates in order to establish the uniqueness of this solution.

On the other hand, it turns out that the linear operator
\begin{equation}
\label{def:boT}
T(w) = - \Delta w - w (1 - |\psi_0|^2) + 2 \langle \psi_0, w \rangle_\C \, \psi_0,
\end{equation}
in the left-hand side of the second equation in~\eqref{eq:sys-0-w} is not always invertible. We rely on this property to construct our bifurcating branches of solutions $\Psi_{k, d}$. More precisely, we first observe that since the function $\psi_0$ is independent of the variable $y$, we can decompose the operator $T$ in the previous Fourier sectors as
\begin{equation}
\label{eq:dec-boT}
T(w)(x, y) = \sum_{k = 1}^{+ \infty} T_k(\psi_k)(x) \, \cos \Big( \frac{\pi k y }{d} \Big),
\end{equation}
where
$$
T_k(\psi_k) := - \psi_k'' + \frac{\pi^2 k^2}{d^2} \psi_k - \psi_k (1 - |\psi_0|^2) + 2 \langle \psi_0, \psi_k \rangle_\C \, \psi_0,
$$
and put the focus on the one-variable operators $T_k$ in this identity.

In order to analyze $T_k$, we rely on our previous assumption that the function $\psi_0$ is a small perturbation of the black soliton $S_0$. Hence the main properties of $T_k$ can be inferred from the ones of the operator
$$
L_0(\psi) := - \psi'' - \psi (1 - S_0^2) + 2 \langle S_0, \psi \rangle_\C \, S_0 = L_0^+(u) + i L_0^-(v),
$$
where we have set $\psi := u + i v$, as well as
$$
L_0^+(u) := - u'' - u \big( 1 - 3 S_0^2 \big), \quad \text{ and } \quad L_0^-(v) := - v'' - v \big( 1 - S_0^2 \big).
$$
In the previous decomposition, the Sturm-Liouville operators $L_0^+$ and $L_0^-$ are self-adjoint on $L^2(\R)$, with domain $H^2(\R)$. As a consequence of the Weyl criterion, their essential spectrum is equal to $[2, + \infty)$, respectively $[0, + \infty)$. Moreover, the function $S_0'$ is in the kernel of the operator $L_0^+$. Since this function does not vanish, it follows from classical Sturm-Liouville theory that $0$ is the lowest eigenvalue of $L_0^+$ and that the kernel of this operator is spanned by the function $S_0'$. Given any integer $k \geq 1$, the operator $L_0^+ + \pi^2 k^2/d^2$ is therefore positive definite, and as a consequence, invertible from $H^2(\R)$ to $L^2(\R)$.

Similarly the function $S_0$ belongs to the kernel of the operator $L_0^-$. Since this function has a unique zero, this operator has a unique negative eigenvalue $- \lambda_0$. A direct computation shows that $\lambda_0 = 1/2$ and that the function $\chi_0$ in~\eqref{eq:chi-0} is a corresponding eigenfunction of the operator $L_0^-$. As a consequence, the invertibility properties of the operator $L_0^- + \pi^2 k^2/d^2$ depend on the precise values of the integer $k \geq 1$ and on the width $d$. When $d \neq d_k$, the operator $L_0^- + \pi^2 k^2/d^2$ is invertible from $H^2(\R)$ to $L^2(\R)$. On the contrary, when $d = d_k$, the kernel of this operator is spanned by the function $\chi_0$. This self-adjoint operator is invertible only when restricted to the orthogonal of its kernel.

This analysis is the starting point for developing a bifurcation argument. Indeed, when the function $\psi_0$ is close enough to the black soliton $S_0$, the operators $T_k$ have invertibility properties similar to the ones of the operators $L_0^- + \pi^2 k^2/d^2$, so that a bifurcation onset is expected for $d = d_k$.

Now that our strategy to obtain bifurcating branches is clarified, we enter in more details dealing first with the invertibility of the first equation in~\eqref{eq:sys-0-w}. Solving this equation is complicated by the fact that the functional framework corresponding to the set $X(\R)$ is quite involved. Since we eventually look for perturbations of the black soliton $S_0$, we can benefit from the analysis in~\cite{GravSme1} to prove the orbital and asymptotic stabilities of $S_0$ for the one-dimensional time-dependent Gross-Pitaevskii equation. In this direction, we introduce the weighted Sobolev norm
\begin{equation}
\label{def:H0}
 \big\| \psi \big\|_{H(\R)}^2 := \int_\R \Big( |\psi'|^2 + (1 - S_0^2) \, |\psi|^2 \Big),
\end{equation}
and we endow the set $X(\R)$ with the complete metric structure associated to the distance $d$ given by
\begin{equation}
\label{def:d0}
d \big( \psi_2, \psi_1 \big)^2 := \big\| \psi_2 - \psi_1 \big\|_{H(\R)}^2 + \big\| 1 - |\psi_2|^2 - (1 - |\psi_1|^2) \big\|_{L^2(\R)}^2.
\end{equation}
In~\cite[Proposition 1]{GravSme1}, this functional setting was used to establish a coercivity estimate for the one-dimensional Ginzburg-Landau energy $\goE$ in the neighbourhood of the black soliton $S_0$. In the sequel, we rely on a similar coercivity estimate for defining properly the local minimization problem from which we solve the first equation in~\eqref{eq:sys-0-w}. In order to state precisely this alternative coercivity estimate, we introduce the subset
$$
\boX_0(\R) := \big\{ \psi \in \boX(\R) \text{ s.t. } \psi(0) = 0 \big\},$$
We are then able to show

\begin{lemma}
\label{lem:coer-S0}
There exists a number $\Lambda_0 > 0$ such that, given a function $\psi \in \boX_0(\R)$, we have
\begin{equation}
\label{eq:coer-S0}
\goE(\psi) - \goE(S_0) \geq \Lambda_0 \Big( \| \varepsilon \|_{H(\R)}^2 + \| \eta \|_{L^2(\R)}^2 \Big) - \frac{1}{\Lambda_0} \big\| \varepsilon \big\|_{H(\R)}^3,
\end{equation}
where $\varepsilon := \psi - S_0$ and $\eta := 1 - |\psi|^2 - (1 - S_0^2) = - 2 \langle S_0, \varepsilon \rangle_\C - |\varepsilon|^2$.
\end{lemma}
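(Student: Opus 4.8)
The plan is to expand the energy $\goE(\psi)$ around $S_0$ and show that the quadratic part controls the error, with the cubic and higher-order terms absorbed into the remainder. Write $\psi = S_0 + \varepsilon$ with $\varepsilon = u + iv$ (where $u$ has even real part... more precisely $\varepsilon \in \boX_0(\R)$ so $\varepsilon(0) = 0$, $u$ odd, $v$ even). Recall that $\goE(\psi) = \frac12 \int_\R |\psi'|^2 + \frac14 \int_\R (1 - |\psi|^2)^2$. Expanding $|\psi'|^2 = |S_0'|^2 + 2\langle S_0', \varepsilon'\rangle_\C + |\varepsilon'|^2$ and using the equation $-S_0'' = S_0(1 - S_0^2)$ to integrate by parts the cross term, and expanding $1 - |\psi|^2 = (1 - S_0^2) + \eta$ with $\eta = -2\langle S_0, \varepsilon\rangle_\C - |\varepsilon|^2$, one arrives after cancelling the linear terms (which vanish because $S_0$ is a critical point) at
\begin{equation*}
\goE(\psi) - \goE(S_0) = \frac12 \int_\R |\varepsilon'|^2 - \frac12 \int_\R (1 - S_0^2)|\varepsilon|^2 + \int_\R \langle S_0, \varepsilon\rangle_\C^2 + \frac14 \int_\R \eta^2 + \text{(cubic in }\varepsilon\text{)}.
\end{equation*}
The quadratic form here is exactly $\frac12\langle L_0^+ u, u\rangle + \frac12\langle L_0^- v, v\rangle$ plus the $\frac14\int \eta^2$ term once one separates real and imaginary parts, since $\langle S_0, \varepsilon\rangle_\C = S_0 u$.

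The heart of the matter is the coercivity of this quadratic form on $\boX_0(\R)$. The operator $L_0^+$ is nonnegative with kernel spanned by $S_0'$; since $u$ is odd and $S_0'$ is even, $u \perp S_0'$, so $\langle L_0^+ u, u\rangle \gtrsim \|u\|_{H(\R)}^2$ by the spectral gap — this needs the standard argument that a nonnegative Sturm–Liouville operator with a spectral gap above $0$ is coercive on the orthogonal complement of its kernel, adapted to the $H(\R)$ norm rather than $H^1$. For the imaginary part, $L_0^-$ has the single negative eigenvalue $-1/2$ with eigenfunction $\chi_0$, and kernel spanned by $S_0$. Here the constraint $\psi(0) = 0$, i.e. $S_0(0) + u(0) + iv(0) = 0$, gives $v(0) = 0$ (since $S_0(0) = 0$ and we can take $u(0) = 0$ too from oddness — actually $u(0)=0$ automatically). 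The condition $v(0) = 0$ is what compensates for the negative direction: the point evaluation $v \mapsto v(0)$ restricted to the negative eigenspace is nondegenerate because $\chi_0(0) = 1 \neq 0$. This is precisely the mechanism behind the coercivity in \cite[Proposition 1]{GravSme1}, and I would either cite that result directly or reprove the needed inequality: on $\{v : v(0) = 0\}$ one has $\langle L_0^- v, v\rangle + \frac12(\text{something}) \gtrsim \|v\|_{H(\R)}^2$, and the extra term $\frac14\int\eta^2 \geq \frac14\|\eta\|_{L^2}^2$ combined with $\eta \approx -2S_0 v$ on the relevant subspace closes the estimate to give control of both $\|v\|_{H(\R)}^2$ and $\|\eta\|_{L^2(\R)}^2$.

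For the remainder, the cubic term in $\varepsilon$ coming from $\frac14\int(1-|\psi|^2)^2$ is a sum of terms like $\int (1-S_0^2)\langle S_0,\varepsilon\rangle_\C|\varepsilon|^2$, $\int \langle S_0, \varepsilon\rangle_\C |\varepsilon|^2$, and $\int |\varepsilon|^4$; each is bounded by $C\|\varepsilon\|_{L^\infty}\|\varepsilon\|_{L^2}^2 \leq C\|\varepsilon\|_{H(\R)}^3$ using the Sobolev embedding $H(\R) \hookrightarrow L^\infty \cap L^2$ (valid because $1-S_0^2$ decays exponentially, so the $H(\R)$ norm controls $\|\varepsilon'\|_{L^2}$ and, via Poincaré-type arguments near $0$ together with the weight away from $0$, the full $H^1$ norm — this embedding is implicitly what makes the $H(\R)$ structure work and is surely established alongside it). Collecting: $\goE(\psi) - \goE(S_0) \geq c(\|\varepsilon\|_{H(\R)}^2 + \|\eta\|_{L^2}^2) - C\|\varepsilon\|_{H(\R)}^3$, which is the claimed inequality with $\Lambda_0 = \min(c, 1/C)$ after relabeling.

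The main obstacle is the coercivity of the imaginary-part quadratic form under the single scalar constraint $v(0) = 0$: one must show that killing the value at the origin is exactly enough to beat the one negative eigenvalue, which requires a careful spectral argument (e.g. writing $v = a\chi_0 + v^\perp$ with $v^\perp \perp \chi_0, S_0$, using $v(0) = 0 \Rightarrow a = -v^\perp(0)$, and bounding $|v^\perp(0)| \lesssim \|v^\perp\|_{H(\R)}$ with a constant small enough that $-\frac12 a^2 + (\text{gap})\|v^\perp\|^2 + \frac14\|\eta\|^2$ stays positive) — this is the delicate point and is where the reference \cite[Proposition 1]{GravSme1} does the real work; the rest is bookkeeping.
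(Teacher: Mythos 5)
Your overall architecture --- second-order expansion of $\goE$ about $S_0$, coercivity of the quadratic form under the parity and $\psi(0)=0$ constraints, a cubic remainder --- is the same as the paper's, and you correctly locate the crux in the positivity of the second variation in the imaginary direction despite the negative eigenvalue of $L_0^-$. But as written the argument has genuine gaps, and most of them stem from treating $\varepsilon$ as if it lived in $H^1(\R)$: for $\psi\in\boX_0(\R)$ the limits of $\psi$ at $\pm\infty$ need only have modulus one, so $\varepsilon=\psi-S_0$ is in general \emph{not} in $L^2(\R)$, and $\|\varepsilon\|_{H(\R)}$ controls neither $\|\varepsilon\|_{L^2}$ nor $\|\varepsilon\|_{L^\infty}$. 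Concretely: (i) your expansion lists both $\int\langle S_0,\varepsilon\rangle_\C^2$ and $\frac14\int\eta^2$, but the first is exactly the quadratic part of the second, so the identity double-counts it; worse, $\int\langle S_0,\varepsilon\rangle_\C^2=\int S_0^2\varepsilon_1^2$ is typically $+\infty$, so splitting $\frac14\int\eta^2$ into ``quadratic plus cubic'' is an $\infty-\infty$ decomposition. The correct exact identity (the paper's \eqref{eq:decomp-E}) is $\goE(\psi)-\goE(S_0)=Q_0(\varepsilon_1)+Q_0(\varepsilon_2)+\frac14\int_\R\eta^2$ with the \emph{same} form $Q_0(f)=\frac12\int((f')^2-(1-S_0^2)f^2)$ for both components and $\frac14\int\eta^2$ kept intact; the $L_0^\pm$ quadratic forms you use are not even finite on this energy space. (ii) Your cubic bound $C\|\varepsilon\|_{L^\infty}\|\varepsilon\|_{L^2}^2\le C\|\varepsilon\|_{H(\R)}^3$ relies on an embedding $\goH(\R)\hookrightarrow L^\infty\cap L^2$ that is false (e.g.\ $f(x)=\log(1+|x|)$ belongs to $\goH_0(\R)$ but to neither space); the paper instead integrates by parts against $(1-S_0^2)S_0=\frac1{\sqrt2}(S_0^2)'$ and applies the Sobolev embedding to the weighted function $(1-S_0^2)^{1/2}\varepsilon$. (iii) The $L^2$ spectral gap of $L_0^+$ above its kernel $S_0'$, combined with parity, is the right argument for decaying perturbations but does not transfer here: in $\goH_0(\R)$ the degenerate direction of the relevant form $Q_0$ is $S_0$ itself, not $S_0'$, and its contribution has to be recovered from the $\eta^2$ term --- which is precisely where the cubic error in \eqref{eq:coer-S0} originates.

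Finally, the step you flag as delicate --- that the single constraint $v(0)=0$ beats the negative eigenvalue of $L_0^-$ --- is the whole content of the lemma, and your sketched route (write $v=a\chi_0+v^\perp$, eliminate $a$, check constants) does not close by itself: non-degeneracy of the point evaluation on the negative eigenspace is necessary but not sufficient for a form with one negative direction to become non-negative on a codimension-one constraint; a quantitative sign condition would have to be verified. The paper proves the needed positivity directly, with no spectral bookkeeping: for $f\in\goH_0(\R)$ of class $\boC^1$ (so $f(0)=0$ and $S_0'f/S_0$ extends continuously), integrating by parts and using $S_0''=-(1-S_0^2)S_0$ gives the factorization
$$
Q_0(f)=\int_\R\Big(f'-\frac{S_0'}{S_0}\,f\Big)^2\ \ge\ 0,
$$
which shows at once that $Q_0\ge0$ on $\goH_0(\R)$ with kernel $\R S_0$; parity then kills the $S_0$-component of $\varepsilon_2$, and the $\eta^2$ term absorbs that of $\varepsilon_1$ at the cost of the cubic error. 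Without this identity or an equivalent substitute, deferring the point to \cite{GravSme1} is deferring the proof itself.
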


With Lemma~\ref{lem:coer-S0} in hand, we can describe our strategy to solve the first equation in~\eqref{eq:sys-0-w}. This strategy is first variational. Fix a function $f_0 \in L^1(\R, \C) \cap L^2(\R, \C)$, and consider the functional
$$
I_{f_0}(\psi) = \goE(\psi) + \int_\R \langle f_0, \psi \rangle_\C.
$$
Since
\begin{equation}
\label{eq:estim_f_0}
\big| \langle f_0, \psi \rangle_\C \big| \leq |f_0| + \big| 1 - |\psi|^2 \big| \, |f_0|,
\end{equation}
the functional $I_{f_0}$ is well-defined in $\boX_0(\R)$. Given a number $\alpha > 0$, we can introduce the subsets
$$
\boU(\alpha) := \big\{ \psi \in \boX_0(\R) \text{ s.t. } d(\psi, S_0) < \alpha \big\},
$$
and consider the local minimization problems
\begin{equation}
\label{def:I-alpha}
\boI_{f_0}(\alpha) := \inf_{\psi \in \boU(\alpha)} \, I_{f_0}(\psi).
\end{equation}

\begin{lemma}
\label{lem:sol-psi0}
There exist two numbers $\alpha_0 := \Lambda_0^2/2$, and $\beta_0 > 0$, depending only on $\Lambda_0$, such that, for $0 \leq \beta \leq \beta_0$ and any continuous function $f_0 \in \boL^1(\R, \C) \cap \boL^2(\R, \C)$ satisfying
\begin{equation}
\label{eq:cond-f0}
\| f_0 \|_{L^1(\R)} + \| f_0 \|_{L^2(\R)} \leq \beta,
\end{equation}
the minimization problem $\boI_{f_0}(\alpha_0)$ given by~\eqref{def:I-alpha} has a minimizer $\psi_0 \in \boU(\alpha_0)$. This minimizer is of class $\boC^2$ in $\R$, and it satisfies
\begin{equation}
\label{eq:psi-0}
- \psi_0'' - \psi_0 (1 - |\psi_0|^2) + f_0 = 0.
\end{equation}
Moreover, there exists a positive number $C_0$, depending only on $\Lambda_0$, such that
\begin{equation}
\label{eq:dist-S0}
d \big( \psi_0, S_0 \big)^2 \leq C_0 \beta.
\end{equation}
\end{lemma}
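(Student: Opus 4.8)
The plan is to run the direct method of the calculus of variations on the local minimization problem $\boI_{f_0}(\alpha_0)$ and then to use the coercivity estimate of Lemma~\ref{lem:coer-S0} to show that any minimizer is interior, hence a critical point, and satisfies the Euler--Lagrange equation~\eqref{eq:psi-0}. First I would check that the functional $I_{f_0}$ is bounded below on $\boU(\alpha_0)$ and weakly lower semicontinuous with respect to a suitable topology: writing $\psi = S_0 + \varepsilon$ and $\eta = 1 - |\psi|^2 - (1-S_0^2)$, the energy difference $\goE(\psi) - \goE(S_0)$ controls $\|\varepsilon\|_{H(\R)}^2 + \|\eta\|_{L^2(\R)}^2$ from below by Lemma~\ref{lem:coer-S0} (up to the cubic remainder, absorbed because $\|\varepsilon\|_{H(\R)} \leq d(\psi,S_0) < \alpha_0$ is small, which is exactly where the choice $\alpha_0 = \Lambda_0^2/2$ enters), while the linear perturbation is estimated via~\eqref{eq:estim_f_0} and~\eqref{eq:cond-f0} by $\beta(1 + \|\eta\|_{L^2(\R)} + \text{l.o.t.})$, which a Young inequality splits against the quadratic terms. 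This yields, on $\boU(\alpha_0)$,
\begin{equation}
\label{eq:prop-lb}
I_{f_0}(\psi) - \goE(S_0) \geq \tfrac{\Lambda_0}{2}\Big( \|\varepsilon\|_{H(\R)}^2 + \|\eta\|_{L^2(\R)}^2 \Big) - C \beta^2,
\end{equation}
for a constant $C$ depending only on $\Lambda_0$; in particular $\boI_{f_0}(\alpha_0) \geq \goE(S_0) - C\beta^2 > -\infty$, and $\boI_{f_0}(\alpha_0) \leq I_{f_0}(S_0) = \goE(S_0) + \int_\R \langle f_0, S_0\rangle_\C \leq \goE(S_0) + \beta$.

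Next I would take a minimizing sequence $(\psi_n) = (S_0 + \varepsilon_n)$ in $\boU(\alpha_0)$. From~\eqref{eq:prop-lb} and the upper bound on $\boI_{f_0}(\alpha_0)$, the quantities $\|\varepsilon_n\|_{H(\R)}$ and $\|\eta_n\|_{L^2(\R)}$ are bounded; since $(1-S_0^2)$ is bounded below on compact sets and $\varepsilon_n' $ is bounded in $L^2$, a diagonal argument gives a subsequence with $\varepsilon_n \rightharpoonup \varepsilon_\infty$ weakly in $H^1_{\mathrm{loc}}$ and $\varepsilon_n \to \varepsilon_\infty$ strongly in $L^2_{\mathrm{loc}}$, hence pointwise a.e.; the symmetry constraints in $\boX_0(\R)$ and the condition $\psi(0)=0$ pass to the limit. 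Weak lower semicontinuity of $\int |\varepsilon'|^2$ and of $\int(1-S_0^2)|\varepsilon|^2$, together with Fatou applied to $\int \eta^2$ after extracting the a.e.\ limit of $\eta_n$, shows $\psi_\infty := S_0 + \varepsilon_\infty \in \boX_0(\R)$ with $d(\psi_\infty, S_0)^2 \leq \liminf d(\psi_n,S_0)^2 \leq \alpha_0^2$, and $I_{f_0}(\psi_\infty) \leq \liminf I_{f_0}(\psi_n) = \boI_{f_0}(\alpha_0)$; the linear term $\int_\R \langle f_0,\psi_n\rangle_\C \to \int_\R \langle f_0,\psi_\infty\rangle_\C$ converges because $f_0 \in L^1 \cap L^2$ and $\psi_n$ converges locally with a uniform tail control coming from $\eta_n$ bounded in $L^2$. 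Thus $\psi_0 := \psi_\infty$ is a minimizer. The bound~\eqref{eq:dist-S0} follows by feeding $\psi = \psi_0$ back into~\eqref{eq:prop-lb}: $\tfrac{\Lambda_0}{2} d(\psi_0,S_0)^2 \leq \tfrac{\Lambda_0}{2}(\|\varepsilon\|_{H(\R)}^2 + \|\eta\|_{L^2(\R)}^2) + O(\|\varepsilon\|^3) \leq I_{f_0}(\psi_0) - \goE(S_0) + C\beta^2 \leq \beta + C\beta^2$, so $d(\psi_0,S_0)^2 \leq C_0\beta$ after shrinking $\beta_0$.

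The crucial point — and the one I expect to be the main obstacle — is showing that the minimizer is \emph{interior}, i.e.\ $d(\psi_0, S_0) < \alpha_0$ strictly, so that $\psi_0$ is an unconstrained critical point and the first variation vanishes, yielding~\eqref{eq:psi-0}. This is precisely where the quantitative choice $\alpha_0 = \Lambda_0^2/2$ and the smallness $\beta \leq \beta_0$ are designed to work: from $d(\psi_0,S_0)^2 \leq C_0 \beta$ and $\alpha_0 = \Lambda_0^2/2$, choosing $\beta_0$ so that $C_0 \beta_0 < \alpha_0^2$ forces $d(\psi_0, S_0) < \alpha_0$. One must be slightly careful that the distance $d(\cdot, S_0)$ is genuinely the one generating the topology in which $\boU(\alpha_0)$ is open and for which the constraint is active only on the boundary sphere; this is guaranteed by the completeness of the metric structure~\eqref{def:d0} recalled in the excerpt. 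Once interiority is known, computing the Gâteaux derivative of $I_{f_0}$ in directions $\phi \in \boX_0(\R) \cap C_c^\infty$ gives the weak form of~\eqref{eq:psi-0}; elliptic regularity (the equation is a semilinear ODE with smooth, locally bounded coefficients once $\psi_0 \in L^\infty_{\mathrm{loc}}$, which follows from $\psi_0 \in H^1_{\mathrm{loc}} \hookrightarrow C^{1/2}_{\mathrm{loc}}$ in one dimension) bootstraps to $\psi_0 \in \boC^2(\R)$, and a standard density/cutoff argument removes the compact-support restriction on test functions using $\nabla\psi_0 \in L^2$ and $1-|\psi_0|^2 \in L^2$. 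This completes the proof.
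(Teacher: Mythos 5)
Your proposal is correct and follows essentially the same route as the paper: the direct method for the localized problem $\boI_{f_0}(\alpha_0)$, weak lower semicontinuity plus the tail estimate $|\langle f_0,\psi\rangle_\C|\leq |f_0|+|1-|\psi|^2|\,|f_0|$ for the linear term, and the coercivity of Lemma~\ref{lem:coer-S0} with $\alpha_0=\Lambda_0^2/2$ to show the constraint is inactive, so that the minimizer is an interior critical point satisfying~\eqref{eq:psi-0} and obeys $d(\psi_0,S_0)^2\leq C_0\beta$. The only slip is that the constant term in your intermediate lower bound should be $-C\beta$ rather than $-C\beta^2$ (the piece $\beta\,(1+\|1-S_0^2\|_{L^2(\R)})$ of the linear term cannot be absorbed by Young's inequality), but this is harmless since everything you deduce from that bound only requires an error of order $\beta$.
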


In view of Lemma~\ref{lem:sol-psi0}, the function $N_0 := 1 - |\psi_0|^2$ corresponding to the minimizer $\psi_0$ is also of class $\boC^2$ on $\R$, and we can check that it decays exponentially at $\pm \infty$ when the function $f_0$ also decays exponentially. In order to prove this claim, we first introduce the weighted Lebesgue spaces
$$
L_\sigma^p(\R) := \Big\{ \psi \in L_\text{loc}^p(\R, \C) \text{ s.t. } \big\| \psi \big\|_{L_\sigma^p(\R)} := \big\| \psi \, e^{\sigma |\cdot|} \big\|_{L^p(\R)} < + \infty \Big\},
$$
for any numbers $\sigma \in \R$ and $1 \leq p \leq \infty$. Note that these function spaces reduce to the classical Lebesgue spaces for $\sigma = 0$.

We next derive from~\eqref{eq:psi-0} the equation
\begin{equation}
\label{eq:eta-0-1}
- N_0'' + 2 N_0 = 2 \big( |\psi_0'|^2 + N_0^2 + \langle f_0, \psi_0 \rangle_\C \big).
\end{equation}
By taking the complex scalar product of~\eqref{eq:psi-0} with the derivative $\psi_0'$, we also obtain
$$
\Big( |\psi_0'|^2 - \frac{N_0^2}{2} \Big)' = 2 \langle f_0, \psi_0' \rangle_\C.
$$
Recall at this stage that any function in the energy space $X(\R)$ is bounded (see e.g.~\cite{Gerard2}). As a consequence, the function $N_0$ belongs to the Sobolev space $H^1(\R)$, and by~\eqref{eq:psi-0} again, so does the derivative $\psi_0'$. In particular, both functions tend to $0$ at $\pm \infty$, and we obtain the equation
\begin{equation}
\label{eq:deriv-eta}
|\psi_0'|^2 = \frac{N_0^2}{2} + 2 g_0,
\end{equation}
where we have set
\begin{equation}
\label{def:g-0}
g_0(x) = \int_{- \infty}^x \langle f_0(t), \psi_0'(t) \rangle_\C \, dt = - \int_x^{+ \infty} \langle f_0(t), \psi_0'(t) \rangle_\C \, dt.
\end{equation}
Going back to~\eqref{eq:eta-0-1}, we are led to the differential equation
\begin{equation}
\label{eq:eta-0}
- N_0'' + 2 N_0 - 3 N_0^2 = 2 \big( \langle f_0, \psi_0 \rangle_\C + 2 g_0 \big).
\end{equation}
Relying on this further equation, we can show the following estimate for
\begin{equation}
\label{eq:epseta}
\varepsilon_0 = \psi_0 - S_0, \quad \text{ and } \quad \eta_0 = \big( 1 - |\psi_0|^2 \big) - \big( 1 - |S_0|^2 \big) = |S_0|^2 - |\psi_0|^2.
\end{equation}

\begin{lemma}
\label{lem:exp-dec-eta-0}
Let $0 < \sigma < \sqrt{2}$. There exists a number $0 < \beta_\sigma \leq \beta_0$ such that, given a function $f_0 \in \boL_\sigma^\infty(\R)$ satisfying the condition in~\eqref{eq:cond-f0} for a number $0 \leq \beta \leq \beta_\sigma$, and the corresponding minimizer $\psi_0$ of the problem $\boI_{f_0}(\alpha_0)$, the functions $N_0$ and $\psi_0'$ belong to the weighted Lebesgue space $L_\sigma^\infty(\R)$. Moreover, there exists a number $C_\sigma > 0$, depending only on $\sigma$ and $\Lambda_0$, such that the functions $\eta_0$ and $\varepsilon_0$ defined above satisfy
\begin{equation}
\label{eq:exp-dec-0}
\big\| \eta_0 \big\|_{L_\sigma^\infty(\R)}^2 +\big\| \varepsilon_0 \big\|_{L^\infty(\R)}^2 + \big\| \varepsilon_0' \big\|_{L_\sigma^\infty(\R)}^2 \leq C_\sigma \Big( \beta + \big\| f_0 \big\|_{L_\sigma^\infty(\R)}^2 \Big) \Big( 1 + \beta + \big\| f_0 \big\|_{L_\sigma^\infty(\R)}^2 \Big).
\end{equation}
Note that $\varepsilon_0$ is bounded in $L^\infty(\R)$, {\em not} in $L^\infty_\sigma(\R)$.
\end{lemma}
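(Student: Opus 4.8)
The plan is an elliptic bootstrap in four stages: uniform $L^\infty$-bounds, qualitative exponential decay, the quantitative estimate for $\eta_0$, and finally the quantitative estimate for the profile $\varepsilon_0$, where the real work lies. For the first stage, since every function in $X(\R)$ is bounded, one has $N_0=1-|\psi_0|^2\in H^1(\R)$ with $\|N_0'\|_{L^2}=2\|\langle\psi_0,\psi_0'\rangle_\C\|_{L^2}\le 2\|\psi_0\|_{L^\infty}\|\psi_0'\|_{L^2}$, while $\psi_0\in\boU(\alpha_0)$ controls $\|N_0\|_{L^2}$ and $\|\psi_0'\|_{L^2}$ by fixed constants; combining $\|N_0\|_{L^\infty}^2\lesssim\|N_0\|_{L^2}\|N_0'\|_{L^2}\lesssim\|\psi_0\|_{L^\infty}$ with $\|\psi_0\|_{L^\infty}^2\le 1+\|N_0\|_{L^\infty}$ gives a polynomial inequality that forces a uniform bound $\|\psi_0\|_{L^\infty}+\|\psi_0'\|_{L^\infty}\le C_*$ (the second term through~\eqref{eq:deriv-eta} and~\eqref{def:g-0}), depending only on $\Lambda_0$.

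For the second stage I rewrite~\eqref{eq:eta-0} as $-N_0''+2N_0=3N_0^2+F$ with $F:=2\langle f_0,\psi_0\rangle_\C+4g_0$, and note from~\eqref{def:g-0} and Stage~1 that $g_0\in L_\sigma^\infty$ with $\|g_0\|_{L_\sigma^\infty}\lesssim\sigma^{-1}\|f_0\|_{L_\sigma^\infty}$, so $\|F\|_{L_\sigma^\infty}\lesssim\|f_0\|_{L_\sigma^\infty}$. Since $N_0\in H^1(\R)$ vanishes at $\pm\infty$, a tail-energy (Gronwall) estimate on $\int_x^{+\infty}(|N_0'|^2+N_0^2)$ yields an initial exponential decay rate for $N_0$, and the quadratic gain from $3N_0^2$ together with one application of the convolution kernel $\tfrac1{2\sqrt 2}e^{-\sqrt 2|\cdot|}$ of $(-\partial_{xx}+2)^{-1}$ — bounded on $L_\sigma^\infty$ precisely because $\sigma<\sqrt 2$ is smaller than the decay rate $\sqrt 2$ of $1-S_0^2$ — upgrade this to rate $\sigma$; thus $N_0\in L_\sigma^\infty$. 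Feeding this into~\eqref{eq:psi-0} gives $\psi_0''=-\psi_0 N_0+f_0\in L_\sigma^\infty$, and integrating from $\pm\infty$ (using $\psi_0'\in H^1(\R)$) gives $\psi_0'\in L_\sigma^\infty$, whence $\varepsilon_0'=\psi_0'-S_0'\in L_\sigma^\infty$ and, since $\varepsilon_0$ is the primitive of $\varepsilon_0'$ vanishing at $0$, $\varepsilon_0\in L^\infty(\R)$. The decay obtained here is purely qualitative: none of these quantities is yet known to be small.

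For the quantitative estimate on $\eta_0$, subtracting the identity $-(1-S_0^2)''+2(1-S_0^2)-3(1-S_0^2)^2=0$ satisfied by the soliton from~\eqref{eq:eta-0} gives $-\eta_0''+2\eta_0=6(1-S_0^2)\eta_0+3\eta_0^2+F$. By~\eqref{eq:dist-S0} one has $\|\eta_0\|_{L^2}\le(C_0\beta)^{1/2}$, and expanding $\eta_0'=-2(\langle S_0,\varepsilon_0'\rangle_\C+\langle\varepsilon_0,S_0'\rangle_\C+\langle\varepsilon_0,\varepsilon_0'\rangle_\C)$ and using $|S_0'|\le 2^{-1/2}(1-S_0^2)^{1/2}$ to absorb the non-decaying factor $S_0$ into the $H(\R)$-norm yields $\|\eta_0'\|_{L^2}\lesssim(1+\|\varepsilon_0\|_{L^\infty})\beta^{1/2}$; hence $\|\eta_0\|_{L^\infty}\lesssim\|\eta_0\|_{L^2}^{1/2}\|\eta_0'\|_{L^2}^{1/2}\lesssim\beta^{1/2}$ by the one-dimensional Agmon inequality. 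Applying the kernel of $(-\partial_{xx}+2)^{-1}$ to the displayed equation, one absorbs $3\eta_0^2$ and $6(1-S_0^2)\eta_0$ (both of coefficient $\lesssim\|\eta_0\|_{L^\infty}$, small once $\beta_\sigma$ is small, since $\|(1-S_0^2)\eta_0\|_{L_\sigma^\infty}\le\|1-S_0^2\|_{L_\sigma^\infty}\|\eta_0\|_{L^\infty}$) to get $\|\eta_0\|_{L_\sigma^\infty}\lesssim\|\eta_0\|_{L^\infty}+\|f_0\|_{L_\sigma^\infty}$, i.e. the $\eta_0$-part of~\eqref{eq:exp-dec-0}. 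This also makes $\|N_0\|_{L_\sigma^\infty}$, hence (Stage~2) $\|\psi_0'\|_{L_\sigma^\infty}$, uniformly bounded, and then a Sobolev estimate on a fixed large interval together with $\int_{|x|>R}|\psi_0'|\le\sigma^{-1}\|\psi_0'\|_{L_\sigma^\infty}e^{-\sigma R}$ shows $\|\varepsilon_0\|_{L^\infty}\to 0$ as $\beta\to 0$ — a smallness we shall need in the last stage.

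The delicate stage is the profile itself. Writing $\varepsilon_0=u+iv$ with $u$ odd and $v$ even, \eqref{eq:psi-0} becomes $L_0^+(u)=\operatorname{Re}G$ and $L_0^-(v)=\operatorname{Im}G$ with $G=-f_0-S_0|\varepsilon_0|^2-2S_0 u\,\varepsilon_0-|\varepsilon_0|^2\varepsilon_0$, whose nonlinear part is quadratic, of size $\lesssim\|f_0\|_{L_\sigma^\infty}+\|\varepsilon_0\|_{L^\infty}^2(1+\|\varepsilon_0\|_{L^\infty})$. The \textbf{main obstacle} is that the $O(1)$ potential $1-S_0^2$ inside $L_0^\pm$ cannot be treated perturbatively — integrating $-\varepsilon_0''=\cdots$ twice costs a factor $\sigma^{-2}\ge 1/2$, which an $O(1)$ potential defeats — so $L_0^\pm$ must be inverted exactly on exponentially weighted spaces, and again $\sigma<\sqrt 2$ is exactly the condition making the resulting integral kernels integrable against $e^{\sigma|\cdot|}$. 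For the real part, $\ker L_0^+=\R S_0'$ is spanned by an \emph{even} function, so $L_0^+$ is boundedly invertible on odd functions and $\|u\|_{L^\infty}\lesssim\|\operatorname{Re}G\|_{L^\infty}$; note that $\operatorname{Re}G$, hence $u$, does \emph{not} decay (because $S_0|\varepsilon_0|^2$ does not) — this is exactly why $\varepsilon_0\notin L_\sigma^\infty$. For the even part, the threshold resonance $S_0\in\ker L_0^-$ is \emph{odd}, hence harmless on the even sector, but the even linearly growing solution $w_e$ of $L_0^- w=0$ leaves a one-dimensional cokernel in $L_\sigma^\infty$ corresponding to the non-decaying phase $c:=\lim_{\pm\infty}v$. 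One computes that $\operatorname{Im}G\to 0$ at $\pm\infty$ (the non-decaying parts of $-2S_0 u v$ and $-|\varepsilon_0|^2 v$ cancel) and in fact $\operatorname{Im}G\in L_\sigma^\infty$; testing $L_0^-(v-c)=\operatorname{Im}G+c(1-S_0^2)$ against $w_e$ — the boundary term at the origin vanishing because $(v-c)'(0)=0$ by evenness — both pins $|c|\lesssim\|\operatorname{Im}G\|_{L_\sigma^\infty}$ and, through the bounded partial inverse of $L_0^-$ on the even sector (the would-be polynomial factor from the growth of $w_e$ cancelling), gives $\|v-c\|_{W_\sigma^{2,\infty}}\lesssim\|\operatorname{Im}G\|_{L_\sigma^\infty}$. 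Collecting the bounds on $u$, $u'$ (the latter using that the resolvent kernel of $L_0^+$ decays at rate $\sqrt 2>\sigma$), $v$ and $v'$ yields $\|\varepsilon_0\|_{L^\infty}+\|\varepsilon_0'\|_{L_\sigma^\infty}\lesssim\|f_0\|_{L_\sigma^\infty}+\|\eta_0\|_{L_\sigma^\infty}+\|\varepsilon_0\|_{L^\infty}^2$; since Stage~3 makes $\|\varepsilon_0\|_{L^\infty}$ small enough to absorb the quadratic term and $\|\eta_0\|_{L_\sigma^\infty}\lesssim\beta^{1/2}+\|f_0\|_{L_\sigma^\infty}$, squaring yields the remaining terms of~\eqref{eq:exp-dec-0}.
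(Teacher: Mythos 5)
Your Stages 1--3 essentially coincide with the paper's Steps 1--3: the same Agmon-type interpolation gives the uniform bounds $\|\psi_0\|_{L^\infty}+\|\psi_0'\|_{L^\infty}\leq M_0$ and $\|\eta_0\|_{L^\infty}\lesssim\beta^{1/2}$, and the quantitative decay of $\eta_0$ is obtained from~\eqref{eq:N-0} by inverting $-\partial_{xx}+2$ on $L_\sigma^\infty(\R)$ and absorbing $3\eta_0^2$ and $6(1-S_0^2)\eta_0$ using the smallness of $\|\eta_0\|_{L^\infty}$ (the paper splits this into the ranges $\sigma<\sqrt2/2$ and $\sqrt2/2\leq\sigma<\sqrt2$ by first working with $\eta_0^2$, but your single-pass version is fine). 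The genuine divergence is Stage 4, and there your diagnosis of the ``main obstacle'' is mistaken: the term $(1-S_0^2)\varepsilon_0$ in~\eqref{eq:eps-0} does \emph{not} have to be inverted against -- it is already a small source in $L_\sigma^\infty(\R)$, because $\varepsilon_0(0)=0$ and $\|\varepsilon_0'\|_{L^\infty}\lesssim\beta^{1/2}$ give $|\varepsilon_0(x)|\leq|x|\,\|\varepsilon_0'\|_{L^\infty}$ while $1-S_0^2$ decays at the rate $\sqrt2>\sigma$, so that $\|(1-S_0^2)\varepsilon_0\|_{L_\sigma^\infty}\lesssim\beta^{1/2}/(\sqrt2-\sigma)$. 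The paper therefore simply reads $\varepsilon_0''=-(1-S_0^2)\varepsilon_0-(S_0+\varepsilon_0)\eta_0+f_0$ off the equation, concludes $\varepsilon_0''\in L_\sigma^\infty(\R)$ from Stage~3, and integrates once from $\pm\infty$ to bound $\varepsilon_0'$ in $L_\sigma^\infty(\R)$ and once from $0$ to bound $\varepsilon_0$ in $L^\infty(\R)$ -- no spectral analysis of $L_0^\pm$ at all. Your alternative, inverting $L_0^+$ on the odd sector and $L_0^-$ on the even sector of weighted spaces, can probably be pushed through (the constant $c$ is indeed pinned by pairing against the linearly growing solution $w_e$, whose nondegeneracy $\int_\R w_e(1-S_0^2)\neq0$ follows from the Wronskian of $(S_0,w_e)$, and the apparent polynomial loss in the Green's function does cancel because its kernel grows in $|x-y|$ rather than in $|y|$), but each of these points is a real verification you have only asserted, and the whole threshold-resonance apparatus is dispensable. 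The moral is that one only ever needs decay of $\varepsilon_0'$ and $\eta_0$, never of $\varepsilon_0$ itself, which is exactly why the direct bootstrap closes.
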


A difficulty with the variational construction in Lemma~\ref{lem:sol-psi0} lies in the property that the minimizer $\psi_0$ is not necessarily unique. Since this function is in $\boX_0(\R)$, we know that $\psi_0(0) = 0$, and also that $\text{Im}(\psi_0')(0) = 0$, but this is not enough to apply the Cauchy-Lipschitz theorem. In order to by-pass this difficulty, we rely on the coercivity estimate in Lemma~\ref{lem:coer-S0} to establish some Lipschitz dependence of the function $\psi_0$ on the function $f_0$. In the sequel, this property will also be useful to solve the complete system in~\eqref{eq:sys-0-w} by a fixed point argument.

In order to show this Lipschitz dependence, we first recall that the minimizer $\psi_0$ is a perturbation of the black soliton $S_0$ due to~\eqref{eq:dist-S0}. Define $\varepsilon_0$ and $\eta_0$ by~\eqref{eq:epseta}. Then we check that
\begin{equation}
\label{eq:eps-0}
- \varepsilon_0'' - (1 - S_0^2) \varepsilon_0 - (S_0 + \varepsilon_0) \eta_0 = - f_0.
\end{equation}
Similarly, we derive from~\eqref{eq:eta-0} that
\begin{equation}
\label{eq:N-0}
- \eta_0'' + 2 \eta_0 - 6 (1 - S_0^2) \eta_0 - 3 \eta_0^2 = 2 \langle f_0, \psi_0 \rangle_\C + 2 g_0.
\end{equation}
We then consider another function $f_1 \in \boL_\sigma^\infty(\R)$ satisfying the condition in~\eqref{eq:cond-f0} for the same number $0 \leq \beta \leq \beta_\sigma$, and we denote by $\psi_1 \in \boU(\alpha_0)$ a minimizer of the minimization problem $\boI_{f_1}(\alpha_0)$ given by Lemma~\ref{lem:sol-psi0}. Introducing the function $g_1$ given by~\eqref{def:g-0}, and setting as above $\varepsilon_1 := \psi_1 - S_0$ and $\eta_1 := N_1 - (1 - S_0^2)$, we first control the differences $\varepsilon_1 - \varepsilon_0$ and $\eta_1 - \eta_0$ in $H(\R)$, respectively in $L^2(\R)$.

\begin{lemma}
\label{lem:control-V-N}
Let $0 < \sigma < \sqrt{2}$. There exists $\gamma_\sigma$, $A_\sigma>0$ such that for any $\gamma < \gamma_\sigma$, if $f_0$, $f_1\in \boL_\sigma^\infty(\R)$ are such that
\begin{equation}
\label{eq:cond-exp-f0}
\big\| f_0 \big\|_{L_\sigma^\infty(\R)} + \big\| f_1 \big\|_{L_\sigma^\infty(\R)} \leq \gamma,
\end{equation}
then the differences $\psi_1 - \psi_0$ and $\eta_1 - \eta_0$ satisfy the estimate
\begin{equation}
\label{eq:V-N}
\| \psi_1 - \psi_0 \|_{H(\R)} + \| \eta_1 - \eta_0 \|_{L^2(\R)} \leq A_\sigma \| f_1 - f_0 \|_{L_\sigma^\infty(\R)}.
\end{equation}
\end{lemma}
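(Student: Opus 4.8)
The plan is to subtract the equations satisfied by $\psi_0$ and $\psi_1$ and to run a coupled energy estimate for the differences $V := \psi_1 - \psi_0$ and $M := \eta_1 - \eta_0 = (1 - |\psi_1|^2) - (1 - |\psi_0|^2)$, absorbing all nonlinear and cross contributions into the smallness of $\gamma$ together with the a priori bounds of Lemmas~\ref{lem:sol-psi0} and~\ref{lem:exp-dec-eta-0}. Subtracting~\eqref{eq:eps-0} written for $\psi_1$ and for $\psi_0$, and using $\varepsilon_1 = \varepsilon_0 + V$, $\eta_1 = \eta_0 + M$, gives
$$
- V'' - (1 - S_0^2) V - (S_0 + \varepsilon_1) M - \eta_0 V = - (f_1 - f_0),
$$
while subtracting~\eqref{eq:eta-0} written for $N_i := 1 - |\psi_i|^2$ — this is the form of the $\eta$–equation whose principal part is the invertible operator $- \partial_{xx} + 2$, rather than the one in~\eqref{eq:N-0} — gives
$$
- M'' + 2 M - 3 (N_0 + N_1) M = 2 \langle f_1 - f_0, \psi_1 \rangle_\C + 2 \langle f_0, V \rangle_\C + 4 (g_1 - g_0),
$$
with $g_1 - g_0 = \int_{- \infty}^\cdot \big( \langle f_1 - f_0, \psi_1' \rangle_\C + \langle f_0, V' \rangle_\C \big)$. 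I would also record the algebraic identity $M = - \langle \psi_0 + \psi_1, V \rangle_\C = - 2 \langle S_0, V \rangle_\C - \langle \varepsilon_0 + \varepsilon_1, V \rangle_\C$, which will be used to complete the quadratic form appearing in the estimate for $V$.

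Since $\psi_0(0) = \psi_1(0) = 0$ we have $V(0) = 0$, hence $|V(x)| \leq |x|^{1/2} \| V' \|_{L^2(\R)}$ and, for $0 < \sigma < \sqrt 2$,
$$
\int_\R e^{- \sigma |x|} |V|^2 \leq C_\sigma \| V \|_{H(\R)}^2, \qquad \int_\R e^{- \sigma |x|} |V| \leq C_\sigma \| V \|_{H(\R)}.
$$
Combined with the exponential decay of $f_0, f_1$, with the bounds of Lemma~\ref{lem:exp-dec-eta-0} on $\psi_1, \psi_1', V, V', \eta_0$, and with $\| h \|_{L^2(\R)} \leq \sigma^{- 1/2} \| h \|_{L_\sigma^\infty(\R)}$, this controls every source term; in particular $\| g_1 - g_0 \|_{L^2(\R)} + \| g_1 - g_0 \|_{L_\sigma^\infty(\R)} \leq C_\sigma ( \| f_1 - f_0 \|_{L_\sigma^\infty(\R)} + \gamma \| V \|_{H(\R)} )$ and $\big| \int_\R \langle f_1 - f_0, V \rangle_\C \big| \leq C_\sigma \| f_1 - f_0 \|_{L_\sigma^\infty(\R)} \| V \|_{H(\R)}$. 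Pairing the $M$–equation with $M$: the operator $- \partial_{xx} + 2 - 3 (N_0 + N_1)$ is coercive on $H^1(\R)$ because $\| N_0 \|_{L^\infty(\R)} + \| N_1 \|_{L^\infty(\R)}$ is as small as we please for $\gamma$ small (Lemma~\ref{lem:exp-dec-eta-0}), and bounding the right–hand side as above — with $\int_\R \langle f_0, V \rangle_\C\, M \leq \| f_0 \|_{L_\sigma^\infty(\R)} \big( \int_\R e^{- \sigma |x|} |V|^2 \big)^{1/2} \| M \|_{L^2(\R)} \leq C_\sigma \gamma \| V \|_{H(\R)} \| M \|_{L^2(\R)}$ — yields
$$
\| M \|_{H^1(\R)} \leq C_\sigma \big( \| f_1 - f_0 \|_{L_\sigma^\infty(\R)} + \gamma \| V \|_{H(\R)} \big).
$$

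For $V$, I would pair its equation with $V$ and integrate by parts; besides $\int_\R |V'|^2 - \int_\R (1 - S_0^2) |V|^2$ this produces the cross term $- \int_\R \langle (S_0 + \varepsilon_1) M, V \rangle_\C$, into which I insert $M = - \langle \psi_0 + \psi_1, V \rangle_\C$: the leading contribution $2 \int_\R \langle S_0, V \rangle_\C^2$ restores, up to terms carrying a factor $\varepsilon_0$ or $\varepsilon_1$ and therefore controlled (via the weighted inequalities above and Lemma~\ref{lem:exp-dec-eta-0}) by $C_\sigma \gamma \big( \| V \|_{H(\R)}^2 + \| M \|_{L^2(\R)} \| V \|_{H(\R)} \big)$, the coercivity of the quadratic form associated with the second variation of $\goE$ at $S_0$. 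This coercivity on the space of functions with odd real part, even imaginary part and vanishing at the origin is the quadratic content of the estimate~\eqref{eq:coer-S0} of Lemma~\ref{lem:coer-S0}, obtained as in~\cite{GravSme1}; together with $\int_\R |\eta_0| \, |V|^2 \leq C_\sigma \| \eta_0 \|_{L_\sigma^\infty(\R)} \| V \|_{H(\R)}^2 \leq C_\sigma \gamma \| V \|_{H(\R)}^2$ and the source estimate, it gives
$$
\| V \|_{H(\R)} \leq C_\sigma \big( \| f_1 - f_0 \|_{L_\sigma^\infty(\R)} + \gamma \| M \|_{L^2(\R)} \big).
$$
Inserting the $M$–bound into this inequality and choosing $\gamma_\sigma$ small enough to absorb the term $\gamma^2 \| V \|_{H(\R)}$ gives $\| V \|_{H(\R)} \leq A_\sigma \| f_1 - f_0 \|_{L_\sigma^\infty(\R)}$, and then the $M$–bound yields the same control of $\| \eta_1 - \eta_0 \|_{L^2(\R)}$, which is~\eqref{eq:V-N}.

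The hard part is the coercivity used in the estimate for $V$. Since $f_0$ and $f_1$ may carry different phases at $\pm \infty$, the function $V$ need not decay — only $V'$ and $M$ do, by Lemma~\ref{lem:exp-dec-eta-0} — so $\langle S_0, V \rangle_\C$ fails to be square integrable, the pieces into which the cross term $- \int_\R \langle (S_0 + \varepsilon_1) M, V \rangle_\C$ naturally decomposes are individually non-integrable, and $V$ is not a tangent vector to the metric space $X(\R)$ at $\psi_0$ (indeed $\goE$ is infinite on the segment $[\psi_0, \psi_1]$), so one cannot simply Taylor–expand $\goE$. This forces one to work throughout with the weighted norm $\| \cdot \|_{H(\R)}$ rather than $H^1(\R)$, to keep the cross term grouped, and to extract the coercive quadratic form only after using the identity $M = - \langle \psi_0 + \psi_1, V \rangle_\C$; it is precisely here that the shape of Lemma~\ref{lem:coer-S0} — coercivity of $\goE$ near $S_0$ despite the negative eigenvalue $- 1/2$ of $L_0^-$, restored by the single scalar constraint $\psi(0) = 0$ — is indispensable.
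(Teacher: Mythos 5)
Your overall architecture (subtract the equations, run an energy estimate for $V=\psi_1-\psi_0$, use $V(0)=0$ and the weighted inequalities to handle the source terms, absorb via smallness of $\gamma$) matches the paper's, and your diagnosis of the hard point — the non-integrability issues caused by the fact that $V$ need not decay — is exactly right. But two steps of your argument do not survive scrutiny, and the first one is fatal to the scheme as written.

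First, the coercivity you claim for the $M$-equation is false. You set $N_i:=1-|\psi_i|^2$ and assert that $-\partial_{xx}+2-3(N_0+N_1)$ is coercive on $H^1(\R)$ "because $\|N_0\|_{L^\infty}+\|N_1\|_{L^\infty}$ is as small as we please for $\gamma$ small". Lemma~\ref{lem:exp-dec-eta-0} makes $\eta_i=N_i-(1-S_0^2)$ small, not $N_i$ itself: one has $N_i\approx 1-S_0^2$, whose sup norm is $1$, so $3(N_0+N_1)\approx 6(1-S_0^2)$ and the relevant operator is $-\partial_{xx}+2-6(1-S_0^2)$. This operator is not positive: since $1-S_0^2$ solves $-u''+2u-3u^2=0$ (this is~\eqref{eq:eta-0} with $f_0=0$), testing the quadratic form against $u=1-S_0^2$ gives
\begin{equation*}
\int_\R \Big( (u')^2+2u^2-6(1-S_0^2)u^2 \Big)=\int_\R \big(-u''+2u-6u^2\big)\,u=-3\int_\R u^3<0,
\end{equation*}
and $u$ is even, so coercivity fails even on the symmetry class of $M$. (Explicitly, $-\partial_{xx}+2-6\,\mathrm{sech}^2(\cdot/\sqrt 2)$ is a P\"oschl--Teller operator with lowest eigenvalue $-5/2$ and a zero mode $S_0(1-S_0^2)$.) Consequently pairing the $M$-equation with $M$ does not yield $\|M\|_{H^1}\lesssim\|F\|+\gamma\|V\|_{H(\R)}$, and your two-equation bootstrap does not close.

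Second, in the $V$-estimate the "leading contribution $2\int_\R\langle S_0,V\rangle_\C^2$" that you propose to extract after inserting $M=-\langle\psi_0+\psi_1,V\rangle_\C$ is in general a divergent integral: $|V(x)|$ may grow like $|x|^{1/2}$, so $\int_\R S_0^2V_1^2=+\infty$ is possible. You flag precisely this danger in your closing paragraph ("the pieces ... are individually non-integrable ... keep the cross term grouped"), but the computation you actually describe splits the cross term anyway. The paper's proof resolves both problems at once with a single identity: multiplying the $V$-equation by $V$ and using $N=-2\langle S_0+U,V\rangle_\C$ turns the cross term into exactly $-\tfrac12\int N^2$, giving
\begin{equation*}
\int_\R \Big( |V'|^2-(1-S_0^2)|V|^2+\tfrac{N^2}{2} \Big)=\int_\R\Big(-\langle F,V\rangle_\C+M|V|^2\Big),
\end{equation*}
with $N=\eta_1-\eta_0\in L^2(\R)$ never split into pieces; the degenerate direction $\langle V_1,S_0\rangle_{\goH(\R)}$ of $Q_0$ is then recovered from $\tfrac12\int N^2\geq\tfrac12\int(1-S_0^2)N^2$ via a Cauchy--Schwarz inequality in the weighted space $L^2((1-S_0^2)\,dx)$, where every term is finite. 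This single inequality simultaneously controls $\|V\|_{H(\R)}$ and $\|N\|_{L^2}$, so no separate $N$-equation (and no coercivity of $-\partial_{xx}+2-6(1-S_0^2)$) is needed at this stage; the equation~\eqref{eq:N-0} for $N$ is only used later, in Lemma~\ref{lem:control-exp-V-N}, where the operator $-\partial_{xx}+2$ is applied after the $L^2$ control is already in hand. To repair your proof you should drop the $M$-equation entirely and rework the cross term along these lines.
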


In particular, under the assumptions of Lemma~\ref{lem:control-V-N}, there exists a unique minimizer $\psi_0\in \boU(\alpha_0)$ for the minimization problem $\boI_{f_0}(\alpha_0)$. Indeed, given any other possible minimizer $\psi_1$, the inequality~\eqref{eq:V-N} holds for $f_1 = f_0$, so that $\psi_1 = \psi_0$.

We next upgrade the previous control on the differences $\varepsilon_1' - \varepsilon_0'$ and $\eta_1 - \eta_0$ from $L^2(\R)$ to $L_\sigma^\infty(\R)$. More precisely, we establish the following Lipschitz dependence of the functions $\psi_0$ and $N_0$ on the function $f_0$.

\begin{lemma}
\label{lem:control-exp-V-N}
Let $0 < \sigma < \sqrt{2}$. Assume that two continuous functions $f_0$ and $f_1$ in $\boL_\sigma^\infty(\R)$ satisfy the condition in~\eqref{eq:cond-exp-f0} for a number $\gamma \leq \gamma_\sigma$. There exists a positive number $B_\sigma$, depending only on $\sigma$, such that the differences $\psi_1 - \psi_0$ and $\eta_1 - \eta_0$ satisfy the estimate
$$
\big\| \psi_1' - \psi_0' \big\|_{L_\sigma^\infty(\R)} + \big\| \psi_1 - \psi_0 \big\|_{L^\infty(\R)} + \big\| \eta_1 - \eta_0\big\|_{L_\sigma^\infty(\R)} \leq B_\sigma \big\| f_1 - f_0 \big\|_{L_\sigma^\infty(\R)}.
$$
\end{lemma}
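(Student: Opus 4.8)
The plan is to derive linear elliptic equations for the differences $\delta\varepsilon := \psi_1 - \psi_0$ and $\delta\eta := \eta_1 - \eta_0$ by subtracting~\eqref{eq:eps-0} and~\eqref{eq:N-0} for the indices $0$ and $1$, and then to bootstrap the $L^2$-type estimate of Lemma~\ref{lem:control-V-N} up to the weighted $L^\infty$ spaces. Subtracting~\eqref{eq:eps-0} and using $1 - S_0^2 + \eta_0 = N_0$, one obtains
\begin{equation*}
- \delta\varepsilon'' - N_0 \, \delta\varepsilon = \psi_1 \, \delta\eta - \delta f, \qquad \delta f := f_1 - f_0,
\end{equation*}
while subtracting~\eqref{eq:N-0} and expanding $\eta_1^2 - \eta_0^2$ and $\langle f_1, \psi_1 \rangle_\C - \langle f_0, \psi_0 \rangle_\C$ yields
\begin{equation*}
- \delta\eta'' + 2 \, \delta\eta = \big( 6 (1 - S_0^2) + 3 (\eta_0 + \eta_1) \big) \delta\eta + 2 \langle \delta f, \psi_1 \rangle_\C + 2 \langle f_0, \delta\varepsilon \rangle_\C + 2 \, \delta g,
\end{equation*}
with $\delta g(x) := g_1(x) - g_0(x) = \int_{-\infty}^x \big( \langle \delta f, \psi_1' \rangle_\C + \langle f_0, \delta\varepsilon' \rangle_\C \big)$. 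We record that $\delta\varepsilon(0) = 0$, that Lemma~\ref{lem:control-V-N} gives $\| \delta\varepsilon' \|_{L^2(\R)} + \| \delta\eta \|_{L^2(\R)} \leq A_\sigma \| \delta f \|_{L_\sigma^\infty(\R)}$, and that, after fixing some $\sigma' \in (\sigma, \sqrt2)$ and shrinking $\gamma_\sigma$, Lemmas~\ref{lem:sol-psi0} and~\ref{lem:exp-dec-eta-0} make the quantities $\| \psi_i \|_{L^\infty(\R)}$, $\| \psi_i' \|_{L^2(\R)}$ and $\| N_i \|_{L_{\sigma'}^\infty(\R)}$ bounded, and $\| \eta_i \|_{L_\sigma^\infty(\R)}$ small. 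Since all the functions involved are of class $\boC^2$, and $N_i, \eta_i \in L_\sigma^\infty(\R)$, the integrations below are licit.

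I first dispose of $\delta g$: writing $\delta g(x) = \int_{-\infty}^x$ for $x \leq 0$ and $\delta g(x) = - \int_x^{+\infty}$ for $x \geq 0$, the Cauchy–Schwarz inequality combined with $|\delta f(t)| \leq \| \delta f \|_{L_\sigma^\infty} e^{- \sigma |t|}$, $|f_0(t)| \leq \gamma e^{- \sigma |t|}$ and the $L^2$ bounds on $\psi_1'$ and $\delta\varepsilon'$ gives $\| \delta g \|_{L_\sigma^\infty(\R)} \leq C_\sigma \| \delta f \|_{L_\sigma^\infty(\R)}$. Next I treat the $\delta\eta$ equation by inverting $- \partial_{xx} + 2$ through its explicit Green kernel $G(x) = \tfrac{1}{2 \sqrt2} e^{- \sqrt2 |x|}$: since $\sigma < \sqrt2$, the pointwise bound $e^{\sigma |x|} G(x - y) e^{- \sigma |y|} \leq \tfrac{1}{2 \sqrt2} e^{- (\sqrt2 - \sigma) |x - y|}$ together with Young's inequality shows that convolution with $G$ is bounded from $L_\sigma^p(\R)$ to $L_\sigma^\infty(\R)$ for $p \in \{ 1, 2, \infty \}$. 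Bounding the right-hand side of the $\delta\eta$-equation term by term: the potentials $(1 - S_0^2) \delta\eta$ and $(\eta_0 + \eta_1) \delta\eta$ lie in $L_\sigma^2(\R)$ with norm $\lesssim \| \delta\eta \|_{L^2(\R)} \lesssim \| \delta f \|_{L_\sigma^\infty}$ (because $e^{\sigma |\cdot|}(1 - S_0^2)$ is bounded, as $\sigma < \sqrt2$, and $\eta_i \in L_\sigma^\infty$); the term $\langle \delta f, \psi_1 \rangle_\C$ is in $L_\sigma^\infty$ with norm $\lesssim \| \delta f \|_{L_\sigma^\infty}$; the term $\langle f_0, \delta\varepsilon \rangle_\C$ is in $L_\sigma^\infty$ with norm $\leq \gamma \| \delta\varepsilon \|_{L^\infty}$; and $\delta g$ was just handled. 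Hence
\begin{equation*}
\| \delta\eta \|_{L_\sigma^\infty(\R)} \leq C_\sigma \big( \| \delta f \|_{L_\sigma^\infty(\R)} + \gamma \, \| \delta\varepsilon \|_{L^\infty(\R)} \big).
\end{equation*}

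It remains to close the estimate for $\delta\varepsilon$. From $- \delta\varepsilon'' = N_0 \delta\varepsilon + \psi_1 \delta\eta - \delta f =: F$, one has $F \in L^1(\R)$, so $\delta\varepsilon'' \in L^1(\R)$ has bounded variation and, being in $L^2(\R)$, satisfies $\delta\varepsilon'(\pm\infty) = 0$; thus $\delta\varepsilon'(x) = \int_x^{+\infty} F$ and, since $\delta\varepsilon(0) = 0$, $\delta\varepsilon(x) = \int_0^x \delta\varepsilon'$, which yields $\| \delta\varepsilon' \|_{L_\sigma^\infty} \leq \sigma^{-1} \| F \|_{L_\sigma^\infty}$ and $\| \delta\varepsilon \|_{L^\infty} \leq \| \delta\varepsilon' \|_{L^1} \leq 2 \sigma^{-2} \| F \|_{L_\sigma^\infty}$. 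The delicate term in $F$ is $N_0 \delta\varepsilon$, whose coefficient $N_0 \sim 1 - S_0^2$ is of size $O(1)$ near the origin, so no naive perturbation bound is available. I split it as $N_0 \1_{|x| \leq R} \delta\varepsilon + N_0 \1_{|x| > R} \delta\varepsilon$: on $[- R, R]$ one has $1 - S_0^2 \geq c_R > 0$, so a local Sobolev embedding gives $\| \delta\varepsilon \|_{L^\infty(- R, R)} \leq C_R \| \delta\varepsilon \|_{H(\R)}$, whence $\| N_0 \1_{|x| \leq R} \delta\varepsilon \|_{L_\sigma^\infty} \leq C_{R, \sigma} \| \delta f \|_{L_\sigma^\infty}$ by Lemma~\ref{lem:control-V-N}; while for $|x| > R$ one uses $N_0 \in L_{\sigma'}^\infty$ with $\sigma' > \sigma$ to get $e^{\sigma |x|} |N_0(x)| \leq \| N_0 \|_{L_{\sigma'}^\infty} e^{- (\sigma' - \sigma) R}$, hence $\| N_0 \1_{|x| > R} \delta\varepsilon \|_{L_\sigma^\infty} \leq \kappa_R \| \delta\varepsilon \|_{L^\infty}$ with $\kappa_R \to 0$ as $R \to +\infty$. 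Together with the bound on $\delta\eta$ above, this gives
\begin{equation*}
\| \delta\varepsilon \|_{L^\infty(\R)} \leq \tfrac{2}{\sigma^2} \| F \|_{L_\sigma^\infty(\R)} \leq \tfrac{2}{\sigma^2} \Big( C(R, \sigma) \, \| \delta f \|_{L_\sigma^\infty(\R)} + \big( \kappa_R + C(\sigma) \gamma \big) \| \delta\varepsilon \|_{L^\infty(\R)} \Big),
\end{equation*}
and choosing $R$ large and then $\gamma_\sigma$ small enough absorbs the last term into the left-hand side, so $\| \delta\varepsilon \|_{L^\infty} \leq B_\sigma \| \delta f \|_{L_\sigma^\infty}$; feeding this back into the previous displays controls $\| \delta\varepsilon' \|_{L_\sigma^\infty}$ and $\| \delta\eta \|_{L_\sigma^\infty}$ as well, with $B_\sigma$ depending only on $\sigma$. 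The main obstacle is precisely this last paragraph: because the potential $N_0$ in the $\delta\varepsilon$-equation is $O(1)$ rather than small, the weighted-$L^\infty$ estimate cannot be closed by treating $N_0 \delta\varepsilon$ perturbatively, and one is forced to combine two mechanisms — the already available $H(\R)$-control of $\delta\varepsilon$ from Lemma~\ref{lem:control-V-N} handles the compactly supported bulk of the potential, whereas only its exponentially small tail (accessible thanks to the extra margin $\sigma' > \sigma$) participates in the absorption argument.
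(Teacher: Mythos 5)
Your overall strategy is sound and close in spirit to the paper's: you derive the same two differential equations for $\delta\varepsilon$ and $\delta\eta$ (the paper's~\eqref{eq:V} and~\eqref{eq:N}, up to a symmetric versus asymmetric splitting of the cross terms), invert $-\partial_{xx}+2$ by its Green kernel, integrate the $\delta\varepsilon$ equation twice using $\delta\varepsilon(0)=0$ and $\delta\varepsilon'(\pm\infty)=0$, and feed in the $H(\R)\times L^2$ control of Lemma~\ref{lem:control-V-N}. The organizational difference is that the paper first proves all estimates at an intermediate rate $\tau<\sigma$, deduces $\|\delta\varepsilon\|_{L^\infty}$, and only then upgrades to the rate $\sigma$, whereas you run a single absorption argument in which the unknown $\|\delta\varepsilon\|_{L^\infty}$ reappears on the right-hand side with a small prefactor $\kappa_R+C\gamma$. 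Your treatment of the $\delta\eta$ equation via the $L_\sigma^2\to L_\sigma^\infty$ boundedness of convolution with the Green kernel is also a mild simplification, as it bypasses the paper's preliminary estimate of $\|N'\|_{L^2}$ (its Step~1).

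There is, however, one step that fails as written: the claim that $\|N_i\|_{L_{\sigma'}^\infty(\R)}$ is bounded for some $\sigma'\in(\sigma,\sqrt2)$. Lemma~\ref{lem:exp-dec-eta-0} only places $N_0=(1-S_0^2)+\eta_0$ in $L_\sigma^\infty(\R)$ when $f_0\in\boL_\sigma^\infty(\R)$, and this is sharp: in~\eqref{eq:eta-0} the source term $2(\langle f_0,\psi_0\rangle_\C+2g_0)$ decays exactly like $e^{-\sigma|x|}$ for generic $f_0$, and since $\sigma<\sqrt2$ so does $\eta_0$. As your tail bound $e^{\sigma|x|}|N_0(x)|\le\|N_0\|_{L_{\sigma'}^\infty}e^{-(\sigma'-\sigma)R}$ for $|x|>R$ is precisely what makes $\kappa_R\to0$, the absorption does not close as stated. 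The repair is short and uses only material you already invoke: split $N_0\,\delta\varepsilon=(1-S_0^2)\,\delta\varepsilon+\eta_0\,\delta\varepsilon$. The first piece genuinely carries the margin $\sqrt2-\sigma>0$; in fact, as in the paper, the bound $|\delta\varepsilon(x)|\le|x|^{1/2}\|\delta\varepsilon'\|_{L^2}$ gives directly $\|(1-S_0^2)\,\delta\varepsilon\|_{L_\sigma^\infty}\le C_\sigma\|\delta\varepsilon\|_{H(\R)}\le C_\sigma A_\sigma\|\delta f\|_{L_\sigma^\infty}$, with no splitting in $R$ needed at all. The second piece satisfies $\|\eta_0\,\delta\varepsilon\|_{L_\sigma^\infty}\le\|\eta_0\|_{L_\sigma^\infty}\|\delta\varepsilon\|_{L^\infty}\le C_\sigma\gamma^{1/2}\|\delta\varepsilon\|_{L^\infty}$ by~\eqref{eq:exp-dec-0}, and is absorbed by shrinking $\gamma_\sigma$, exactly as you already do for the $\langle f_0,\delta\varepsilon\rangle_\C$ term. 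With this modification the proof is complete.
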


This concludes our analysis of the first equation in~\eqref{eq:sys-0-w}. We now turn to the second equation. In view of Lemma~\ref{lem:sol-psi0}, we assume that the function $\psi_0$ is fixed in $\boX_0(\R)$, and that it belongs to the open set $\boU(\mu)$ for some number $\mu > 0$.

Our goal is now to invert the operator $T$ in~\eqref{def:boT}. Decomposing it as in~\eqref{eq:dec-boT}, we put the focus on the one-variable operators $T_k$. We first claim that we can restrict their analysis to the space $H^1(\R)$. Invoking the Poincar\'e-Wirtinger inequality (again with respect to $y$)
$$
\big\| w \big\|_{L^2(\Omega_d)} \leq \frac{d}{\pi} \big\| \partial_y w \big\|_{L^2(\Omega_d)},
$$
we indeed check that the function
$$
w(x,y) = \sum_{k=1}^{+\infty} \psi_k(x)\cos \Big( \frac{\pi k y} d \Big)
$$
does not only belong to $\dot{H}^1(\Omega_d)$, but actually to $H^1(\Omega_d)$, so that the functions $\psi_k$ are in $H^1(\R)$.

We next recall that, when the function $\psi_0$ is close to the black soliton $S_0$, the operator $T_k$ appears as a perturbation of the operator $L_0 + \pi^2 k^2/d^2$, which we have previously analyzed in this sketch of the proof. Summarizing this analysis, we can claim that the operator $L_0 + \pi^2 k^2/d^2$ is invertible, when $d \neq d_k$, whereas it is not for $d = d_k$. In this latter case, it is however invertible when restricted to the intersection of $H^2(\R, \C)$ with the orthogonal space
$$
H_0 := \big\{ \psi \in L^2(\R, \C) \text{ s.t. } \langle \psi, i \chi_0 \rangle_{L^2(\R)} = 0 \big\}.
$$
In the two cases, we can also check that the norm of its inverse, as an operator from $L^2(\R)$ to $L^2(\R)$, is bounded uniformly with respect to $k \geq 1$.

We claim that the operators $T_k$ have similar invertibility properties when the function $\psi_0$ is close enough to the black soliton $S_0$. By using the Parseval identity, this provides a precise description of the invertibility properties of the operator $T$.

In order to state them precisely, we now fix an integer $k \geq 1$ and introduce the subspace
$$
H_k := \Big\{ \psi \in L^2(\Omega_d, \C) \text{ s.t. } \psi_0 = 0 \text{ and } \psi_k \in H_0 \Big\},
$$
where $\psi_0$ and $\psi_k$ are given by~\eqref{eq:psik}, as well as the corresponding orthogonal projection $\pi_k$ given by
\begin{equation}
\label{def:pi-k}
\forall \psi \in L^2(\Omega_d, \C), \, \pi_k(\psi) := \psi - \frac{\langle \psi, \chi_k \rangle_{L^2(\Omega_d)}}{\| \chi_k \|_{L^2(\Omega_d)}^2} \chi_k,
\end{equation}
where $\chi_k$ is the function in~\eqref{def:chi-k}.

We also introduce the weighted Lebesgue spaces
$$
L_\sigma^p(\Omega_d) := \Big\{ \psi \in L_\text{loc}^p(\Omega_d, \C) \text{ s.t. } \big\| \psi \big\|_{L_\sigma^p(\Omega_d)} := \big\| \psi(x, y) \, e^{\sigma |x|} \big\|_{L^p(\Omega_d)} < + \infty \Big\},
$$
for any numbers $\sigma \in \R$ and $1 \leq p \leq \infty$.

When $d$ and $\psi_0$ are close enough to $d_k$, respectively $S_0$, we obtain the following invertibility properties for the operator $T$.

\begin{lemma}
\label{lem:invert-T-k}
Let $k \geq 1$. There exist three numbers $\delta_0 > 0$, $\mu_0 > 0$ and $\kappa_0 > 0$ such that, for any width $d_k - \delta_0 < d < d_k + \delta_0$ and any function $\psi_0 \in \boU(\mu_0)$ such that
\begin{equation}
\label{eq:unif-eta-eps-0}
\big\| \eta_0 \big\|_{L^\infty(\R)} +\big\| \varepsilon_0 \big\|_{L^\infty(\R)} < \mu_0,
\end{equation}
where $\varepsilon_0$ and $\eta_0$ are defined by~\eqref{eq:epseta}, the following holds:

\noindent $(i)$ The operator $T$ given by~\eqref{def:boT} is invertible from $\boH^2(\Omega_d, \C) \cap H_k$ to $\boL^2(\Omega_d, \C) \cap H_k$ : Given any function $g \in \boL^2(\Omega_d, \C),$ there exists a unique function $w \in \boH^2(\Omega_d, \C) \cap H_k$ such that
\begin{equation}
\label{eq:proj-inv-L1}
\pi_k \big( T(w) - g \big) = 0.
\end{equation}
Moreover this function satisfies
\begin{equation}
\label{eq:proj-inv-L2}
\big\| w \big\|_{H^2(\Omega_d)} \leq \kappa_0 \big\| g \big\|_{L^2(\Omega_d)}.
\end{equation}

\noindent $(ii)$ There exist $\sigma_0$, $\delta_1$, $\nu_0 > 0$ such that, if $0<\tau<\sigma<\sigma_0$ and $d_k - \delta_1 < d < d_k + \delta_1$ then, assuming that
\begin{equation}
\label{eq:cond-eta-eps-0}
\big\| \eta_0 \big\|_{L_\sigma^\infty(\R)} + \big\| \varepsilon_0 \big\|_{L^\infty(\R)} + \big\| \varepsilon_0' \big\|_{L_\sigma^\infty(\R)} < \nu_0,
\end{equation}
the condition in~\eqref{eq:unif-eta-eps-0} is satisfied as well, and for any $g$ in $L_\sigma^\infty(\Omega_d, \C)$, the function $w$ given by~\eqref{eq:proj-inv-L1} satisfies the following estimate
\begin{equation}
\label{eq:exp-bound-invert-T}
\big\| w \big\|_{L_\tau^\infty(\Omega_d)} \leq \kappa_\tau \big\| g \big\|_{L_\sigma^\infty(\Omega_d)},
\end{equation}
where $\kappa_\tau \geq 0$ depends on $k$, $\delta_1$, $\nu_0$, $\sigma_0$, $\sigma$ and $\tau$.
\end{lemma}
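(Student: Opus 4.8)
\emph{Overall strategy.} The plan is to use the Fourier decomposition~\eqref{eq:dec-boT}, which reduces the invertibility of $T$ to a uniform analysis of the one-variable operators $T_j$, $j \geq 1$, and to treat each $T_j$ as a perturbation of the model operator $M_j := L_0 + \pi^2 j^2/d^2$. A direct computation using $\varepsilon_0 = \psi_0 - S_0$ and $\eta_0 = S_0^2 - |\psi_0|^2$ shows that $T_j - M_j$ is the $j$-\emph{independent} multiplication-type operator
$$
E(\psi) = - \eta_0 \, \psi + 2 \langle S_0, \psi \rangle_\C \, \varepsilon_0 + 2 \langle \varepsilon_0, \psi \rangle_\C \, \psi_0,
$$
whose norm on $L^2(\R)$ is bounded by $C\big(\|\eta_0\|_{L^\infty(\R)} + \|\varepsilon_0\|_{L^\infty(\R)}\big)$, hence by $C\mu_0$ under~\eqref{eq:unif-eta-eps-0}. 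The $j$-independence of this bound is the structural point that makes the later Parseval assembly legitimate. Throughout, $E$, $L_0$ and all the projections preserve the symmetry class (odd real part, even imaginary part), so the argument takes place in the symmetric subspaces.

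\emph{Part $(i)$.} First I would record the spectral picture of $M_j$, which is immediate from the Sturm--Liouville facts recalled above: $L_0^+ \geq 0$, so $L_0^+ + \pi^2 j^2/d^2 \geq \pi^2 j^2/d^2 > 0$ is invertible on $H^2(\R)$ with inverse norm $\leq d^2/(\pi^2 j^2)$; and $L_0^-$ has the single eigenvalue $-\lambda_0 = -1/2$ below its essential spectrum $[0, +\infty)$, with eigenfunction $\chi_0$, so $L_0^- + \pi^2 j^2/d^2$ has smallest eigenvalue $\pi^2 j^2/d^2 - 1/2$ and is bounded below by $\pi^2 j^2/d^2$ on the orthogonal complement of $\chi_0$. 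Since $\pi^2 k^2/d_k^2 = 1/2$ and $|j^2 - k^2| \geq 2k - 1$ for integers $j \neq k$, choosing $\delta_0$ small ensures $|\pi^2 j^2/d^2 - 1/2| \geq c(k) > 0$ for all $j \neq k$ and all $|d - d_k| < \delta_0$. Hence $M_j$ is invertible on $H^2(\R)$ for $j \neq k$, with inverse norm bounded uniformly in $j$, while for the critical mode $j = k$ the operator $\Pi_0 M_k$, where $\Pi_0$ denotes the orthogonal projection of $L^2(\R,\C)$ onto $H_0$, is invertible from $\boH^2(\R) \cap H_0$ onto $\boL^2(\R) \cap H_0$ with inverse norm $\leq d^2/(\pi^2 k^2)$. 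Taking $\mu_0$ small compared with the worst of these inverse norms, a Neumann series gives the invertibility of $T_j$ for $j \neq k$ and of $\Pi_0 T_k$ on $H_0$, with bounds uniform in $j$; standard elliptic estimates then control $\|\psi_j\|_{H^2(\R)}$ and $(j^2/d^2)\|\psi_j\|_{L^2(\R)}$ by $\|g_j\|_{L^2(\R)}$, uniformly in $j$, for $\psi_j := T_j^{-1} g_j$. Since solving $\pi_k(T(w) - g) = 0$ is exactly the system $T_j \psi_j = g_j$ for $j \neq k$ together with $\Pi_0(T_k \psi_k - g_k) = 0$ and $\psi_k \in H_0$ (the zeroth Fourier mode of the right-hand sides arising in~\eqref{eq:sys-0-w} vanishes), Parseval's identity yields both the uniqueness of $w$ and the bound~\eqref{eq:proj-inv-L2}, with $\kappa_0 = \kappa_0(k)$.

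\emph{Part $(ii)$.} I would work directly on $\Omega_d$. By $(i)$ and the Sobolev embedding $\boH^2(\Omega_d) \hookrightarrow L^\infty(\Omega_d)$, the function $w$ is bounded with $\|w\|_{L^\infty(\Omega_d)} \lesssim \|g\|_{L^2(\Omega_d)} \lesssim \|g\|_{L_\sigma^\infty(\Omega_d)}$; moreover $\pi_k(T(w) - g) = 0$ means $T(w) = \pi_k g + c \, \chi_k$ for a scalar $c$ with $c \, \|\chi_k\|_{L^2(\Omega_d)}^2 = \langle w, T\chi_k \rangle_{L^2(\Omega_d)} - \langle g, \chi_k \rangle_{L^2(\Omega_d)}$ (using that $T$ is formally self-adjoint for the real $L^2$-pairing). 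As $T\chi_k = \big((\pi^2 k^2/d^2 - 1/2)\,i\chi_0 + E(i\chi_0)\big)\cos(\pi k y/d)$ lies in $L_{\sigma'}^\infty(\Omega_d)$ for every $\sigma' < 1/\sqrt 2$, one gets $|c| \lesssim \|w\|_{L^2} + \|g\|_{L^2}$, and, provided $\sigma_0 < 1/\sqrt 2$, the right-hand side $h := T(w)$ satisfies $\|h\|_{L_\sigma^\infty(\Omega_d)} \lesssim \|g\|_{L_\sigma^\infty(\Omega_d)}$. The decay of $w$ then follows from an Agmon-type weighted energy estimate: testing $T(w) = h$ against $w \, e^{2\phi_n}$, where $\phi_n$ is a smooth bounded approximation of $\tau\sqrt{x^2 + 1}$ with $|\nabla\phi_n| \leq \tau$, completing the square turns the principal part into $\int_{\Omega_d} |\nabla(w e^{\phi_n})|^2 - \int_{\Omega_d} |\nabla\phi_n|^2 |w|^2 e^{2\phi_n}$; the potential terms for $|x| < R$ are absorbed into $C_R \|w\|_{L^2(\Omega_d)}^2$, those for $|x| > R$ are small because $\eta_0 \in L_\sigma^\infty(\R)$, the contribution $2\int_{\Omega_d}\langle\psi_0, w e^{\phi_n}\rangle_\C^2$ is nonnegative, and the Poincar\'e--Wirtinger inequality $\|v\|_{L^2(\Omega_d)} \leq \tfrac d\pi \|\partial_y v\|_{L^2(\Omega_d)}$, valid since $w e^{\phi_n}$ has zero average in $y$, supplies the effective mass $\pi/d$. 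Choosing $\sigma_0 \leq \tfrac12 \pi/(d_k + \delta_1)$ makes $\tau^2 (d/\pi)^2 < 1$, so all weight-derivative and potential errors can be absorbed, and letting $n \to \infty$ gives $w \, e^{\tau\sqrt{x^2 + 1}} \in H^1(\Omega_d)$ with norm $\lesssim \|g\|_{L_\sigma^\infty(\Omega_d)}$. Finally, $v := w \, e^{\tau\sqrt{x^2+1}}$ solves an elliptic equation $T(v) = h\, e^{\tau\sqrt{x^2+1}} - 2\tau\nabla\phi\cdot\nabla v + (\tau^2|\nabla\phi|^2 - \tau\Delta\phi)\, v$ with right-hand side in $L^2(\Omega_d)$, so elliptic regularity gives $v \in H^2(\Omega_d)$ and Sobolev embedding $v \in L^\infty(\Omega_d)$; since $\sqrt{x^2+1} \geq |x|$ this is~\eqref{eq:exp-bound-invert-T}, with $\kappa_\tau$ blowing up like $(\sigma - \tau)^{-1/2}$ and depending on $k$ through the mass gap $\pi/d_k$.

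\emph{Main obstacle.} The hard part is the weighted $L^\infty$-estimate in $(ii)$: one must fix $\sigma_0$ — hence all the exponential decay rates — strictly below the mass gap $\pi/d$ so that the exponential weight is absorbable in the energy identity, keep simultaneous control of the correction term $c\chi_k$ produced by the projection $\pi_k$, and then carry out the elliptic bootstrap from a weighted $H^1$-bound to a weighted $L^\infty$-bound on a two-dimensional strip. In $(i)$ the only real subtlety — and the source of the bifurcation — is the codimension-one invertibility of the critical operator $\Pi_0 T_k$ on $H_0$, together with the uniformity in $j$ of the invertibility bounds for the remaining modes $T_j$, which is precisely what makes the Parseval assembly work.
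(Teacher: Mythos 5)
Your proposal is correct, and while it rests on the same structural facts as the paper (the Fourier decomposition~\eqref{eq:dec-boT}, the $j$-independent bound $\|T_j - L_0 - \pi^2 j^2/d^2\|_{L^2 \to L^2} \lesssim \mu_0$, and bounded Lipschitz approximations of the weight $e^{\tau|x|}$), it is organized quite differently in both parts. For part $(i)$ the paper does not use a Neumann series: it splits the modes into $j>k$, $j=k$ and $j<k$ and runs a separate Lax--Milgram argument in each group, proving coercivity of the bilinear form $\boB$ on $X_k$ via Parseval, on $Z_k \cap H_k$ via the non-negativity of $\boQ_0$ there, and for $j<k$ by first solving explicitly for the $i\chi_0$-component (using that $\pi^2 j^2/d^2 - 1/2$ stays away from zero) and then inverting a modified coercive form $\tilde{\boB}_j$ on $H_0$. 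Your uniform perturbation argument around the explicitly diagonalizable $L_0 + \pi^2 j^2/d^2$ reaches the same $L^2\to H^2$ conclusion more directly, at the price of having to check the uniformity in $j$ of the inverse norms and of the elliptic estimates before reassembling by Parseval — which you do. The more substantive divergence is in part $(ii)$: the paper re-derives \emph{weighted} coercivity sector by sector (inserting $e^{2f}\psi$ into each weak formulation and tracking the projection coefficients $\lambda_k,\mu_k$ and $\mu_j,\nu_j$), whereas you run a single Agmon-type estimate for the full operator $T$ on $\Omega_d$, discarding the non-negative term $2\int\langle\psi_0,\cdot\rangle_\C^2$, using the Poincar\'e--Wirtinger mass $\pi^2/d^2$ at infinity, and — crucially — dumping the non-coercive compactly supported part of the potential into $C_R\|w\|_{L^2(\Omega_d)}^2$, which is already controlled by part $(i)$. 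This is legitimate precisely because the negative modes of $T_j$ for $j\le k$ come from the part of the potential that decays, so they are invisible in the region $|x|>R$ where the decay estimate is actually fought; it lets you bypass the three-case analysis entirely in the weighted step, at the cost of making part $(ii)$ logically dependent on the a priori bound of part $(i)$ (the paper's version is self-contained in each sector). The remaining ingredients — absorbing the correction $c\,\chi_k$ into the right-hand side since $\chi_k \in L_{\sigma}^\infty(\Omega_d)$ for $\sigma < 1/\sqrt{2}$, and the final bootstrap from a weighted $H^1$ bound to $H^2$ and then $L^\infty$ by Sobolev embedding on the strip — coincide with the paper's. I see no gap.
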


In the sequel, we invoke this invertibility property in order to write the second equation in~\eqref{eq:sys-0-w} as a fixed point equation, and then solve it by a fixed point argument. The fact that this argument must be combined to a similar one for solving the first equation in~\eqref{eq:sys-0-w}, in view of the Lipschitz control in Lemma~\ref{lem:control-exp-V-N}, is why it is natural to perform this second fixed point argument in the spaces $L_\sigma^\infty(\Omega_d)$.

With Lemma~\ref{lem:invert-T-k} at hand, we are in a position to settle the fixed point argument which we apply in order to solve the system in~\eqref{eq:sys-0-w}. We fix a number $k \geq 1$ and a width $d_k - \delta_1 < d < d_k + \delta_1$, where $\delta_1$ is the positive number given by Lemma~\ref{lem:invert-T-k}. For $0 < \sigma \leq \sigma_0/2$, we also introduce the set
$$
\boY_\sigma^\infty(\R) := \Big\{ \psi_0 \in \boX_0(\R) \text{ s.t. } \psi_0' \in L_\sigma^\infty(\R) \Big\},
$$
Note that this set is not a vector space, but it is a subset of the vector space
$$
W_{0, \sigma}^{1, \infty}(\R) := \Big\{ \psi_0 \in \boC^0(\R, \C) \text{ s.t. } \psi_0(0) = 0 \text{ and } \psi_0' \in L_\sigma^\infty(\R) \Big\},
$$
which we can endow with the norm
$$
\big\| \psi_0 \big\|_{W_{0, \sigma}^{1, \infty}(\R)} := \big\| \psi' \big\|_{L_\sigma^\infty(\R)}.
$$
In particular, the set $\boY_\sigma^\infty(\R)$ is naturally endowed with the metric structure given by the distance corresponding to this norm.

Given a function $\psi_0 \in \boY_\sigma^\infty(\R)$, and a map $w \in \boL_\sigma^\infty(\Omega_d)$, we next denote by $f_0(\psi_0, w)$ the function given by~\eqref{def:f-0}, and we also define the nonlinearity
\begin{equation}
\label{def:g}
g(\psi_0, w) := - 2 \, \langle \psi_0, w \rangle_\C \, w - |w|^2 (\psi_0 + w) + f_0(\psi_0, w).
\end{equation}
We claim that these functions are in $\boL_{2 \sigma}^\infty(\R)$, respectively in $\boL_{2 \sigma}^\infty(\Omega_d)$, and have Lipschitz continuous dependence on the functions $\psi_0$ and $w$. More precisely, we show

\begin{lemma}
\label{lem:non-linearity}
Let $0 \leq \sigma \leq \sigma_0/2$. The maps $f_0$ and $g$ are well-defined from $\boY_\sigma^\infty(\R) \times \boL_\sigma^\infty(\Omega_d)$ to $\boL_{2 \sigma}^\infty(\R)$, respectively to $\boL_{2 \sigma}^\infty(\Omega_d)$. Moreover there exists a universal constant $K \geq 0$ such that
\begin{equation}
\label{eq:lips-f-0-g}
\begin{split}
\Big\| f_0 \big( \tilde{\psi}_0, \tilde{w} \big) - f_0 \big( \psi_0, w \big) \Big\|_{L_{2 \sigma}^\infty(\R)} + & \Big\| g \big( \tilde{\psi}_0, \tilde{w} \big) - g \big( \psi_0, w \big) \Big\|_{L_{2 \sigma}^\infty(\Omega_d)} \\
\leq K \Big( \big\| \psi_0 - \tilde{\psi}_0 \big\|_{L^\infty(\R)} \, \big\| w \big\|_{L_\sigma^\infty(\Omega_d)}^2 + & \big\| \tilde{w} - w \big\|_{L_\sigma^\infty(\Omega_d)} \, \big( \big\| \tilde{w} \big\|_{L_\sigma^\infty(\Omega_d)} + \big\| w \big\|_{L_\sigma^\infty(\Omega_d)} \big) \times \\
& \times \big( \big\| \tilde{\psi}_0 \big\|_{L^\infty(\R)} + \big\| \tilde{w} \big\|_{L_\sigma^\infty(\Omega_d)} + \big\| w \big\|_{L_\sigma^\infty(\Omega_d)}\big) \Big),
\end{split}
\end{equation}
for any pairs $(\tilde{\psi}_0, \tilde{w})$ and $(\psi_0, w)$ in $\boY_\sigma^\infty(\R) \times \boL_\sigma^\infty(\Omega_d)$.
\end{lemma}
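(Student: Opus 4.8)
The plan is to treat the two maps simultaneously, observing that each monomial appearing in
$$g(\psi_0, w) = - 2 \langle \psi_0, w \rangle_\C \, w - |w|^2 (\psi_0 + w) + f_0(\psi_0, w)$$
is a cubic expression in $(\psi_0, w)$ at least two of whose three factors are copies of $w$, and that $f_0(\psi_0, w)(x)$ is, after the averaging $\frac1d \int_0^d \cdot \, dy$, a sum of the same monomials evaluated pointwise. First I would record the elementary facts used throughout: a function in the energy space is bounded, so $\psi_0 \in \boY_\sigma^\infty(\R) \subset \boX_0(\R)$ lies in $L^\infty(\R)$; for $\sigma \geq 0$ one has $\| w \|_{L^\infty} \leq \| w \|_{L_\sigma^\infty}$ and $\bigl\| |w|^2 \bigr\|_{L_{2\sigma}^\infty} = \| w \|_{L_\sigma^\infty}^2$; and since the weight $e^{2\sigma|x|}$ does not involve $y$, a function $h$ independent of $y$ satisfies $\| h \|_{L_{2\sigma}^\infty(\Omega_d)} = \| h \|_{L_{2\sigma}^\infty(\R)}$. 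For well-definedness I multiply a product of two $w$-factors and one bounded factor by $e^{2\sigma|x|}$ and route a factor $e^{\sigma|x|}$ onto each of the two $w$-factors: this shows each monomial lies in $L_{2\sigma}^\infty$ with a cubic bound in $(\| \psi_0 \|_{L^\infty}, \| w \|_{L_\sigma^\infty})$, hence so does $g(\psi_0, w)$, while the $y$-average (of norm $\leq 1$ after dividing by $d$) gives the same for $f_0(\psi_0, w) \in L_{2\sigma}^\infty(\R)$. That the images lie in the calligraphic subspaces is a short direct check: the symmetries in~\eqref{def:X-sym} imposed on $\psi_0$ and $w$ are preserved by these algebraic operations and by averaging in $y$ of a function even in $y$.

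For the Lipschitz bound, set $\Delta\psi := \tilde\psi_0 - \psi_0$ and $\Delta w := \tilde w - w$, and expand the difference of each monomial by a telescoping identity in which I \emph{first} replace $\psi_0$ by $\tilde\psi_0$ while keeping the old $w$'s, and \emph{then} replace $w$ by $\tilde w$ in each of its two slots while keeping the new $\tilde\psi_0$. Concretely, for a term $B(\psi_0, w, w)$ with $B$ (real-)linear in each slot,
$$B(\tilde\psi_0, \tilde w, \tilde w) - B(\psi_0, w, w) = B(\Delta\psi, w, w) + B(\tilde\psi_0, \Delta w, \tilde w) + B(\tilde\psi_0, w, \Delta w),$$
and for the purely cubic term $|w|^2 w$ the analogous three-term telescoping in the $w$-slots alone. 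Applying the weight-routing argument term by term gives $\bigl| B(\Delta\psi, w, w) \bigr| \, e^{2\sigma|x|} \leq C \, \| \Delta\psi \|_{L^\infty(\R)} \, \| w \|_{L_\sigma^\infty}^2$; each term carrying a single $\Delta w$ is $\leq C \, \| \tilde\psi_0 \|_{L^\infty(\R)} \, \| \Delta w \|_{L_\sigma^\infty} \bigl( \| \tilde w \|_{L_\sigma^\infty} + \| w \|_{L_\sigma^\infty} \bigr)$; and the $|w|^2 w$-telescoping contributes $\leq C \, \| \Delta w \|_{L_\sigma^\infty} \bigl( \| \tilde w \|_{L_\sigma^\infty} + \| w \|_{L_\sigma^\infty} \bigr)^2$. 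Summing over the finitely many monomials in $g$, and carrying out the identical estimate under the $y$-integral before taking absolute values and the supremum in $x$ for $f_0$, yields~\eqref{eq:lips-f-0-g} with a universal $K$ (using $\| \tilde w \| + \| w \| \leq \| \tilde\psi_0 \| + \| \tilde w \| + \| w \|$ to absorb the $|w|^2 w$ contribution into the stated form).

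The computation is essentially bookkeeping, with no deep obstacle. The two points requiring care are the order of telescoping — it is precisely the choice of pairing $\Delta\psi$ with the \emph{old} $w$'s and $\Delta w$ with the \emph{new} $\tilde\psi_0$ that produces the asymmetric right-hand side of~\eqref{eq:lips-f-0-g}, so that only $\| \tilde\psi_0 \|_{L^\infty}$ (and not $\| \psi_0 \|_{L^\infty}$) appears — and the routing of the exponential weights, since one has only $e^{2\sigma|x|}$ to distribute over three factors of which all three may decay; the remaining factor must then be estimated in $L^\infty$ rather than $L_\sigma^\infty$, which is exactly why the $\psi_0$-difference term carries $\| w \|_{L_\sigma^\infty}^2$ and not a mixed product.
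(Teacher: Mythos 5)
Your proposal is correct and follows essentially the same route as the paper: the same three-term telescoping of each cubic monomial (pairing $\tilde\psi_0-\psi_0$ with the old $w$'s and $\tilde w - w$ with $\tilde\psi_0$ and the remaining $w$-factors), followed by routing the weight $e^{2\sigma|x|}$ onto the two $w$-factors and estimating the third factor in $L^\infty$, with the $y$-average handled trivially since the weight is independent of $y$. No gaps.
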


It follows from~\eqref{eq:lips-f-0-g} that, if we restrict ourselves to pairs $(\tilde{\psi}_0, \tilde{w})$ and $(\psi_0, w)$, which are close enough to the pair $(S_0, 0)$, the Lipschitz constant of $f_0$ and $g$ are small. This last observation allows to implement a fixed point argument, which we now detail.

We first introduce a small parameter $\rho > 0$ and the closed ball $\overline{B}((S_0, 0), \rho)$ with center $(S_0, 0)$ and radius $\rho$ in $\boY_\sigma^\infty(\R) \times (\boL_\sigma^\infty(\Omega_d) \cap H_k)$. Given a pair $(\psi_0, w)$ in this ball, and a real number $|\lambda| \leq \rho $, we deduce from the definition of the function $f_0(\psi_0, w + \lambda \chi_k)$ and its Lipschitz continuity in~\eqref{eq:lips-f-0-g} that this function satisfies the conditions in~\eqref{eq:cond-f0} (with $\beta = \beta_{2 \sigma}$) and in~\eqref{eq:cond-exp-f0} (with $\gamma = \gamma_{2 \sigma}$) when $\rho$ is chosen small enough. As a consequence of Lemma~\ref{lem:exp-dec-eta-0}, there exists a unique minimizer $\psi_0^\lambda \in \boU(\alpha_0)$ of the minimization problem $\boI_{f_0(\psi_0, w + \lambda \chi_k)}(\alpha_0)$. Moreover, the corresponding functions $\eta_0^\lambda$ and $(\varepsilon_0^\lambda)'$ are in $L_\sigma^\infty(\R)$ and satisfy the estimate in~\eqref{eq:exp-dec-0}. Decreasing if necessary the value of the number $\rho$, we can assume that these functions satisfy the condition in~\eqref{eq:cond-eta-eps-0} with $\nu \leq \nu_0$, and then that the function $\psi_0^\lambda$ is in $\boU(\mu_0)$.

Invoking Lemma~\ref{lem:invert-T-k}, we next consider the function $w^\lambda \in \boL_\sigma^\infty(\R)$ corresponding by~\eqref{eq:proj-inv-L1} to the function $g(\psi_0, w + \lambda \chi_k)$, and then the map $\Xi_k$ given by
$$
\Xi_k \big( \psi_0, w, \lambda \big) = \big( \psi_0^\lambda, w^\lambda - \lambda \chi_k \big).
$$
Going back to Lemmas~\ref{lem:control-exp-V-N} and~\ref{lem:invert-T-k}, we claim that for $\rho$ small enough and $|\lambda| < \rho$, the map $\Xi_k(\cdot, \cdot, \lambda)$ is a contraction on the closed ball $\overline{B}((S_0, 0), \rho)$. This property is enough to show the following existence result in which we also use the vector space
$$
W_{0, \sigma}^{2, \infty}(\R) := \Big\{ \psi_0 \in W_{0, \sigma}^{1, \infty}(\R) \text{ s.t. } \psi_0'' \in L_\sigma^\infty(\R) \Big\},
$$
which we naturally endow with the norm
$$
\big\| \psi_0 \big\|_{W_{0, \sigma}^{2, \infty}(\R)} := \big\| \psi_0 \big\|_{W_{0, \sigma}^{1, \infty}(\R)} + \big\| \psi_0'' \big\|_{L_\sigma^\infty(\R)},
$$
as well as the vector space
$$
W_{0, \sigma}^{2, \infty}(\Omega_d) := \Big\{ \psi = \psi_0 + w \in L^2(\Omega_d, \C) \text{ s.t. } \psi_0 = 0, \, \nabla w \in L_\sigma^\infty(\Omega_d) \text{ and } D^2 w \in L_\sigma^\infty(\Omega_d) \Big\},
$$
similarly endowed with the norm
$$
\big\| \psi \big\|_{W_\sigma^{2, \infty}(\Omega_d)} = \big\| \nabla w \big\|_{L_\sigma^\infty(\Omega_d)} + \big\| D^2 w \big\|_{L_\sigma^\infty(\Omega_d)}.
$$

\begin{proposition}
\label{prop:exist-proj-sol}
For $k \geq 1$, let $0 \leq \sigma \leq \sigma_0/2$ and $d_k - \delta_1 < d < d_k + \delta_1$, where $\sigma_0$ and $\delta_1$ are defined in Lemma~\ref{lem:invert-T-k}. There exists a number $\rho_1 > 0$ such that, given any real number $|\lambda| < \rho_1$, there exist maps $(\Psi_0^\lambda, W^\lambda) \in \boY_\sigma^\infty(\R) \times (\boL_\sigma^\infty(\Omega_d)\cap H_k)$ such that $\Psi_0^\lambda$ and $W^\lambda$ are smooth on $\Omega_d$ and solve the equations
\begin{equation}
\label{eq:proj-syst-0}
- \big( \Psi_0^\lambda \big)'' - \Psi_0^\lambda \big( 1 - |\Psi_0^\lambda|^2 \big) = - f_0 \big( \Psi_0^\lambda, W^\lambda + \lambda \chi_k \big),
\end{equation}
and
\begin{equation}
\label{eq:proj-syst-1}
\begin{split}
\pi_k \bigg( - \Delta \big( W^\lambda + \lambda \chi_k \big) - \big( W^\lambda+ \lambda \chi_k \big) \big( 1 - |\Psi_0^\lambda|^2 \big) & \\ + 2 \, \langle \Psi_0^\lambda, W^\lambda + & \lambda \chi_k \rangle_\C \, \Psi_0^\lambda - g\big( \Psi_0^\lambda, W^\lambda + \lambda \chi_k \big) \bigg) = 0.
\end{split}
\end{equation}
Moreover, these maps are the unique solutions of the previous equations in $\boY_\sigma^\infty(\R) \times (\boL_\sigma^\infty(\Omega_d)\cap H_k)$ such that
$$
\big\| \Psi_0^\lambda - S_0 \big\|_{W_{0, \sigma}^{1, \infty}(\R)} < \rho_1, \quad \text{ and } \quad \big\| W^\lambda \big\|_{L_\sigma^\infty(\Omega_d)} < \rho_1,
$$
and the map $\lambda \to (\Psi_0^\lambda, W^\lambda)$ is smooth from $(- \rho_1, \rho_1)$ to $W_{0, \sigma}^{2, \infty}(\R) \times W_{0, \sigma}^{2, \infty}(\Omega_d)$, while the map $\lambda \mapsto 1 - |\Psi_0^\lambda|^2$ is smooth from $(- \rho_1, \rho_1)$ to $L^2(\R)$.
\end{proposition}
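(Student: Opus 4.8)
The plan is to obtain Proposition~\ref{prop:exist-proj-sol} from the Banach fixed point theorem applied, for each fixed $\lambda$, to the map $\Xi_k(\cdot,\cdot,\lambda)$ defined above on the closed ball $\overline{B}((S_0,0),\rho)$, with $\rho>0$ chosen small and then $0<\rho_1\leq\rho$ chosen so that $|\lambda|\,\|\chi_k\|_{L_\sigma^\infty(\Omega_d)}\leq\rho$ whenever $|\lambda|<\rho_1$. The first task is to verify that $\Xi_k(\cdot,\cdot,\lambda)$ is well-defined on this ball. Given $(\psi_0,w)$ in it, the function $w+\lambda\chi_k$ has size $O(\rho)$ in $\boL_\sigma^\infty(\Omega_d)$, so by Lemma~\ref{lem:non-linearity} together with the identities $f_0(\psi_0,0)=g(\psi_0,0)=0$, the nonlinearities $f_0(\psi_0,w+\lambda\chi_k)$ and $g(\psi_0,w+\lambda\chi_k)$ are of size $O(\rho^2)$ in $\boL_{2\sigma}^\infty(\R)$, respectively $\boL_{2\sigma}^\infty(\Omega_d)$. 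Since $L_{2\sigma}^\infty\hookrightarrow L_\sigma^\infty$, for $\rho$ small they satisfy~\eqref{eq:cond-f0} and~\eqref{eq:cond-exp-f0}, hence by Lemmas~\ref{lem:sol-psi0}--\ref{lem:control-exp-V-N} they determine a unique minimizer $\psi_0^\lambda$ close to $S_0$ whose $\varepsilon_0^\lambda$, $(\varepsilon_0^\lambda)'$, $\eta_0^\lambda$ are $O(\rho^2)$ in the norms of~\eqref{eq:exp-dec-0}; in particular~\eqref{eq:unif-eta-eps-0} and~\eqref{eq:cond-eta-eps-0} hold and $\psi_0^\lambda\in\boU(\mu_0)$, so Lemma~\ref{lem:invert-T-k} applies. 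The delicate point here is that the weight gained in Lemma~\ref{lem:non-linearity} (from $\sigma$ to $2\sigma$) exactly covers the weight lost in Lemma~\ref{lem:invert-T-k}$(ii)$ (from $2\sigma$ back down to $\sigma$), so that the resulting function stays in $L_\sigma^\infty(\Omega_d)$ and $\Xi_k(\psi_0,w,\lambda)$ indeed lands in $\boY_\sigma^\infty(\R)\times(\boL_\sigma^\infty(\Omega_d)\cap H_k)$.

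The second task is the self-mapping and contraction properties. For self-mapping, feeding the $O(\rho^2)$ bounds above into Lemma~\ref{lem:control-exp-V-N} (comparing with $f_1=0$, $\psi_1=S_0$, $\eta_1=0$) gives $\|\psi_0^\lambda-S_0\|_{W_{0,\sigma}^{1,\infty}(\R)}=O(\rho^2)$, and feeding them into the estimate~\eqref{eq:exp-bound-invert-T} of Lemma~\ref{lem:invert-T-k}$(ii)$ — applied to $g(\psi_0,w+\lambda\chi_k)-\lambda\,T_{\psi_0^\lambda}(\chi_k)$, the correction $\lambda\,T_{\psi_0^\lambda}(\chi_k)$ being transferred to the right-hand side precisely so that the second component of $\Xi_k$ stays in $H_k$ — gives, using that $T_{S_0}(\chi_k)\equiv0$ when $d=d_k$ (so that $\|T_{\psi_0^\lambda}(\chi_k)\|_{L_{2\sigma}^\infty}\leq C(|d-d_k|+\rho^2)$), a bound of size $C\rho^2+C|\lambda|(|d-d_k|+\rho^2)$ on that component; shrinking first $\rho$ and then $\rho_1$ makes $\Xi_k(\cdot,\cdot,\lambda)$ map $\overline{B}((S_0,0),\rho)$ into itself. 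For the contraction, given two pairs in the ball the Lipschitz estimate~\eqref{eq:lips-f-0-g} controls the differences of $f_0(\psi_0,w+\lambda\chi_k)$ and $g(\psi_0,w+\lambda\chi_k)$ by $O(\rho)$ times the distance between the pairs, and composing with the Lipschitz maps $f_0\mapsto\psi_0^\lambda$ (Lemma~\ref{lem:control-exp-V-N}, constant $B_{2\sigma}$) and $g\mapsto w^\lambda$ (Lemma~\ref{lem:invert-T-k}$(ii)$, constant $\kappa_\sigma$) — whose constants do not depend on $\rho$ — yields a global Lipschitz constant $O(\rho)<1$ once $\rho$ is small. Banach's theorem then furnishes, for each $|\lambda|<\rho_1$, a unique fixed point $(\Psi_0^\lambda,W^\lambda)$ in the ball.

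The third task is to identify the equations, establish the regularity, and prove smoothness in $\lambda$. The identity $\Psi_0^\lambda=(\Psi_0^\lambda)^\lambda$ says exactly that $\Psi_0^\lambda$ is the minimizer attached to $f_0(\Psi_0^\lambda,W^\lambda+\lambda\chi_k)$, hence it solves~\eqref{eq:psi-0}, which is~\eqref{eq:proj-syst-0}; the identity defining the second component of $\Xi_k$ together with~\eqref{eq:proj-inv-L1} gives~\eqref{eq:proj-syst-1}. Conversely, any pair solving~\eqref{eq:proj-syst-0}--\eqref{eq:proj-syst-1} with the stated bounds is a fixed point of $\Xi_k(\cdot,\cdot,\lambda)$, by the uniqueness clauses in Lemmas~\ref{lem:sol-psi0} and~\ref{lem:invert-T-k}, which yields the asserted local uniqueness. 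Smoothness of $\Psi_0^\lambda$ and $W^\lambda$ on $\Omega_d$ follows from elliptic regularity: $\Psi_0^\lambda$ is already $\boC^2$ by Lemma~\ref{lem:sol-psi0}, and iterating interior Schauder estimates on the equation satisfied by $W^\lambda$, whose right-hand side becomes smooth once $\Psi_0^\lambda$ and $W^\lambda$ are continuous, upgrades $W^\lambda$ to $\boC^\infty(\Omega_d)$. For the smoothness of $\lambda\mapsto(\Psi_0^\lambda,W^\lambda)$ I would invoke the uniform contraction principle: since $\Xi_k$ depends smoothly on $(\psi_0,w,\lambda)$, its fixed point depends smoothly on $\lambda$ into $W_{0,\sigma}^{1,\infty}(\R)\times(\boL_\sigma^\infty(\Omega_d)\cap H_k)$, and equations~\eqref{eq:proj-syst-0}--\eqref{eq:proj-syst-1} bootstrap this to $W_{0,\sigma}^{2,\infty}(\R)\times W_{0,\sigma}^{2,\infty}(\Omega_d)$, while $1-|\Psi_0^\lambda|^2=(1-S_0^2)+\eta_0^\lambda$ with $\eta_0^\lambda$ smooth in $L_\sigma^\infty(\R)\hookrightarrow L^2(\R)$ gives the last assertion. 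The one ingredient not already packaged in the lemmas is the smooth — rather than merely Lipschitz — dependence of the variational solution $\psi_0$ on $f_0$, which I would obtain by an implicit function theorem for equation~\eqref{eq:psi-0}, based on the invertibility of the linearized operator $L_0$ on the symmetry-restricted spaces (its kernel, spanned by $S_0'$ and $iS_0$, being killed by the oddness of the real part, the evenness of the imaginary part and the constraint at the origin); this simultaneously re-establishes the local uniqueness of $\psi_0$.

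The main obstacle, I expect, is the contraction estimate of the second task: it forces one to thread Lemmas~\ref{lem:non-linearity},~\ref{lem:control-exp-V-N} and~\ref{lem:invert-T-k} together while carefully bookkeeping the three weights $\tau<\sigma<2\sigma\leq\sigma_0$, and in particular confirming that the weight gained in the nonlinearities exactly offsets the weight lost in inverting the projected operator, so that the whole scheme closes in $L_\sigma^\infty$. A secondary difficulty is promoting the dependence $\lambda\mapsto(\Psi_0^\lambda,W^\lambda)$ from Lipschitz to smooth, which is why the implicit function theorem for~\eqref{eq:psi-0} must be added to the tools assembled in this section; the case $\sigma=0$ is handled by working with an auxiliary positive weight and then embedding back into $L^\infty$.
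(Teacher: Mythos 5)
Your existence-and-uniqueness argument is essentially the one the paper carries out: a Picard iteration for $\Xi_k(\cdot,\cdot,\lambda)$ built by composing the nonlinearities of Lemma~\ref{lem:non-linearity} with the solution maps of Lemmas~\ref{lem:sol-psi0}--\ref{lem:control-exp-V-N} and Lemma~\ref{lem:invert-T-k}, with the weight bookkeeping $\sigma\to 2\sigma\to\sigma$ closing the loop, the term $\lambda T(\chi_k)$ moved to the right-hand side to stay in $H_k$, and uniqueness read off from the uniqueness clauses of the lemmas. That part is sound.

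The genuine gap is in the smoothness in $\lambda$, and it is precisely the point your proposed ``implicit function theorem for equation~\eqref{eq:psi-0} on the symmetry-restricted spaces'' does not address. The set $\boY_\sigma^\infty(\R)$ is \emph{not} an open subset of the Banach space $W_{0,\sigma}^{1,\infty}(\R)$: its elements have limits of modulus one, $e^{i\theta}$, at infinity, and this phase varies with $\lambda$. Consequently $\varepsilon=\Psi_0^\lambda-S_0$ does not decay (Lemma~\ref{lem:exp-dec-eta-0} explicitly records that $\varepsilon_0$ is controlled only in $L^\infty$, not $L_\sigma^\infty$), and the map $\psi\mapsto \psi''+\psi(1-|\psi|^2)$ is not even well defined with values in $L_\sigma^\infty(\R)$ on the affine space $S_0+W_{0,\sigma}^{1,\infty}(\R)$: for a generic bounded perturbation, $1-|S_0+\varepsilon|^2$ tends to $\mp 2\,\mathrm{Re}(\varepsilon_\pm)-|\varepsilon_\pm|^2\neq 0$ at $\pm\infty$. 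So there is no linear space on which to linearize, and the invertibility of $L_0$ on decaying, symmetry-restricted functions is not the relevant statement. The paper resolves this by introducing the one-parameter family $S_\theta$ of phase-rotated solitons and the chart $(\theta,\varepsilon)\mapsto S_\theta+\varepsilon$ with $\varepsilon\in\boL_\sigma^\infty(\R)$ genuinely decaying; the maps $B_1\circ A$, $B_2\circ A$ and $\boC=C\circ A$ are then smooth between Banach spaces in the variables $(\theta,\varepsilon,w)$, the Lipschitz invertibility supplied by Lemmas~\ref{lem:control-exp-V-N} and~\ref{lem:invert-T-k} upgrades to smooth invertibility of $\boB$, and the parametrized Picard theorem applied to $\Xi_\lambda=\boB^{-1}\circ\boC(\cdot,\cdot,\cdot+\lambda\chi_k)$ yields the smooth branch. (This also requires the Lipschitz invertibility of $A$ itself, i.e.\ recovering $\theta$ from $\psi$ via $e^{i\theta}=\int_0^{+\infty}\psi'$.) Without this chart, or an equivalent finite-dimensional reduction of the asymptotic phase, your smoothness argument does not close; I would also note that the same issue makes the completeness of your closed ball in the metric space $\boY_\sigma^\infty(\R)$ something to verify rather than assume.
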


Equation~\eqref{eq:proj-syst-1} may be written
$$
\pi_k \Big( T \big( W^\lambda + \lambda \chi_k \big) - g \big( \Psi_0^\lambda, W^\lambda + \lambda \chi_k \big) \Big) = 0,
$$
where $\pi_k$ is defined in~\eqref{def:pi-k}, hence $W^\lambda+\lambda\chi_k$ satisfies the second equation in~\eqref{eq:sys-0-w} if and only if $\langle T(W^\lambda+\lambda\chi_k),\chi_k\rangle_{L^2} = 0$. Therefore, the function $\Psi^\lambda = \Psi_0^\lambda + W^\lambda + \lambda \chi_k$ is a solution to~\eqref{eq:GL} whenever the quantity
\begin{equation}
\label{eq:Jdl}
\begin{split}
J(d, \lambda) := \big\langle - \Delta \big( W^\lambda + \lambda \chi_k \big) & - \big( W^\lambda + \lambda \chi_k \big) \big( 1 - |\Psi_0^\lambda|^2 \big) \\
& + 2 \, \langle \Psi_0^\lambda, W^\lambda + \lambda \chi_k \rangle_\C \, \Psi_0^\lambda - g\big( \Psi_0^\lambda, W^\lambda + \lambda \chi_k \big), \chi_k \big\rangle_{L^2(\Omega_d)},
\end{split}
\end{equation}
vanishes. For $d_k - \delta_1 < d < d_k + \delta_1$ and $\lambda = 0$, the function $\Psi_0$ is, by uniqueness of the fixed point, equal to the black soliton $S_0$, so that
$$
J(d, 0) = 0.
$$
Invoking this property, it is natural to implement a perturbative argument in order to describe locally the set $\boZ$ of widths $d$ and of parameters $\lambda$ for which the function $J$ vanishes. Note that this is the reason why we make the dependence on $d$ of the function $J$ explicit. In practice, applying such a perturbative argument requires to establish first some smoothness for the function $J$. This can be done by invoking some fixed-point theorem with parameters. In this direction, we show

\begin{lemma}
\label{lem:diff-I}
Let $k \geq 1$ and $0 < \sigma \leq \sigma_0/2$ be fixed as in Proposition~\ref{prop:exist-proj-sol}. There exist two numbers $0 < \delta_2 \leq \delta_1$ and $0 < \rho_2 \leq \rho_1$ such that the function $J$ defined by~\eqref{eq:Jdl} is smooth on $(d_k - \delta_2, d_k + \delta_2) \times (- \rho_2, \rho_2)$, and odd in its second variable $\lambda$. Moreover, the following holds:
\begin{equation}
\label{eq:der-equal-0}
J(d_k, 0) = \partial_d J(d_k, 0) = \partial_\lambda J(d_k, 0) = \partial_{d, d} J(d_k, 0) = \partial_{\lambda, \lambda} J(d_k, 0) = 0,
\end{equation}
and
\begin{equation}
\label{eq:der-neq-0}
\partial_{d, \lambda} J(d_k, 0) = - 2 \sqrt{2}, \quad \text{ and } \quad \partial_{\lambda, \lambda, \lambda} J(d_k, 0) := \omega \, d_k,
\end{equation}
where $\omega > 0$ is a universal constant.
\end{lemma}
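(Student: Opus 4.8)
The overall plan is to combine three ingredients: the smooth dependence on $(d,\lambda)$ of the fixed point constructed in Proposition~\ref{prop:exist-proj-sol}, the complex-conjugation symmetry $\psi\mapsto\bar\psi$ of the construction, and the explicit solvability of the linear equations governing the first two Taylor coefficients in $\lambda$ of $(\Psi_0^\lambda,W^\lambda)$. First I would re-run the contraction argument of Proposition~\ref{prop:exist-proj-sol} with both $d$ and $\lambda$ as parameters; the only genuinely new point is the dependence on $d$, which is handled by rescaling the transverse variable $y\mapsto(d_k/d)y$ (equivalently, working Fourier mode by Fourier mode), turning the problem on the moving strip $\Omega_d$ into one on the fixed strip $\Omega_{d_k}$ whose coefficients — the numbers $\pi^2j^2/d^2$ in $T_j$, the function $\cos(\pi ky/d)$, the inner product — depend analytically on $d$; a fixed-point theorem with parameters then gives $(d,\lambda)\mapsto(\Psi_0^\lambda,W^\lambda)$ smooth near $(d_k,0)$, hence $J$ smooth. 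For the oddness, conjugation commutes with everything: $\overline{S_0}=S_0$, $\overline{\chi_k}=-\chi_k$, $\pi_k(\bar\psi)=\overline{\pi_k(\psi)}$, and $g(\bar\psi_0,\bar w)=\overline{g(\psi_0,w)}$, so $(\overline{\Psi_0^\lambda},\overline{W^\lambda})$ solves the system of Proposition~\ref{prop:exist-proj-sol} with parameter $-\lambda$; by the uniqueness there, $\Psi_0^{-\lambda}=\overline{\Psi_0^\lambda}$ and $W^{-\lambda}=\overline{W^\lambda}$. Inserting this in~\eqref{eq:Jdl} and using $\langle\bar\varphi,\chi_k\rangle_{L^2(\Omega_d)}=-\langle\varphi,\chi_k\rangle_{L^2(\Omega_d)}$ (because $\overline{\chi_k}=-\chi_k$) gives $J(d,-\lambda)=-J(d,\lambda)$.

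\emph{The vanishing derivatives.} For $\lambda=0$ the fixed point is $(S_0,0)$ and $g(S_0,0)=0$, so~\eqref{eq:Jdl} gives $J(d,0)=0$ for all $d$ near $d_k$; differentiating in $d$ kills $\partial_dJ(d_k,0)$ and $\partial_{d,d}J(d_k,0)$, while oddness makes $\partial_{\lambda,\lambda}J(d,\cdot)$ odd, so $\partial_{\lambda,\lambda}J(d_k,0)=0$. Differentiating~\eqref{eq:Jdl} once in $\lambda$ at $\lambda=0$ I would use: $\partial_\lambda\Psi_0^\lambda|_0$ solves $L_0(\partial_\lambda\Psi_0^\lambda|_0)=0$ and lies in the tangent space to $\boX_0(\R)$ at $S_0$, which meets $\ker L_0=\mathrm{span}_\R(S_0',iS_0)$ only at $0$ for parity reasons, so $\partial_\lambda\Psi_0^\lambda|_0=0$; $\partial_\lambda W^\lambda|_0\in H_k$ solves $\pi_k(L_0(\partial_\lambda W^\lambda|_0))=0$, hence $=0$ by Lemma~\ref{lem:invert-T-k}$(i)$; and $g$ vanishes to second order at $(S_0,0)$. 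With the self-adjointness of $T$, the identity $L_0\chi_k=(\pi^2k^2/d^2-\tfrac12)\chi_k$ (from $L_0^-\chi_0=-\tfrac12\chi_0$, still writing $L_0$ for the associated two-variable operator), and $\langle\partial_\lambda W^\lambda|_0,\chi_k\rangle=0$, this yields
$$
\partial_\lambda J(d,0)=\Big(\frac{\pi^2k^2}{d^2}-\frac12\Big)\,\|\chi_k\|_{L^2(\Omega_d)}^2=2\sqrt2\,\Big(\frac{\pi^2k^2}{d}-\frac{d}{2}\Big),
$$
using $\|\chi_k\|_{L^2(\Omega_d)}^2=\|\chi_0\|_{L^2(\R)}^2\,d=2\sqrt2\,d$. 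This vanishes at $d=d_k=\sqrt2\pi k$, and its $d$-derivative at $d_k$ equals $2\sqrt2(-\pi^2k^2/d_k^2-\tfrac12)=-2\sqrt2=\partial_{d,\lambda}J(d_k,0)$.

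\emph{The third derivative.} This is the core computation; I would carry it out at $d=d_k$ by Taylor expansion in $\lambda$. From the equation for $\Psi_0^\lambda$, $\Psi_0^\lambda=S_0+\lambda^2P_2+O(\lambda^3)$ with $L_0^+P_2=-\tfrac12S_0\chi_0^2=\tfrac12S_0''$, solved explicitly by $P_2=-\tfrac14xS_0'$ since $L_0^+(xS_0')=-2S_0''$ (check using $L_0^+S_0'=0$). From the equation for $W^\lambda$, $W^\lambda=\lambda^2W_2+O(\lambda^3)$ (the $\lambda^1$ term vanishes by Lemma~\ref{lem:invert-T-k}$(i)$ as above), with $W_2=u_2(x)\cos(2\pi ky/d)$ purely in the $2k$-th Fourier mode and $(L_0^++2)u_2=-\tfrac12S_0\chi_0^2=\tfrac12S_0''$, uniquely solvable because $L_0^++2\geq2I$. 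Collecting the coefficient of $\lambda^3$ in $J(d_k,\lambda)$ — the terms with $W_3$ drop out by self-adjointness of $T$ and $L_0\chi_k=0$ at $d_k$ — one gets
$$
\tfrac16\,\partial_{\lambda,\lambda,\lambda}J(d_k,0)=\langle M(\chi_k),\chi_k\rangle-2\,\langle B(\chi_k,W_2),\chi_k\rangle-\langle g_3(\chi_k),\chi_k\rangle,
$$
where $M=\tfrac12\partial_\lambda^2T|_0$ (so $M(\chi_k)=2S_0P_2\chi_k$, the other terms vanishing by parity), $B$ is the polarisation of the $w$-quadratic part of $g$ at $\psi_0=S_0$, and $g_3$ its $w$-cubic part. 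Each bracket is an explicit integral, evaluated with $S_0''=-S_0\chi_0^2$, $(S_0')^2=\tfrac12\chi_0^4$, the parities of $S_0,S_0',\chi_0$, and the values of $\int_\R\mathrm{sech}^{2n}$: one finds $\langle M(\chi_k),\chi_k\rangle=-\tfrac{\sqrt2}{6}d_k$, $\langle g_3(\chi_k),\chi_k\rangle=-\sqrt2\,d_k$, and $\langle B(\chi_k,W_2),\chi_k\rangle=\tfrac{d_k}{2}\int_\R u_2S_0''$, so that
$$
\omega=5\sqrt2-6\int_\R u_2\,S_0''=5\sqrt2-3\,\big\langle(L_0^++2)^{-1}S_0'',S_0''\big\rangle_{L^2(\R)}.
$$
The positivity of $\omega$ follows from $(L_0^++2)^{-1}\leq\tfrac12I$: this gives $\int_\R u_2S_0''\leq\tfrac14\|S_0''\|_{L^2(\R)}^2=\tfrac{\sqrt2}{15}$, whence $\omega\geq5\sqrt2-\tfrac{6\sqrt2}{15}=\tfrac{23\sqrt2}{5}>0$.

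\emph{Main obstacle.} I expect the third-derivative step to be the hard part: one must see that only $P_2$ and $W_2$ contribute at order $\lambda^3$ (which rests on $\partial_\lambda\Psi_0^\lambda|_0=\partial_\lambda W^\lambda|_0=0$, the self-adjointness of $T$, and $L_0\chi_k=0$ at $d_k$), solve for them, and above all control the sign of $\omega$ — for which the explicit formula $P_2=-\tfrac14xS_0'$ and the coercivity $L_0^++2\geq2I$ are decisive. By comparison the smoothness is routine but tedious, and the remaining derivative identities are forced by the oddness of $J$ together with the formula for $\partial_\lambda J(d,0)$.
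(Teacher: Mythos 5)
Your proposal follows essentially the same route as the paper: smoothness via the rescaling $y\mapsto (d_k/d)\,y$ and a fixed point with parameters, oddness via the conjugation symmetry and uniqueness, the vanishing of the first-order quantities via the parity/kernel arguments for $\partial_\lambda\Psi_0^\lambda|_0$ and $\partial_\lambda W^\lambda|_0$, and the third derivative via the explicit second-order Taylor coefficients $P_2$ and $W_2$ (your $P_2=-\tfrac14 xS_0'$ and $(L_0^++2)u_2=\tfrac12 S_0''$ coincide with the paper's $\partial_{\lambda\lambda}\Psi_0^*=-\tfrac12 xS_0'$ and $(L_0^++2)v=S_0''$ after accounting for the factor $\tfrac12$ between Taylor coefficients and second derivatives). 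Two points of genuine difference are worth recording. First, you obtain $\partial_{d,\lambda}J(d_k,0)$ by computing the explicit function $d\mapsto\partial_\lambda J(d,0)=(\pi^2k^2/d^2-\tfrac12)\|\chi_k\|_{L^2(\Omega_d)}^2$ and differentiating it; this is cleaner than the paper's direct differentiation of the rescaled expression, though it requires knowing $\partial_\lambda\Psi_0^\lambda|_{(d,0)}=\partial_\lambda W^\lambda|_{(d,0)}=0$ for all $d$ near $d_k$ (which your appeal to Lemma~\ref{lem:invert-T-k}$(i)$ and the parity argument does supply). Second, your value $\omega=5\sqrt2+3\int_\R S_0\,v\,\chi_0^2$ (after translating $-6\int u_2S_0''=3\int S_0 v\chi_0^2$) differs from the paper's $\omega=\tfrac{33}{4}\int\chi_0^4+3\int S_0v\chi_0^2=11\sqrt2+3\int S_0v\chi_0^2$. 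Re-deriving the $\lambda^3$ Taylor coefficient of $J(d_k,\cdot)$ from scratch, the purely quartic term enters with coefficient $\int_{\Omega_{d_k}}|\chi_k|^4$, not $2\int_{\Omega_{d_k}}|\chi_k|^4$: the chain rule produces $-D_{2,2,2}g(S_0,0)(\chi_k,\chi_k,\chi_k)$ with coefficient $-1$, whereas the paper's formula~\eqref{iddd} carries a $-2$. So your constant appears to be the correct one. This does not affect the lemma as stated — both values give $\omega>0$ (your sharper bound via $L_0^+\geq0$, or even the paper's cruder Cauchy--Schwarz bound $|\int S_0v\chi_0^2|\leq\int\chi_0^4$, which still yields $\omega\geq\sqrt2>0$ with the constant $5\sqrt2$) — but it would change the numerical values of the universal constants $\Lambda$ and $\boE$ downstream. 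One harmless omission: you should note that the $\lambda^3$ coefficient $P_3$ of $\Psi_0^\lambda$ also drops out of the order-$\lambda^3$ balance, which follows because $P_3$ is purely imaginary (by the conjugation symmetry) so that $\langle S_0,P_3\rangle_\C=0$.
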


Invoking Lemma~\ref{lem:diff-I}, we are in position to apply the Morse lemma in order to characterize the vanishing set $\boZ$ of the function $J$ in the neighbourhood of the point $(d_k, 0)$. In view of~\eqref{eq:der-equal-0} and~\eqref{eq:der-neq-0}, this set is locally diffeomorphic to two secant lines. This property eventually provides the existence of two smooth branches of solutions to~\eqref{eq:GL}, the first one corresponding to the black solitons. The final point in the proof of Theorem~\ref{thm:main} is to establish that the second one provides truly two-dimensional solutions on strips with widths close to the critical width $d_k$.

%%%%%%%%%%%%%%%%%%%%%%%%%%%%%%%%%%%
%%%%%%%%%%%%%%%%%%%%%%%%%%%%%%%%%%%
\subsection{End of the proof of Theorem~\ref{thm:main}}
%%%%%%%%%%%%%%%%%%%%%%%%%%%%%%%%%%%
%%%%%%%%%%%%%%%%%%%%%%%%%%%%%%%%%%%

Our starting point is the fact that the black soliton $S_0$ is a solution to~\eqref{eq:GL} in any strip $\Omega_d$. As a result, the pair $(S_0, 0)$ is a solution of equations~\eqref{eq:proj-syst-0} and~\eqref{eq:proj-syst-1}. By the uniqueness of this solution in Proposition~\ref{prop:exist-proj-sol}, we infer that $\Psi_0^0 = S_0$ and $W^0 = 0$ for $d_k - \delta_1 < d < d_k + \delta_1$. This guarantees that
$$
J(d, 0) = 0,
$$
for any $d_k - \delta_1 < d < d_k + \delta_1$. We can therefore write the function $J$ as
\begin{equation}
\label{def:boJ}
J(d, \lambda) = \lambda \, \boJ(d, \lambda) := \lambda \int_0^1 \partial_\lambda J(d, \lambda t) \, dt,
\end{equation}
where, invoking Lemma~\ref{lem:diff-I}, the function $\boJ$ is smooth on $(d_k - \delta_1, d_k + \delta_1) \times (- \rho_2, \rho_2)$. Using Lemma~\ref{lem:diff-I} again, we compute
$$
\boJ(d_k, 0) = \partial_\lambda J(d_k, 0) = 0, \quad \text{ and } \quad \partial_d \boJ(d_k, 0) = \partial_{d, \lambda} J(d_k, 0) = - 2 \sqrt{2} < 0,
$$
Applying the implicit function theorem, we can find two numbers $0 < \delta_2 < \delta_1$ and $0 < \rho_3 < \rho_2$, and a smooth function $\bd : (- \rho_3, \rho_3) \to (d_k - \delta_2, d_k + \delta_2)$ such that, for any $d_k - \delta_2 < d < d_k + \delta_2$ and any $- \rho_3 < \lambda < \rho_3$,
\begin{equation}
\label{eq:equiv-d-lambda}
\boJ(d, \lambda) = 0 \Longleftrightarrow d = \bd(\lambda).
\end{equation}
In view of~\eqref{def:boJ}, this equivalence means that the intersection of the vanishing set $\boZ$ of the function $(d,\lambda)\to J(d,\lambda)$ with the subset $(d_k - \delta_2, d_k + \delta_2) \times (- \rho_3, \rho_3)$ is the union of the smooth curves $\lambda = 0$ and $d = \bd(\lambda)$, which intersect at the point $(d_k, 0)$.

Concerning the function $\bd$, we first recall that the function $J$ is odd with respect to the variable $\lambda$. Hence it follows from~\eqref{def:boJ} and~\eqref{eq:equiv-d-lambda} that $\bd$ is an even function. In particular, its derivative $\bd'(0)$ is equal to $0$. Observing that
$$
\partial_{\lambda, \lambda} \boJ(d, \lambda) = \int_0^1 \partial_{\lambda, \lambda, \lambda} J(d, \lambda t) \, t^2 \, dt,
$$
we deduce from applying the chain rule to the identity $\boJ(\bd(\lambda), \lambda) = 0$ that
\begin{equation}
\label{eq:d-der-twice}
\bd''(0) = \frac{\omega d_k}{6 \sqrt{2}} > 0,
\end{equation}
where $\omega$ is a universal constant, which comes from~\eqref{eq:der-neq-0}. As a consequence, we can reduce the value of the number $\rho_3$ (if necessary) so that
$$
\bd'(\lambda) > 0,
$$
for any $0 < \lambda < \rho_3$. The function $\bd$ is then smoothly invertible from $(0, \lambda_3)$ onto its image $(d_k, d_k + \ell_k)$, and its inverse $\bm{\lambda} := \bd^{- 1}$ satisfies
$$
J \big( d, \bm{\lambda}(d) \big) = 0,
$$
for any $d_k < d < d_k + \ell_k$. Note here that this inverse is actually continuous on $[d_k, d_k + \ell_k)$. Note also in view of~\eqref{eq:d-der-twice}, that $\bd(\lambda) - d_k \sim \omega d_k \, \lambda^2/(12 \sqrt{2})$ as $\lambda$ goes to $0$, so that
\begin{equation}
\label{eq:equiv-lambda}
\bm{\lambda}(d) \sim \sqrt{\frac{12 \sqrt{2} (d_k - d)}{\omega d_k}},
\end{equation}
as $d \to d_k$.

It then follows from Proposition~\ref{prop:exist-proj-sol} that the functions
\begin{equation}
\label{def:Psi-d}
\Psi_{k, d} := \Psi^{\bm{\lambda}(d)} := \Psi_0^{\bm{\lambda}(d)} + W^{\bm{\lambda}(d)} + \bm{\lambda}(d) \, \chi_k,
\end{equation}
are solutions to~\eqref{eq:GL} on $\Omega_d$. Concerning the smoothness of the map $d \mapsto \Psi_{k, d}$, we have to deal with the property that they are not defined on the same strips. We settle this difficulty by applying the change of variables $z = d_k y/d$, and introducing the rescaled functions
$$
\tilde{\Psi}_{k, d}(x, z) := \Psi_{k, d} \Big( x, \frac{d \, z}{d_k} \Big),
$$
which are all defined in the strip $\Omega_{d_k}$. Letting similarly $\tilde{\Psi}_0^d(x) := \Psi_0^{\bm{\lambda}(d)}(x)$ and $\tilde{w}^d(x, z) := W^{\bm{\lambda}(d)}(x, d z/d_k) + \bm{\lambda}(d) \, \chi_k(x, d z/d_k)$, we derive from Proposition~\ref{prop:exist-proj-sol} and the fact that the function $\bm{\lambda}$ is smooth that the map $d \mapsto (\tilde{\Psi}_0^d, \tilde{w}^d)$ is smooth from $(d_k, d_k + \ell_k)$ with values into $(\boY_\sigma^\infty(\R) \cap \boW_{0, \sigma}^{2, \infty}(\R)) \times \boW_{0, \sigma}^{2, \infty}(\Omega_{d_k})$ for some number $0 < \sigma < \sigma_0/2$. Since the function $\bm{\lambda}$ remains continuous when $d \to d_k$, the smoothness of this map on $(d_k, d_k + \ell_k)$ extends to a continuity property on $[d_k, d_k + \ell_k)$.

We now conclude the proof of Theorem~\ref{thm:main} by observing that the restrictions of the functions $\Psi_{k, d}$ to the half-strip $\R \times (0, d)$ remain solutions to~\eqref{eq:GL}, but with Neumann boundary conditions. Note also that these restrictions are in $\boX \cap W_\sigma^{2, \infty}$, with a smooth dependence on $d$ in view of the previous analysis of the map $d \mapsto (\tilde{\Psi}_0^d, \tilde{w}^d)$. The fact that $S_0$, $\Psi_{k, d}$ and $\overline{\Psi_{k, d}}$ are the only solutions of the equation in a neighbourhood of $S_0$ in $\boX \cap W_\sigma^{2, \infty}$, follows from the previous description of the vanishing set $\boZ$, which is diffeomorphic to two secant lines. The solutions $\overline{\Psi_{k, d}}$ correspond to the case of negative values for the number $\lambda$.

Note finally that we can derive from~\eqref{ddlambdapsi} and~\eqref{ddlambdaW} that the functions $\Psi_0^\lambda$ and $W^\lambda$ in Proposition~\ref{prop:exist-proj-sol} satisfy, as $d \to d_k$ and $\lambda \to 0$,
\begin{equation}
\label{eq:DL-Psi0-W}
\Psi_0^\lambda = S_0 + \boO \big( |d - d_k|^2 + |\lambda|^2 \big), \quad \text{ and } \quad W^\lambda = \boO \big( |d - d_k|^2 + |\lambda|^2 \big),
\end{equation}
these convergences holding in $W_{0, \sigma}^{2, \infty}(\R)$, respectively $W_{0, \sigma}^{2, \infty}(\Omega_d)$. In view of~\eqref{eq:equiv-lambda} and~\eqref{def:Psi-d}, this can be rephrased as the fact that
\begin{equation}
\label{eq:exp-Psi-d}
\Psi_{k, d}(x, y) = S_0(x) + i \sqrt{\frac{12 \sqrt{2} (d - d_k)}{\omega d_k}} \, \chi_0(x) \, \cos \Big( \frac{\pi k y}{d} \Big) + \boO \big( |d - d_k| \big),
\end{equation}
as $d \to d_k$, these convergences holding in particular in $W^{2, \infty}(\R \times (0,d))$. This is exactly the asymptotic description of the solutions $\Psi_{k, d}$ in Theorem~\ref{thm:main}, with $\Lambda := \sqrt{12 \sqrt{2}/\omega}$. Finally we use these asymptotics in order to expand the Ginzburg-Landau energy $E(\Psi_{k, d})$ as in~\eqref{eq:DL-E}. We refer to Subsection~\ref{sub:DL-E} below for the detailed computations. This concludes the proof of Theorem~\ref{thm:main}. \qed

%%%%%%%%%%%%%%%%%%%%%%%%%%%%
%%%%%%%%%%%%%%%%%%%%%%%%%%%%
\subsection{Proof of Corollary~\ref{thm:main}}
%%%%%%%%%%%%%%%%%%%%%%%%%%%%
%%%%%%%%%%%%%%%%%%%%%%%%%%%%

The first step in the proof is to describe the possible vortices of the solution $\Psi_{k, d}$ in the regime in which $d$ is close to $d_k$. This description is based on the expansion in~\eqref{eq:exp-Psi-d}. Recall that these asymptotics hold in $W^{2, \infty}(\Omega_d)$, hence are uniform in $\Omega_d$, and the corresponding expansions for the derivatives with respect to $x$ and $y$ also hold uniformly in $\Omega_d$.

Note also that the function $\chi_0$ takes positive values, so that the first two terms in the right-hand side of~\eqref{eq:exp-Psi-d} vanish if and only if $x = 0$ and $y = (2 j + 1) d/{2 k}$, with $- k \leq j \leq k - 1$. In the limit $d \to d_k$, by uniform convergence, the function $\Psi_{k, d}$ cannot vanish except in the neighbourhood of the corresponding $2 k$ points $(0, (2 j + 1) d/{2 k})$. Moreover, the Jacobian matrix of the function $\Psi_{k, d}$ uniformly satisfies the asymptotics
\begin{equation}
\label{eq:Jacob-Psi}
D \Psi_{k, d}(x, y) = \begin{pmatrix} S_0'(x) & 0 \\ \Lambda \sqrt{\frac{d - d_k}{d_k}} \chi_0'(x) \cos \Big( \frac{\pi k y}{d} \Big) & - \frac{k \pi \Lambda}{d} \sqrt{\frac{ d - d_k}{d_k}} \chi_0(x) \sin \Big( \frac{\pi k y}{d} \Big) \end{pmatrix} + \boO \big( |d - d_k| \big).
\end{equation}
As a consequence, it is invertible in the neighbourhoods of these $2 k$ points. Invoking the inverse function theorem is then sufficient to guarantee that the function $\Psi_{k, d}$ has exactly $2 k$ zeroes for $d$ close to $d_k$, which are located close to the $2 k$ points $(0, (2 j + 1) d/{2 k})$, $- k \leq j \leq k - 1$. Note here that due to the symmetry properties of the function $\Psi_{k, d}$ the horizontal component of these zeroes is exactly $0$. Note also that the fact that these zeroes have alternate degrees $\pm 1$ follows from the uniform asymptotic expansion in~\eqref{eq:exp-Psi-d} and~\eqref{eq:Jacob-Psi}.

Restricting our attention to the restriction of the function $\Psi_{1, d}$ to the strip $\R \times (0, d)$, we first claim that the ordinate of its unique zero is exactly $d/2$. Consider indeed the function $R \Psi_{1, d}$ given by~\eqref{def:R}. This function is by construction a solution to~\eqref{eq:GL} with Neumann boundary conditions. Moreover it remains in the neighbourhood of $S_0$ in $\boX \cap W_\sigma^{2, \infty}$, in which the unique solutions are $S_0$, $\Psi_{1, d}$ and $\overline{\Psi}_{1, d}$. Since $R \Psi_{1, d}$ has a unique zero in the strip $\R \times (0, d)$, this function is either equal to $\Psi_{1, d}$, or to $\overline{\Psi}_{1, d}$. By uniqueness, the zero of $R \Psi_{1, d}$ is moreover equal to the one of $\Psi_{1, d}$ and $\overline{\Psi}_{1, d}$. However, the ordinate of the zero of $R \Psi_{1, d}$ must be equal to $d - y_0$, if $y_0$ is the ordinate of the zero of $\Psi_{1, d}$. Hence we have $d - y_0 = y_0$, so that
$$
y_0 = \frac{d}{2}.
$$

In view of~\eqref{eq:exp-Psi-d}, we next check that the zero of the function $\Psi_{1, d}$ has degree $- 1$. Concerning the degree of the zero of the function $R \Psi_{1, d}$, it is by construction equal to the one of the function $\Psi_{1, d}$, whereas the zero of $\overline{\Psi}_{1, d}$ has opposite degree. As a conclusion, the function $R \Psi_{1, d}$ is necessarily equal to the function $\Psi_{1, d}$.

Using this property, we can define the function $\Upsilon_{1, d/k}$ for $k \geq 2$ and $d_k < d < d_k + k \ell_1$ according to~\eqref{def:tilde}, and check that it remains a solution to~\eqref{eq:GL} with Neumann boundary conditions. When $d_k < d < d_k + \ell_k$, this function is in the neighbourhood of $S_0$ in $\boX \cap W_\sigma^{2, \infty}$ in which the unique solutions are $S_0$, $\Psi_{k, d}$ and $\overline{\Psi}_{k, d}$. Considering as before the location and the degree of the zeros of $\Upsilon_{1, d/k}$, we check that this function is equal to the function $\Psi_{k, d}$. As a consequence, the zeroes of this function are located at the points $(0, d_k (2 j + 1)/(2 k))$ for $0 \leq j < k$ and their degrees are equal to $(- 1)^{j + 1}$. This completes the proof of Corollary~\ref{cor:main}. \qed

%%%%%%%%%%%%%%%%%%%%
%%%%%%%%%%%%%%%%%%%%
\subsection{Outline of the paper}
%%%%%%%%%%%%%%%%%%%%
%%%%%%%%%%%%%%%%%%%%

In the next sections, we provide the detail of the proofs of the various lemmas and propositions stated in the previous sketch of the proof of Theorem~\ref{thm:main}. Section~\ref{sec:psi-0} is devoted to the resolution of the first equation in~\eqref{eq:sys-0-w}, and more precisely, to the proof of Lemmas~\ref{lem:coer-S0},~\ref{lem:sol-psi0},~\ref{lem:exp-dec-eta-0},~\ref{lem:control-V-N} and~\ref{lem:control-exp-V-N}. Section~\ref{sec:psi-neq-0} deals with the invertibility properties of the operator $T$ given by Lemma~\ref{lem:invert-T-k}. In Section~\ref{sec:fixed-point} are gathered the proofs of Lemma~\ref{lem:non-linearity} and Proposition~\ref{prop:exist-proj-sol} concerning the main fixed point argument. Finally, Section~\ref{sec:diff-I} provides the detail of the proof of Lemma~\ref{lem:diff-I} regarding the differentiability properties of the function $J$, and of the computation of the expansion of the energy $E(\Psi_{k, d})$ as in~\eqref{eq:DL-E}.

%%%%%%%%%%%%%%%%%%%%%%%%%%
%%%%%%%%%%%%%%%%%%%%%%%%%%
%%%%%%%%%%%%%%%%%%%%%%%%%%
\numberwithin{equation}{section}
\section{Analysis in the zero Fourier sector}
\label{sec:psi-0}
%%%%%%%%%%%%%%%%%%%%%%%%%%
%%%%%%%%%%%%%%%%%%%%%%%%%%
%%%%%%%%%%%%%%%%%%%%%%%%%%

In this first section, we collect the proofs of the results related to the zero Fourier sector, that is dealing with the function $\psi_0$.

%%%%%%%%%%%%%%%%%%%%%%%%%%%%%
%%%%%%%%%%%%%%%%%%%%%%%%%%%%%
\subsection{Proof of Lemma~\ref{lem:coer-S0}}
\label{sub:proof-coer}
%%%%%%%%%%%%%%%%%%%%%%%%%%%%%
%%%%%%%%%%%%%%%%%%%%%%%%%%%%%

The proof of the coercivity estimate in~\eqref{eq:coer-S0} is reminiscent to the proof of~\cite[Proposition 1]{GravSme1}. Consider a function $\psi \in \boX_0(\R)$, and set $\varepsilon = \psi - S_0$ and $\eta = 1 - |\psi|^2 - (1 - S_0^2) = - 2 \langle S_0, \varepsilon \rangle_\C - |\varepsilon|^2$ as in the statement of Lemma~\ref{lem:coer-S0}. Since $S_0$ is a critical point of the energy $\goE$, we can expand the energy $\goE(\psi)$ as
\begin{equation}
\label{eq:decomp-E}
\goE(\psi) = \goE(S_0) + Q_0(\varepsilon_1) + Q_0(\varepsilon_2) + \frac{1}{4} \int_\R \eta^2,
\end{equation}
where the quadratic form $Q_0$ is given by
$$
Q_0(f) = \frac{1}{2} \int_\R \Big( (f')^2 - (1 - S_0^2) f^2 \Big).
$$
When $\psi$ belongs to the set $\boX_0(\R)$, the real part $\varepsilon_1$ and the imaginary part $\varepsilon_2$ of the function $\varepsilon$ are odd, respectively even, and both of them are in the space
$$
\goH_0(\R) := \Big\{ f \in \boC^0(\R, \R) \text{ s.t. } f(0) = 0 \text{ and } \| f \|_{\goH(\R)} < + \infty \Big\},
$$
where $\| f \|_{\goH(\R)}$ is defined in~\eqref{def:H0}.
By the Sobolev embedding theorem, this vector space is a Hilbert space for its natural norm $\| \, \|_{\goH(\R)}$, and the quadratic form $Q_0$ is well-defined and continuous on it. In particular, we can define a self-adjoint operator $\boQ_0$ on $\goH_0(\R)$ such that
$$
Q_0(f) = \langle \boQ_0(f), f \rangle_{\goH(\R)},
$$
for any function $f \in \goH_0(\R)$. Arguing as in the proof of~\cite[Proposition 1]{GravSme1}, we can check that this operator can be written as $\boQ_0 = I/2 - K_0$, where $K_0$ is the self-adjoint non-negative compact operator defined by
$$
\langle K_0(f), g \rangle_{\goH(\R)} = \int_\R (1 - S_0^2) f g,
$$
for any functions $(f, g) \in \goH_0(\R)^2$. As a consequence, we can apply the spectral theorem in order to find a non-decreasing sequence of eigenvalues $\mu_n$ of the operator $\boQ_0$, with $\mu_n \to 1/2$, and a corresponding Hilbert basis $(e_n)_{n \geq 0}$ of $\goH_0(\R)$ such that
\begin{equation}
\label{eq:diago}
\boQ_0(e_n) = \mu_n e_n,
\end{equation}
for any $n \geq 0$.

We next claim that the operator $\boQ_0$ is non-negative so that
$$
\mu_n \geq 0,
$$
for any $n \geq 0$. Consider indeed a function $f \in \goH_0(\R)$, which is of class $\boC^1$ on $\R$. Since $f(0) = 0$, we can continuously extend the function $g := S_0' f/S_0$ to the whole line $\R$ by setting $g(0) = f'(0)$. Since $f$ is in $\goH_0(\R)$, the function $g$ belongs to $L^2(\R)$, and we are allowed to derive from~\eqref{eq:GL} that
$$
\int_\R \Big( f' - \frac{S_0'}{S_0} f \Big)^2 = \int_\R \Big( (f')^2 - 2 \frac{S_0'}{S_0} f f' + \frac{(S_0')^2}{S_0^2} f^2 \Big) = \int_\R \Big( (f')^2 + \frac{S_0''}{S_0} f^2 \Big) = Q_0(f),
$$
by integrating by parts. Hence the quadratic form $Q_0$ is non-negative on the subspace $\goH_0(\R) \cap \boC^1(\R)$. The non-negativity of the operator $\boQ_0$ on $\goH_0(\R)$ then follows from the property that $\goH_0(\R) \cap \boC^1(\R)$ is a dense subspace of $\goH_0(\R)$.

We now claim that the kernel of the operator $\boQ_0$ is spanned by the function $S_0$. Indeed, when a function $f$ belongs to this kernel, it solves the second order differential equation
$$
- f'' - (1 - S_0^2) f = 0,
$$
and it moreover satisfies the initial condition $f(0) = 0$. We observe that $S_0$ is a special solution of this problem. Therefore, it follows from the Cauchy-Lipschitz theorem that the kernel of the operator $\boQ_0$ is spanned by this function.

As a consequence of the two previous claims, we obtain that $\mu_0 = 0$ and $\mu_1 > 0$. Going back to the orthogonal decomposition in~\eqref{eq:diago}, we conclude that
\begin{equation}
\label{eq:coer-Q0}
Q_0(f) \geq \mu_1 \Big\| f - \frac{\langle f, S_0 \rangle_{\goH(\R)}}{\| S_0 \|_{\goH(\R)}^2} S_0 \Big\|_{\goH(\R)}^2,
\end{equation}
for any function $f \in \goH_0(\R)$.

We are now in a position to estimate the various quantities in the decomposition~\eqref{eq:decomp-E}. Since the function $\varepsilon_2$ is even, we first compute
$$
\langle \varepsilon_2, S_0 \rangle_{\goH(\R)} = 2 \int_\R \varepsilon_2 S_0 (1 - S_0^2) = 0.
$$
Hence we deduce from~\eqref{eq:coer-Q0} that
$$
Q_0(\varepsilon_2) \geq \mu_1 \| \varepsilon_2 \|_{\goH(\R)}^2.
$$
In particular, it follows from~\eqref{eq:decomp-E} and the non-negativity of the operator $\boQ_0$ that
\begin{equation}
\label{eq:coer1}
\goE(\psi) - \goE(S_0) \geq \mu_1 \| \varepsilon_2 \|_{\goH(\R)}^2 + \frac{1}{4} \int_\R \eta^2.
\end{equation}

Concerning the real part $\varepsilon_1$, we argue as in the proof of~\cite[Proposition 1]{GravSme1}. We first compute
$$
\frac{1}{4} \int_\R \eta^2 \geq \frac{1}{4} \int_\R (1 - S_0^2) \eta^2 = \int_\R (1 - S_0^2) S_0^2 \varepsilon_1^2 + \int_\R \Big( (1 - S_0^2) S_0 \varepsilon_1 |\varepsilon|^2 + \frac{1}{4} (1 - S_0^2) |\varepsilon|^4 \Big).
$$
Using the identity $S_0' = (1 - S_0^2)/\sqrt{2}$, and invoking the Sobolev embedding theorem for the function $(1 - S_0^2)^{1/2} \varepsilon$, we obtain
$$
\int_\R (1 - S_0^2) S_0 \varepsilon_1 |\varepsilon|^2 = \frac{1}{\sqrt{2}} \int_\R (1 - S_0^2) \Big( \varepsilon_1' \big( 3 \varepsilon_1^2 + \varepsilon_2^2 \big) + 2 \varepsilon_1 \varepsilon_2 \varepsilon_2' \Big) \leq C \big\| \varepsilon \big\|_{\goH(\R)}^3,
$$
for some universal positive number $C$. Hence we have
$$
\frac{1}{4} \int_\R \eta^2 \geq \int_\R (1 - S_0)^2 S_0^2 \varepsilon_1^2 - C \big\| \varepsilon \big\|_{\goH(\R)}^3.
$$
We next deduce from the Cauchy-Schwarz inequality that
$$
\langle \varepsilon_1, S_0 \rangle_{\goH(\R)}^2 \leq 8 \sqrt{2} \int_\R (1 - S_0^2) S_0^2 \varepsilon_1^2.
$$
Since $\| S_0 \|_{\goH(\R)}^2 = 4 \sqrt{2}/3$, we obtain
$$
\frac{1}{4} \int_\R \eta^2 \geq \frac{\langle \varepsilon_1, S_0 \rangle_{\goH(\R)}^2}{6 \| S_0 \|_{\goH(\R)}^2} - C \big\| \varepsilon \big\|_{\goH(\R)}^3.
$$
Combining the non-negativity of the operator $\boQ_0$ with~\eqref{eq:coer-Q0}, we conclude from~\eqref{eq:decomp-E} that
$$
\goE(\psi) - \goE(S_0) \geq \mu_1 \Big\| \varepsilon_1 - \frac{\langle \varepsilon_1, S_0 \rangle_{\goH(\R)}}{\| S_0 \|_{\goH(\R)}^2} S_0 \Big\|_{\goH(\R)}^2 + \frac{\langle \varepsilon_1, S_0 \rangle_{\goH(\R)}^2}{6 \big\| S_0 \big\|_{\goH(\R)}^2} - C \big\| \varepsilon \big\|_{\goH(\R)}^3,
$$
so that
$$
\goE(\psi) - \goE(S_0) \geq \mu \big\| \varepsilon_1 \big\|_{\goH(\R)}^2 - C \big\| \varepsilon \big\|_{\goH(\R)}^3,
$$
for $\mu = \min \{ \mu_1, 1/6 \}$. Going back to~\eqref{eq:coer1}, this gives
$$
\goE(\psi) - \goE(S_0) \geq \frac{\mu}{2} \big\| \varepsilon_1 \big\|_{\goH(\R)}^2 + \frac{\mu_1}{2} \| \varepsilon_2 \|_{\goH(\R)}^2 + \frac{1}{8} \int_\R \eta^2 - \frac{C}{2} \big\| \varepsilon \big\|_{\goH(\R)}^3,
$$
and the coercivity estimate in~\eqref{eq:coer-S0} follows for $\Lambda_0 = \min \{ \mu/2, \mu_1/2, 1/8, 2/C \}$. This completes the proof of Lemma~\ref{lem:coer-S0}. \qed

%%%%%%%%%%%%%%%%%%%%%%%%%%%%%
%%%%%%%%%%%%%%%%%%%%%%%%%%%%%
\subsection{Proof of Lemma~\ref{lem:sol-psi0}}
\label{sub:constr-psi-0}
%%%%%%%%%%%%%%%%%%%%%%%%%%%%%
%%%%%%%%%%%%%%%%%%%%%%%%%%%%%

Consider two positive numbers $\alpha$ and $\beta$ to be fixed later, and assume that the function $f_0 \in \boL^1(\R, \C) \cap \boL^2(\R, \C)$ satisfies the condition in~\eqref{eq:cond-f0}. Recall that the functional $I_{f_0}$ is well-defined on the open subset $\boU(\alpha)$, so that the minimization problem is also well-defined. From~\eqref{eq:cond-f0} and the fact that $\|S_0\|_{L^\infty(\R)} =1$ we deduce the following upper-bound
\begin{equation}
\label{eq:up-boI}
\boI_{f_0}(\alpha) \leq I_{f_0}(S_0) \leq \goE(S_0) + \big\| f_0 \big\|_{L^1(\R)} \leq \goE(S_0) + \beta.
\end{equation}
When $\psi$ is in $\boU(\alpha)$, we derive from~\eqref{eq:estim_f_0} that
\begin{align*}
\bigg| \int_\R \langle f_0, \psi \rangle_\C \bigg| & \leq \big\| f_0 \big\|_{L^1(\R)} + \big\| f_0 \big\|_{L^2(\R)} \Big( \big\| 1- S_0^2 \big\|_{L^2(\R)} + \big\| 1 - |\psi|^2 - (1- S_0^2) \big\|_{L^2(\R)} \Big) \\
& \leq \beta \Big( 1 + \big\| 1 - |\psi|^2 - (1- S_0^2) \big\|_{L^2(\R)} \Big) ,
\end{align*}
so that, by the definition of the distance $d$ in~\eqref{def:d0}, the coercivity estimate in~\eqref{eq:coer-S0}, and the upper bound in~\eqref{eq:up-boI},
\begin{equation}
\label{eq:cuvette}
\begin{split}
I_{f_0}(\psi) & \geq \goE(S_0) + \Lambda_0 \, d(\psi, S_0)^2 - \frac{1}{\Lambda_0} \, d(\psi, S_0)^3 - \beta \Big( 1 + d(\psi, S_0) \Big) \\
& \geq I_{f_0}(S_0) + d(\psi, S_0)^2 \Big( \Lambda_0 - \frac{d(\psi, S_0)}{\Lambda_0} \Big) - \beta \Big( 2 + d(\psi, S_0) \Big).
\end{split}
\end{equation}
Let $\alpha_0 :=\Lambda_0^2/2$. Then
$$
\Lambda_0 - \frac{\alpha_0}{\Lambda_0} \geq \frac{\Lambda_0}{2},
$$
and, therefore, choosing
$$
\beta_0 = {\alpha_0}^2\frac{\Lambda_0}{3(2+\alpha_0)},
$$
we deduce that for any $\beta < \beta_0$, if~\eqref{eq:cond-f0} is satisfied then
\begin{equation}
\label{eq:minint}
d(\psi,S_0) = \alpha_0\quad\Longrightarrow\quad I_{f_0}(\psi)>\boI_{f_0}(\alpha).
\end{equation}

We are now in position to solve the minimization problem $\boI_{f_0}(\alpha_0)$. We consider a minimizing sequence $(\xi_n)_{n \geq 0}$. Since $\xi_n\in \boU(\alpha)$ we have $d \big( \xi_n, S_0 \big) \leq \alpha_0$ for any $n \geq 0$. As a consequence, we can find three functions $(\psi_1, \psi_2, \psi_3) \in L^2(\R, \C)^3$ such that, up to a subsequence,
$$
\xi_n' \rightharpoonup \psi_1, \quad (1 - S_0^2)^\frac{1}{2} \, \xi_n \rightharpoonup \psi_2 \quad \text{ and } \quad 1 - |\xi_n|^2 \rightharpoonup \psi_3 \quad \text{ in } L^2(\R, \C),
$$
as $n \to \infty$. Invoking the Rellich-Kondrachov theorem, we can also exhibit a function $\psi_0 \in \boC^0(\R, \C)$ such that
\begin{equation}
\label{eq:loc-unif-conv}
\xi_n \to \psi_0 \quad \text{ in } \boC_\text{loc}^0(\R, \C).
\end{equation}
In view of the previous convergences, the function $\psi_0$ is actually in $X(\R)$, with $\psi_1 = \psi_0'$, $\psi_2 = (1 - S_0^2)^{1/2} \, \psi_0$, and $\psi_3 = 1 - |\psi_0|^2$. Moreover, we also have
$$
d \big( \psi_0, S_0 \big) \leq \liminf_{n \to \infty} d \big( \xi_n, S_0 \big) \leq \alpha_0.
$$

Similarly, we know that
\begin{equation}
\label{eq:conv-weak-E}
\goE \big( \psi_0 \big) \leq \liminf_{n \to \infty} \goE \big( \xi_n \big),
\end{equation}
and we additionally claim that
\begin{equation}
\label{eq:conv-f-0}
\int_\R \langle f_0, \xi_n \rangle_\C \to \int_\R \langle f_0, \psi_0 \rangle_\C.
\end{equation}
For a fixed positive number $R$, we indeed check as for~\eqref{eq:estim_f_0} that
$$
\bigg| \int_\R \langle f_0, \xi_n - \psi_0 \rangle_\C \bigg| \leq \int_{|x| \leq R} \big| f_0 \big| \, \big| \xi_n - \psi_0 \big| + \int_{|x| \geq R} \big| f_0 \big| \Big( 2 + \big| 1 - |\xi_n|^2 \big| + \big| 1 - |\psi_0|^2 \big| \Big),
$$
so that
$$
\bigg| \int_\R \langle f_0, \xi_n - \psi_0 \rangle_\C \bigg| \leq \big\| f_0 \big\|_{L^1(\R)} \, \big\| \xi_n - \psi_0 \big\|_{L^\infty([- R, R])} + 2 \big\| f_0 \big\|_{L^1([- R, R]^c)} + 2 \alpha_0 \big\| f_0 \big\|_{L^2([- R, R]^c)}.
$$
Due to the local uniform convergence in~\eqref{eq:loc-unif-conv} and to the fact that $f_0$ is in $\boL^1(\R, \C) \cap \boL^2(\R, \C)$, this is sufficient to conclude that~\eqref{eq:conv-f-0} does hold, and to derive from~\eqref{eq:conv-weak-E} that
$$
I_{f_0}(\psi_0) \leq \liminf_{n \to \infty} I_{f_0}(\xi_n) = \boI_{f_0}(\alpha_0).
$$
Hence the function $\psi_0$ is a minimizer of the problem $\boI_{f_0}(\alpha_0)$. From~\eqref{eq:minint}, we then deduce that in fact $d(\psi_0, S_0) < \alpha_0$, so that $\psi_0 \in \boU(\alpha_0)$.

As a consequence of the fact that $\boU(\alpha_0)$ is open, this minimizer is a critical point of the functional $I_{f_0}$, so that it is a weak solution to~\eqref{eq:psi-0}. Since the function $f_0$ is assumed to be continuous, and since $\psi_0$ is also continuous, we conclude from a standard bootstrap argument that the function $\psi_0$ is actually of class $\boC^2$ on $\R$. In particular, it is a classical solution to~\eqref{eq:psi-0}.

To establish~\eqref{eq:dist-S0}, we note that if $d(\psi_0, S_0)^2 > C_0 \beta$, then from~\eqref{eq:cuvette} and our choice of $\alpha_0$ we have
$$
I_{f_0}(\psi_0) - I_{f_0}(S_0) \geq C_0 \beta \, \frac{\Lambda_0}2 - \beta(2+\alpha_0).
$$
Therefore, if $C_0$ is large enough depending on $\Lambda_0$, we obtain a contradiction to the minimality of $I_{f_0}(\psi_0)$. Thus~\eqref{eq:dist-S0} is satisfied, which completes the proof of Lemma~\ref{lem:sol-psi0}. \qed

%%%%%%%%%%%%%%%%%%%%%%%%%%%%%%%%%
%%%%%%%%%%%%%%%%%%%%%%%%%%%%%%%%%
\subsection{Proof of Lemma~\ref{lem:exp-dec-eta-0}}
\label{sub:exp-decay}
%%%%%%%%%%%%%%%%%%%%%%%%%%%%%%%%%
%%%%%%%%%%%%%%%%%%%%%%%%%%%%%%%%%

The proof is based on the differential equation for the function $\eta_0$ in~\eqref{eq:N-0}. When the function $f_0$ is in $\boL_\sigma^\infty(\R)$, the functions $\langle f_0, \psi_0 \rangle_\C$ and $g_0$ in this formula decay exponentially. The functions $\psi_0'$ and $N_0$ indeed belong to $H^1(\R)$, so that both the functions $\psi_0$ and $\psi'_0$ are bounded on $\R$, and we can check that
\begin{equation}
\label{eq:exp-estim-h0}
\big\| \langle f_0, \psi_0 \rangle_\C \big\|_{L_\sigma^\infty(\R)} \leq \big\| \psi_0 \big\|_{L^\infty(\R)} \, \big\| f_0 \big\|_{L_\sigma^\infty(\R)},
\end{equation}
as well as
\begin{equation}
\label{eq:exp-estim-g0}
\big\| g_0 \big\|_{L_\sigma^\infty(\R)} \leq \frac{2}{\sigma} \big\| \psi_0' \big\|_{L^\infty(\R)} \, \big\| f_0 \big\|_{L_\sigma^\infty(\R)}.
\end{equation}
The two previous bounds depend on the function $\psi_0$, which is not uniformly controlled at this stage. In order to derive~\eqref{eq:exp-dec-0}, we need to drop this possible dependence. In this direction, we first establish

\begin{step}
\label{S1:exp-decay}
There exists a positive number $M_0$, depending only on $\Lambda_0$, such that
\begin{equation}
\label{eq:unif-psi-0}
\big\| \psi_0 \big\|_{L^\infty(\R)} + \big\| \psi_0' \big\|_{L^\infty(\R)} \leq M_0.
\end{equation}
Moreover, we also have
\begin{equation}
\label{eq:pertub-0-petit}
\big\| \eta_0 \big\|_{L^\infty(\R)} + \big\| \varepsilon_0' \big\|_{L^\infty(\R)} \leq M_0 \beta^\frac{1}{2}.
\end{equation}
\end{step}

The proof is based on the bound for the function $\psi_0$ in~\eqref{eq:dist-S0}, which means that this function is a perturbation of the black soliton $S_0$ of order $\beta^{1/2}$. We first use this property in order to control uniformly the function $\psi_0$. Since $N_0$ is in $H^1(\R)$, we are allowed to compute
\begin{equation}
\label{eq:unif-bound-eta-0}
N_0(x)^2 = 4 \int_x^{+ \infty} N_0(t) \, \langle \psi_0(t), \psi_0'(t) \rangle_\C \, dt \leq 4 \big\| N_0 \big\|_{L^2(\R)} \big\| \psi_0 \big\|_{L^\infty(\R)} \big\| \psi_0' \big\|_{L^2(\R)},
\end{equation}
for any $x \in \R$. Since $N_0 = 1 - |\psi_0|^2$, we derive from~\eqref{eq:dist-S0} that
\begin{align*}
\big\| \psi_0 \big\|_{L^\infty(\R)}^4 & \leq 2 + 2 \big\| N_0 \big\|_{L^\infty(\R)}^2 \\
& \leq 2 + 8 \Big( \big\| 1 - S_0^2 \big\|_{L^2(\R)} + \sqrt{C_0 \beta_0} \Big) \, \Big( \big\| S_0' \big\|_{L^2(\R)} + \sqrt{C_0 \beta_0} \Big) \, \big\| \psi_0 \big\|_{L^\infty(\R)},
\end{align*}
so that
$$
\big\| \psi_0 \big\|_{L^\infty(\R)} \leq M_0,
$$
for a positive number $M_0$, depending only $\Lambda_0$. In view of~\eqref{eq:deriv-eta}, we also have
$$
\big\| \psi_0' \big\|_{L^\infty(\R)} \leq \frac{1}{\sqrt{2}} \big\| N_0 \|_{L^\infty(\R)} + \sqrt{2} \big\| g_0 \|_{L^\infty(\R)}^\frac{1}{2} \leq \frac{1}{\sqrt{2}} \big\| N_0 \|_{L^\infty(\R)} + \sqrt{2} \big\| f_0 \|_{L^1(\R)}^\frac{1}{2} \big\| \psi_0' \big\|_{L^\infty(\R)}^\frac{1}{2},
$$
so that by~\eqref{eq:cond-f0},
$$
\big\| \psi_0' \big\|_{L^\infty(\R)} \leq M_0,
$$
for a further positive number $M_0$, again also depending on $\Lambda_0$.

In order to establish~\eqref{eq:pertub-0-petit}, we argue as in~\eqref{eq:unif-bound-eta-0}. Since $\eta_0' = - 2 \langle \psi_0, \varepsilon_0' \rangle_\C - 2 \langle \varepsilon_0, S_0' \rangle_\C$, with $S_0' = (1 - S_0^2)/\sqrt{2}$, we can compute
$$
\eta_0(x)^2 \leq 2 \big\| \eta_0 \big\|_{L^2(\R)} \Big( 2 \big\| \psi_0 \big\|_{L^\infty(\R)}\, \big\| \varepsilon_0' \big\|_{L^2(\R)} + \sqrt{2} \big\| 1 - S_0^2 \big\|_{L^\infty(\R)}^\frac{1}{2} \, \big\| (1 - S_0^2)^\frac{1}{2} \varepsilon_0 \big\|_{L^2(\R)} \Big),
$$
so that, by~\eqref{eq:dist-S0},
\begin{equation}
\label{eq:unif-N-0}
\big\| \eta_0 \big\|_{L^\infty}^2 \leq 2 \big( 2 M_0 + \sqrt{2} \big) d(\psi_0, S_0)^2 \leq 2 C_0 \big( 2 M_0 + \sqrt{2} \big) \beta.
\end{equation}
Concerning the derivative $\varepsilon_0'$, we recall from~\eqref{eq:dist-S0} that
\begin{equation}
\label{eq:unif-deriv-eps-0}
\big\| \varepsilon_0' \big\|_{L^2}^2 \leq C_0 \beta.
\end{equation}
In view of~\eqref{eq:eps-0}, we also have
$$
\big\| \varepsilon_0'' \big\|_{L^2(\R)} \leq \big\| (1 - S_0^2)^\frac{1}{2} \big\|_{L^\infty(\R)} \, \big\| (1 - S_0^2)^\frac{1}{2} \varepsilon_0 \big\|_{L^2(\R)} + \big\| \psi_0 \big\|_{L^\infty(\R)} \, \big\| \eta_0 \big\|_{L^2(\R)} + \big\| f_0 \big\|_{L^2(\R)},
$$
so that, by~\eqref{eq:cond-f0},~\eqref{eq:dist-S0} and~\eqref{eq:unif-psi-0},
$$
\big\| \varepsilon_0'' \big\|_{L^2(\R)} \leq \Big( \big( M_0 + 1 \big) C_0^\frac{1}{2} + 1 \Big) \beta^\frac{1}{2}.
$$
Combining this inequality with~\eqref{eq:unif-deriv-eps-0}, applying the Sobolev embedding theorem, and then adding~\eqref{eq:unif-N-0} provide~\eqref{eq:pertub-0-petit} for a possibly larger number $M_0$, depending only on $\Lambda_0$.

With the estimates in Step~\ref{S1:exp-decay} at hand, we can invoke~\eqref{eq:N-0} to show that the functions $N_0$ and $\eta_0$ decay exponentially at infinity. We first address the case where the decay rate $\sigma$ is small enough.

\begin{step}
\label{S2:exp-decay}
Assume that $\sigma < \sqrt{2}/2$. There exists a positive number $\tilde{\beta}_0$, such that if $0 < \beta \leq \tilde{\beta}_0$, then the functions $N_0$ and $\eta_0$ are in $L_\sigma^\infty(\R)$, and there exists a positive number $C_\sigma > 0$, depending only on $\sigma$ and $\Lambda_0$, such that
\begin{equation}
\label{eq:exp-eta-0-sigma}
\big\| \eta_0 \big\|_{L_\sigma^\infty(\R)} \leq C_\sigma \Big( \beta^\frac{1}{2} + \big\| f_0 \big\|_{L_\sigma^\infty(\R)} \Big).
\end{equation}
\end{step}

In order to prove Step~\ref{S2:exp-decay}, we first derive from~\eqref{eq:N-0} that the function $\goN_0 := \eta_0^2$ satisfies
$$
- \goN_0'' + \big( 4 - 12 (1- S_0^2) \big) \goN_0 = 2 \eta_0 h_0 + 6 \eta_0 \goN_0 - 2 \big( \eta_0' \big)^2,
$$
where we have set $h_0 := 2 (\langle f_0, \psi_0 \rangle_\C + g_0)$. Going back to~\eqref{eq:pertub-0-petit}, we can bound the first two terms in the right-hand side of this identity by
$$
2 \eta_0 h_0 + 6 \eta_0 \goN_0 \leq \Big( \frac{1}{2} + 6 M_0 \beta^\frac{1}{2} \Big) \goN_0 + 2 h_0^2 \leq \goN_0 + 2 h_0^2,
$$
when $\beta \leq \tilde{\beta}_0 : = 1/(144 \, M_0^2)$. Since $1 - S_0(x)^2 \to 0$ as $x \to \pm \infty$, we can also find a universal positive number $R_0$ such that
$$
12 \, (1- S_0(x)^2) \leq 1,
$$
when $|x| \geq R_0$. Hence we are led to
$$
- \goN_0''(x) + 2 \goN_0(x) \leq 2 h_0(x)^2.
$$
Arguing as for the variation of parameters, we obtain
$$
- \Big( \big( \goN_0(x) e^{\sqrt{2} x} \big)' e^{- 2 \sqrt{2} x} \Big)' \leq 2 h_0(x)^2 e^{- \sqrt{2} x},
$$
for $x \geq R_0$.

We next invoke~\eqref{eq:exp-estim-h0} and~\eqref{eq:exp-estim-g0} in order to check that
$$
2 h_0(x)^2 \leq A_\sigma \, \| f_0 \|_{L_\sigma^\infty(\R)}^2 \, e^{- 2 \sigma |x|},
$$
where $A_\sigma := 8 ( \| \psi_0 \|_{L^\infty(\R)} + \| \psi_0' \|_{L^\infty(\R)}/\sigma )^2$ only depends on $\sigma$ and $\Lambda_0$ by Step~\ref{S1:exp-decay}. This gives
$$
- \Big( \big( \goN_0(x) e^{\sqrt{2} x} \big)' e^{- 2 \sqrt{2} x} \Big)' \leq A_\sigma \, \| f_0 \|_{L_\sigma^\infty(\R)}^2 \, e^{- (\sqrt{2} + 2 \sigma) x}.
$$
Observing that
$$
\big( \goN_0(x) e^{\sqrt{2} x} \big)' e^{- 2 \sqrt{2} x} = \big( \goN_0'(x) + \sqrt{2} \goN_0(x) \big) e^{- \sqrt{2} x} \to 0,
$$
as $x \to + \infty$, we can integrate the previous inequality in order to obtain
$$
\big( \goN_0(x) e^{\sqrt{2} x} \big)' \leq \frac{A_\sigma}{\sqrt{2} + 2 \sigma} \, \| f_0 \|_{L_\sigma^\infty(\R)}^2 \, e^{(\sqrt{2} - 2 \sigma) x}.
$$
Since $\sigma < 1/\sqrt{2}$, and $\goN_0(R_0) \leq M_0^2 \beta$ by~\eqref{eq:pertub-0-petit}, we conclude that
\begin{equation}
\label{eq:bound-goN-0}
\begin{split}
\goN_0(x) & \leq \Big( \goN_0(R_0) e^{\sqrt{2} R_0} + \frac{A_\sigma}{2 - 4 \sigma^2} \, \big( e^{(\sqrt{2} - 2 \sigma) x} - e^{(\sqrt{2} - 2 \sigma) R_0} \big) \, \| f_0 \|_{L_\sigma^\infty(\R)}^2 \Big) e^{- \sqrt{2} x} \\
& \leq \Big( M_0^2 \, \beta \, e^{\sqrt{2} R_0} + \frac{A_\sigma}{2 - 4 \sigma^2} \, \| f_0 \|_{L_\sigma^\infty(\R)}^2 \Big) e^{- 2 \sigma x}.
\end{split}
\end{equation}
A similar estimate holds for $x \leq - R_0$, so that the function $\goN_0$ belongs to $L_{2 \sigma}^\infty([- R_0, R_0]^c)$. Since $\goN_0 = \eta_0^2$, and $\eta_0$ is already known to be continuous on $\R$, the function $\eta_0$, and then $N_0$, are in $L_\sigma^\infty(\R)$. Concerning the bound in~\eqref{eq:exp-eta-0-sigma}, we already derive from~\eqref{eq:bound-goN-0} that
$$
\big\| \eta_0 \big\|_{L_\sigma^\infty([- R_0, R_0]^c)} \leq C_\sigma \Big( \beta^\frac{1}{2} + \| f_0 \|_{L_\sigma^\infty(\R)} \Big),
$$
for some positive number $C_\sigma$ depending only on $\sigma$ and $\Lambda_0$. On the other hand, it follows from~\eqref{eq:pertub-0-petit} that
$$
\big\| \eta_0 \big\|_{L_\sigma^\infty([- R_0, R_0])} \leq \big\| \eta_0 \big\|_{L^\infty(\R)} e^{\sigma R_0} \leq M_0 \, \beta^\frac{1}{2} \, e^{\sigma R_0},
$$
which is sufficient to obtain~\eqref{eq:exp-eta-0-sigma} for a larger positive number $C_\sigma$.

We now deal with the case where the decay rate $\sigma$ is more than $\sqrt{2}/2$.

\begin{step}
\label{S3:exp-decay}
Assume that $\sqrt{2}/2 \leq \sigma < \sqrt{2}$. The functions $N_0$ and $\eta_0$ are in $L_\sigma^\infty(\R)$, and there exists a positive number $C_\sigma > 0$, depending only on $\sigma$ and $\Lambda_0$, such that
$$
\big\| \eta_0 \big\|_{L_\sigma^\infty(\R)} \leq C_\sigma \Big( \beta^\frac{1}{2} + \big\| f_0 \big\|_{L_\sigma^\infty(\R)} \Big)\Big( 1 + \beta^\frac{1}{2} + \big\| f_0 \big\|_{L_\sigma^\infty(\R)} \Big).
$$
\end{step}

The proof is also based on~\eqref{eq:N-0}. For $\sigma < \sqrt{2}$, the function $f_0$ is in $L_{\sigma/2}^\infty(\R)$, with $\sigma/2 < \sqrt{2}/2$. By Step~\ref{S2:exp-decay}, the functions $\eta_0$ and $N_0$ also belong to $L_{\sigma/2}^\infty(\R)$, and $\eta_0$ satisfies~\eqref{eq:exp-eta-0-sigma} with $\sigma$ replaced by $\sigma/2$. In particular, we are allowed to deduce from~\eqref{eq:exp-estim-h0},~\eqref{eq:exp-estim-g0}, and Step~\ref{S1:exp-decay} that
\begin{equation}
\label{eq:exp-nonlin1}
\big\| 3 \eta_0^2 + h_0 \big\|_{L_\sigma^\infty(\R)} \leq 3 \big\| \eta_0 \big\|_{L_\frac{\sigma}{2}^\infty(\R)}^2 + 2 M_0 \Big( 1 + \frac{2}{\sigma} \Big) \big\| f_0 \big\|_{L_\sigma^\infty(\R)}.
\end{equation}
On the other hand, we derive from~\eqref{eq:pertub-0-petit} that
\begin{equation}
\label{eq:exp-nonlin2}
\big\| 6 (1 - S_0^2) \eta_0 \big\|_{L_\sigma^\infty(\R))} \leq 6 \big\| \eta_0 \big\|_{L^\infty(\R)} \big\| 1 - S_0^2 \big\|_{L_{\sqrt{2}}^\infty(\R)} \leq 24 M_0 \beta^\frac{1}{2}.
\end{equation}
Going back to~\eqref{eq:N-0}, we can infer from the variation of parameters that
$$
\eta_0(x) = \eta_0(0) e^{- \sqrt{2} x} + \int_0^x \int_z^{+ \infty} \big( 6 (1 - S_0(t)^2) \eta_0(t) + 3 \eta_0(t)^2 + h_0(t) \big) \, e^{\sqrt{2} (2 z - t - x)} \, dt \,dz,
$$
for any $x \geq 0$. Since $\sigma < \sqrt{2}$, we deduce from~\eqref{eq:pertub-0-petit} that
$$
\big| \eta_0(x) \big| \leq M_0 \, \beta^\frac{1}{2} \, e^{- \sqrt{2} x} + \frac{1}{2 - \sigma^2} \, \big\| 6 (1 - S_0^2) \eta_0 + 3 \eta_0^2 + h_0 \big\|_{L_\sigma^\infty(\R)} \, e^{- \sigma x}.
$$
We can argue similarly for $x \leq 0$ in order to obtain that the function $\eta_0$, and then $N_0$, are in $L_\sigma^\infty(\R)$, with
$$
\big\| \eta_0 \big\|_{L_\sigma^\infty(\R)} \leq 25 M_0 \, \beta^\frac{1}{2} + \frac{3 C_\frac{\sigma}{2}}{2 - \sigma^2} \Big( \beta^\frac{1}{2} + \big\| f_0 \big\|_{L_\sigma^\infty(\R)} \Big)^2 + \frac{2 M_0 (2 + \sigma)}{\sigma (2 - \sigma^2)}\, \big\| f_0 \big\|_{L_\sigma^\infty(\R)},
$$
by~\eqref{eq:exp-nonlin1},~\eqref{eq:exp-nonlin2} and Step~\ref{S2:exp-decay}. This completes the proof of Step~\ref{S3:exp-decay}.

We finally conclude the proof of Lemma~\ref{lem:exp-dec-eta-0} by deriving the exponential decay of the functions $\psi_0'$ and $\varepsilon_0'$.

\begin{step}
\label{S4:exp-decay}
Conclusion.
\end{step}

Our control on the function $\varepsilon_0'$ is based on~\eqref{eq:eps-0}. This formula indeed provides
$$
\big\| \varepsilon_0'' \big\|_{L_\sigma^\infty(\R)} \leq \big\| (1 - S_0^2) \varepsilon_0 \big\|_{L_\sigma^\infty(\R)} + \big\| \psi_0 \big\|_{L^\infty(\R)} \big\| \eta_0 \big\|_{L_\sigma^\infty(\R)} + \big\| f_0 \big\|_{L_\sigma^\infty(\R)}.
$$
Since $\varepsilon(0) = 0$, we can write
$$
\big( 1 - S_0(x)^2 \big) \, \varepsilon_0(x) \leq 4 |x| e^{- \sqrt{2} |x|} \big\| \varepsilon_0' \big\|_{L^\infty(\R)}
$$
for any $x \in \R$, so that by~\eqref{eq:pertub-0-petit},
$$
\big\| (1 - S_0^2) \varepsilon_0 \big\|_{L_\sigma^\infty(\R)} \leq \frac{4 M_0}{e (\sqrt{2} - \sigma)} \beta^\frac{1}{2}.
$$
As a consequence, we deduce again from~\eqref{eq:pertub-0-petit} that
\begin{equation}
\label{eq:unif-second-eps-0}
\big\| \varepsilon_0'' \big\|_{L_\sigma^\infty(\R)} \leq \frac{4 M_0}{e (\sqrt{2} - \sigma)} \beta^\frac{1}{2} + M_0 \big\| \eta_0 \big\|_{L_\sigma^\infty(\R)} + \big\| f_0 \big\|_{L_\sigma^\infty(\R)}.
\end{equation}
By Steps~\ref{S2:exp-decay} and~\ref{S3:exp-decay}, the right-hand side of this inequality is finite, so that the function $\varepsilon_0''$ is in $L_\sigma^\infty(\R)$. In this case, it is integrable on $\R$. Since $\varepsilon_0'(x) \to 0$ as $x \to \pm \infty$, we can write the derivative $\varepsilon_0'$ as
$$
\varepsilon_0'(x) = - \int_x^{+ \infty} \varepsilon_0''(t) \, dt = \int_{- \infty}^x \varepsilon_0''(t) \, dt,
$$
so that $\varepsilon_0'$ is also in $L_\sigma^\infty(\R)$, with
$$
\big\| \varepsilon_0' \big\|_{L_\sigma^\infty(\R)} \leq \frac{1}{\sigma} \, \big\| \varepsilon_0'' \big\|_{L_\sigma^\infty(\R)}.
$$
The estimate for $\varepsilon_0'$ in~\eqref{eq:exp-dec-0} follows (for a further positive number $C_\sigma$) by combining with~\eqref{eq:unif-second-eps-0}, Step~\ref{S2:exp-decay} and Step~\ref{S3:exp-decay}. Finally we also have
$$
\big\| \varepsilon_0 \big\|_{L^\infty(\R)} \leq \big\| \varepsilon_0' \big\|_{L^1(\R)} \leq \frac{2}{\sigma} \, \big\| \varepsilon_0' \big\|_{L_\sigma^\infty(\R)},
$$
due to the property that $\varepsilon(0) = 0$. This provides the estimate for $\varepsilon_0$ in~\eqref{eq:exp-dec-0} (for another positive number $C_\sigma$), and concludes the proof of Lemma~\ref{lem:exp-dec-eta-0}. \qed

%%%%%%%%%%%%%%%%%%%%%%%%%%%%%%%
%%%%%%%%%%%%%%%%%%%%%%%%%%%%%%%
\subsection{Proof of Lemma~\ref{lem:control-V-N}}
\label{sub:control-V-N}
%%%%%%%%%%%%%%%%%%%%%%%%%%%%%%%
%%%%%%%%%%%%%%%%%%%%%%%%%%%%%%%

The proof is based on equation~\eqref{eq:eps-0} for $\varepsilon_0 = \psi_0 - S_0$. Going back to Lemma~\ref{lem:exp-dec-eta-0}, we recall that the derivative $\varepsilon_0'$ and the function $\eta_0$ are in $L_\sigma^\infty(\R)$. As a consequence, the function $\varepsilon_0$ is bounded on $\R$. Since the functions $1 - S_0^2$ and $f_0$ are also in $L_\sigma^\infty(\R)$, we deduce from~\eqref{eq:eps-0} that the second order derivative $\varepsilon_0''$ belongs to $L_\sigma^\infty(\R)$. In conclusion, the functions $V := \varepsilon_1 - \varepsilon_0$ and $U := (\varepsilon_1 + \varepsilon_0)/2$ are bounded on $\R$, and their first and second order derivatives are in $L_\sigma^\infty(\R)$. Note that the functions $N := \eta_1 - \eta_0$ and $M := (\eta_1 + \eta_0)/2$ also belong to this space.

Invoking again~\eqref{eq:eps-0}, we check that the difference $V$ satisfies the equation
\begin{equation}
\label{eq:V}
- V'' - (1 - S_0^2) V = - F + (S_0 + U) N + V M,
\end{equation}
where we have set $F = f_1 - f_0$. Multiplying this equation by $V$, we are allowed to integrate by parts in order to obtain
$$
\int_\R \Big( |V'|^2 - (1 - S_0^2) |V|^2 \Big) = \int_\R \Big( - \langle F, V \rangle_\C + N \langle S_0 + U, V \rangle_\C + M |V|^2 \Big).
$$
Since $N = \eta_1 - \eta_0 = - 2 \langle S_0 + U, V \rangle_\C$, we can rewrite this identity as
\begin{equation}
\label{eq:int-V}
\int_\R \Big( |V'|^2 - (1 - S_0^2) |V|^2 + \frac{N^2}{2} \Big) = \int_\R \Big( - \langle F, V \rangle_\C + M |V|^2 \Big),
\end{equation}
and we can bound the right-hand side as follows
\begin{equation}
\label{eq:rhs-V}
\int_\R \Big( - \langle F, V \rangle_\C + M |V|^2 \Big) \leq \| F \|_{L_\tau^2(\R)} \| V \|_{L_{- \tau}^2(\R)} + \| M \|_{L_\sigma^\infty(\R)} \| V \|_{L_{- \tau}^2(\R)}^2,
\end{equation}
where $\tau = \sigma/2$.

At this stage, we can bound the norm $\| V \|_{L_{- \tau}^2(\R)}$ by the norm $\| V' \|_{H(\R)}$. This claim follows from the fact that $V(0) = \varepsilon_1(0) - \varepsilon_0(0) = \psi_1(0) - \psi_0(0) = 0$. As a consequence, we can write
$$
V(x) = \int_0^x V'(y) \, dy,
$$
for any $x \in \R$, so that
\begin{equation}
\label{eq:unif-V}
|V(x)|^2 \leq |x| \, \| V' \|_{L^2(\R)}^2.
\end{equation}
This inequality is then enough to establish that
$$
\| V \|_{L_{- \tau}^2(\R)} \leq \frac{1}{\sqrt{2} \tau} \| V' \|_{L^2(\R)} \leq \frac{\sqrt{2}}{\sigma} \| V \|_{H(\R)}.
$$
 Introducing this bound into~\eqref{eq:rhs-V}, and adding the inequality
$$
\| F \|_{L_\tau^2(\R)} \leq \sqrt{\frac{2}{\sigma}} \| F \|_{L_\sigma^\infty(\R)},
$$
we are led to
\begin{equation}
\label{eq:estim-rhs-V}
\int_\R \Big( - \langle F, V \rangle_\C + M |V|^2 \Big) \leq \frac{1}{\kappa \sigma} \| F \|_{L_\sigma^\infty(\R)}^2 + \frac{2}{\sigma^2} \Big( \frac{\kappa}{2} + \| M \|_{L_\sigma^\infty(\R)} \Big) \| V \|_{H(\R)}^2,
\end{equation}
for any positive number $\kappa$.

We now turn to the left-hand side of~\eqref{eq:int-V}. We control it by using the coercivity of the quadratic form $Q_0$ introduced in the proof of Lemma~\ref{lem:coer-S0}. Going back to~\eqref{eq:coer-Q0}, we have
\begin{equation}
\label{eq:coer-V}
\int_\R \Big( |V'|^2 - (1 - S_0^2) |V|^2 \Big) \geq \mu_1 \Big( \Big\| V_1 - \frac{\langle V_1, S_0 \rangle_{\goH(\R)}}{\| S_0 \|_{\goH(\R)}^2} S_0 \Big\|_{\goH(\R)}^2 + \Big\| V_2 - \frac{\langle V_2, S_0 \rangle_{\goH(\R)}}{\| S_0 \|_{\goH(\R)}^2} S_0 \Big\|_{\goH(\R)}^2 \Big),
\end{equation}
where $V_1$ and $V_2$ stand for the real and imaginary parts of $V$, and $\mu_1$ is the first positive eigenvalue of the operator $\boQ_0$. Since the imaginary parts of $\psi_1$ and $\psi_0$ are even, so is the function $V_2$, and we have
\begin{equation}
\label{eq:scal-S0-V2}
\langle V_2, S_0 \rangle_{\goH(\R)} = 0.
\end{equation}
Concerning the real part $V_1$, we argue as in the proof of Lemma~\ref{lem:coer-S0}. We first derive from the formula $N = - 2 S_0 V_1 - 2 \langle U, V \rangle_\C$ and the Cauchy-Schwarz inequality that
$$
\frac{1}{2} \int_{\R^2} N^2 \geq \frac{1}{2} \int_\R (1 - S_0^2) N^2 \geq \int_\R (1 - S_0^2) S_0^2 V_1^2 - 2 \int_\R (1 - S_0^2) \langle U, V \rangle_\C^2.
$$
Using the H\"older inequality, we next write
$$
\int_\R (1 - S_0^2) \langle U, V \rangle_\C^2 \leq \bigg( \int_{\R} (1 - S_0^2) |U|^4 \bigg)^\frac{1}{2} \, \bigg( \int_{\R} (1 - S_0^2) |V|^4 \bigg)^\frac{1}{2}.
$$
Since $U(0) = (\varepsilon_1(0) + \varepsilon_0(0))/2 = (\psi_1(0) + \psi_0(0))/2 - S_0(0) = 0$, we obtain as in the proof of~\eqref{eq:unif-V} that
$$
|U(x)|^4 \leq |x|^2 \, \| U' \|_{L^2(\R)}^4.
$$
Using the inequality $1 - S_0(x)^2 \leq 4 e^{- \sqrt{2} x}$, this gives
$$
\bigg( \int_{\R} (1 - S_0^2) |U|^4 \bigg)^\frac{1}{2} \leq \frac{1}{\sqrt{4 \sqrt{2}}} \, \| U' \|_{L^2(\R)}^2.
$$
Arguing similarly for the function $V$, we obtain
$$
\int_\R (1 - S_0^2) \langle U, V \rangle_\C^2 \leq \frac{1}{4 \sqrt{2}} \, \| U' \|_{L^2(\R)}^2 \, \| V' \|_{L^2(\R)}^2,
$$
which eventually gives the upper bound
$$
\int_\R (1 - S_0^2) \langle U, V \rangle_\C^2 \leq \frac{1}{4 \sqrt{2} \sigma} \, \| U' \|_{L_\sigma^\infty(\R)}^2 \, \| V \|_{H(\R)}^2,
$$
and then the lower bound
\begin{equation}
\label{eq:low-N2}
\frac{1}{2} \int_{\R^2} N^2 \geq \int_\R (1 - S_0^2) S_0^2 V_1^2 - \frac{1}{2 \sqrt{2} \sigma} \, \| U' \|_{L_\sigma^\infty(\R)}^2 \, \| V \|_{H(\R)}^2.
\end{equation}
Recall at this stage that $S_0'' + (1 - S_0^2) = 0$, so that
$$
\langle V_1, S_0 \rangle_{\goH(\R)} = 2 \int_\R V_1 S_0 (1 - S_0^2).
$$
Hence by the Cauchy-Schwarz inequality in $L^2 \big( (1 - S_0(x)^2) \, dx \big)$, we obtain
$$
\langle V_1, S_0 \rangle_{\goH(\R)}^2 \leq 8 \sqrt{2} \int_\R V_1^2 S_0^2 (1 - S_0^2).
$$
Combining with~\eqref{eq:low-N2} finally gives
$$
 \frac{1}{2} \int_\R N^2 \geq \frac{1}{8 \sqrt{2}} \langle V_1, S_0 \rangle_{\goH(\R)}^2 - \frac{1}{2 \sqrt{2} \sigma} \, \| U' \|_{L_\sigma^\infty(\R)}^2 \, \| V \|_{H(\R)}^2.
$$
We next gather this inequality with~\eqref{eq:coer-V} and~\eqref{eq:scal-S0-V2}. Using again the fact that the quadratic form $Q_0$ is non-negative, this provides
\begin{align*}
\int_\R \Big( |V'|^2 - (1 - S_0^2) |V|^2 + \frac{N^2}{2} \Big) \geq & \mu_1 \Big( \Big\| V_1 - \frac{\langle V_1, S_0 \rangle_{\goH(\R)}}{\| S_0 \|_{\goH(\R)}^2} S_0 \Big\|_{\goH(\R)}^2 + \big\| V_2 \big\|_{\goH(\R)}^2 \Big) + \frac{1}{4} \int_\R N^2 \\
& + \frac{\langle V_1, S_0 \rangle_{\goH(\R)}^2}{12 \| S_0 \|_{\goH(\R)}^2} - \frac{1}{4 \sqrt{2} \sigma} \, \| U' \|_{L_\sigma^\infty(\R)}^2 \, \| V \|_{H(\R)}^2.
\end{align*}

In view of~\eqref{eq:int-V} and~\eqref{eq:estim-rhs-V}, we are led to the inequality
$$
\mu \Big( \| V \|_{H(\R)}^2 + \int_\R N^2 \Big) \leq \frac{1}{\kappa \sigma} \| F \|_{L_\sigma^\infty(\R)}^2 + \frac{2}{\sigma^2} \Big( \frac{\kappa}{2} + \| M \|_{L_\sigma^\infty(\R)} + \frac{\sigma}{8 \sqrt{2}} \, \| U' \|_{L_\sigma^\infty(\R)}^2\Big) \| V \|_{H(\R)}^2,
$$
for $\mu = \min \{ \mu_1, 1/12 \}$. It now remains to use Lemma~\ref{lem:exp-dec-eta-0} in order to bound the norms of $M$ and $U'$ in the right-hand side of this estimate. Under the condition in~\eqref{eq:cond-exp-f0}, we first have
$$
\| f_0 \|_{L^1(\R)} + \| f_0 \|_{L^2(\R)} + \| f_1 \|_{L^1(\R)} + \| f_1 \|_{L^2(\R)} \leq \Big( \frac{2}{\sigma} + \frac{1}{\sqrt{\sigma}} \Big) \big( \| f_0 \|_{L_\sigma^\infty(\R)} + \| f_1 \|_{L_\sigma^\infty(\R)} \big) \leq \gamma \Big( \frac{2}{\sigma} + \frac{1}{\sqrt{\sigma}} \Big).
$$
In particular, when $\beta(\gamma) := \gamma (2/\sigma + 1/\sqrt{\sigma}) \leq \beta_\sigma$, we are in position to apply Lemma~\ref{lem:exp-dec-eta-0} in order to obtain
$$
\big\| \eta_0 \big\|_{L_\sigma^\infty(\R)}^2 + \big\| \varepsilon_0' \big\|_{L_\sigma^\infty(\R)}^2 + \big\| \eta_1 \big\|_{L_\sigma^\infty(\R)}^2 + \big\| \varepsilon_1' \big\|_{L_\sigma^\infty(\R)}^2 \leq C_\sigma \Big( \beta(\gamma) + \gamma^2 \Big) \Big( 1 + \beta(\gamma) + \gamma^2 \Big),
$$
where $C_\sigma$ denotes, here as in the sequel, a positive number depending only on $\sigma$. In view of the definition of the functions $M$ and $U$, this gives
$$
\| M \|_{L_\sigma^\infty(\R)} + \| U' \|_{L_\sigma^\infty(\R)} \leq C_\sigma \Big( \beta(\gamma) + \gamma^2 \Big)^\frac{1}{2} \Big( 1 + \beta(\gamma) + \gamma^2 \Big)^\frac{1}{2}.
$$
In particular, we can choose the values of the numbers $\kappa$ and $\gamma_\sigma$ small enough so that, for $\gamma \leq \gamma_\sigma$, we have
$$
\frac{2}{\sigma^2} \Big( \frac{\kappa}{2} + \| M \|_{L_\sigma^\infty(\R)} + \frac{\sigma}{8 \sqrt{2}} \, \| U' \|_{L_\sigma^\infty(\R)}^2\Big) \leq \frac{\mu}{2}.
$$
With this choice, we conclude that
$$
\| V \|_{H(\R)}^2 + \| N \|_{L^2(\R)}^2 \leq \frac{2}{\kappa \mu \sigma} \| F \|_{L_\sigma^\infty(\R)}^2,
$$
which completes the proof of Lemma~\ref{lem:control-V-N} for $A_\sigma = 2/(\kappa \mu \sigma)^{1/2}$. \qed

%%%%%%%%%%%%%%%%%%%%%%%%%%%%%%%%%%
%%%%%%%%%%%%%%%%%%%%%%%%%%%%%%%%%%
\subsection{Proof of Lemma~\ref{lem:control-exp-V-N}}
\label{sub:exp-V-N}
%%%%%%%%%%%%%%%%%%%%%%%%%%%%%%%%%%
%%%%%%%%%%%%%%%%%%%%%%%%%%%%%%%%%%

The proof follows from controlling the differences $V' := \varepsilon_1' - \varepsilon_0'$ and $N := \eta_1 - \eta_0$ in the spaces $L_\sigma^\infty(\R)$. As in the proof of Lemma~\ref{lem:control-V-N}, this control relies on the differential equations which these functions satisfy. Concerning the function $N$, we derive from~\eqref{eq:N-0} the differential equation
\begin{equation}
\label{eq:N}
- N'' + 2 N = 6 (1 - S_0^2) N + 6 M N + 2 \langle F, S_0 + U \rangle_{\C} + \langle H, V \rangle_{\C} + 2 G_0 := \boN,
\end{equation}
with
\begin{align*}
G_0(x) = & \int_{- \infty}^x \big( \langle F(t), S_0'(t) + U'(t) \rangle_\C + \langle H(t), V'(t) \rangle_\C \big) \, dt \\
= & - \int_x^{+ \infty} \big( \langle F(t), S_0'(t) + U'(t) \rangle_\C + \langle H(t), V'(t) \rangle_\C \big) \, dt,
\end{align*}
for any $x \in \R$. Here, we have set, as above, $U := (\varepsilon_1 + \varepsilon_0)/2$, $M := (\eta_1 + \eta_0)/2$ and $F = f_1 - f_0$, as well as $H := (f_1 + f_0)/2$. We first deal with the function $N$, which we need to control in $H^1(\R)$ before bounding it in $L_\sigma^\infty(\R)$.

\setcounter{step}{0}
\begin{step}
\label{S1:N'}
There exists a positive number $B_1$, depending only on $\sigma$, such that
$$
\big\| N' \big\|_{L^2(\R)} \leq B_1 \big\| f_1 - f_0 \big\|_{L_\sigma^\infty(\R)}.
$$
\end{step}

The proof follows the lines of the proof of Lemma~\ref{lem:control-V-N}. Multiplying~\eqref{eq:N} by the function $N$ and integrating by parts first gives
$$
\int_\R \Big( (N')^2 + 2 N^2 \Big) = \int_\R \Big( 6 (1 - S_0^2) N^2 + 6 M N^2 + 2 \langle F, S_0 + U \rangle_{\C} N + \langle H, V \rangle_{\C} N + 2 G_0 N \Big),
$$
so that
\begin{equation}
\label{eq:cont-N'}
\begin{split}
\| N' \|_{L^2(\R)}^2 \leq \| N \|_{L^2(\R)} \Big( & 6 \| N \|_{L^2(\R)} \big( 1 + \| M \|_{L^\infty(\R)} \big) + 2 \| F \|_{L^2(\R)} \big( 1 + \| U \|_{L^\infty(\R)} \big) \\
& + \| \langle H, V \rangle_{\C} \|_{L^2(\R)} + 2 \| G_0 \|_{L^2(\R)} \Big).
\end{split}
\end{equation}
In this inequality, we observe that
$$
\| M \|_{L^\infty(\R)} \leq \frac{1}{2} \Big( \| \eta_1 \|_{L_\sigma^\infty(\R)} + \| \eta_0 \|_{L_\sigma^\infty(\R)} \Big),
$$
while
$$
\| F \|_{L^2(\R)} \leq \frac{1}{\sqrt{\sigma}} \| F \|_{L_\sigma^\infty(\R)}.
$$
Since $U(0) = (\varepsilon_1(0) + \varepsilon_0(0))/2 = 0$, we also have
$$
U(x) = \int_0^x U'(t) \, dt,
$$
so that
\begin{equation}
\label{eq:U-unif}
\| U \|_{L^\infty(\R)} \leq \frac{1}{\sigma} \| U' \|_{L_\sigma^\infty(\R)} \leq \frac{1}{2 \sigma} \Big( \| \varepsilon_1' \|_{L_\sigma^\infty(\R)} + \| \varepsilon_0' \|_{L_\sigma^\infty(\R)} \Big).
\end{equation}
Similarly, we infer from the formula $V(0) = \varepsilon_1(0) - \varepsilon_0(0) = 0$ that
\begin{equation}
\label{eq:V-sqrt}
|V(x)| \leq |x|^\frac{1}{2} \| V' \|_{L^2(\R)},
\end{equation}
so that
$$
\| \langle H, V \rangle_{\C} \|_{L^2(\R)} \leq \frac{1}{\sqrt{2} \sigma} \| H \|_{L_\sigma^\infty(\R)} \, \| V' \|_{L^2(\R)} \leq \frac{1}{2 \sqrt{2} \sigma} \Big( \| f_1 \|_{L_\sigma^\infty(\R)} + \| f_0 \|_{L_\sigma^\infty(\R)} \Big) \| V \|_{H(\R)}.
$$
Concerning the function $G_0$, we check that
\begin{equation}
\label{eq:G0-unif}
\big| G_0(x) \big| \leq \| F \|_{L_\sigma^\infty(\R)} \Big( \frac{2 \sqrt{2} e^{- (\sigma + \sqrt{2}) |x|}}{\sigma + \sqrt{2}} + \| U' \|_{L_\sigma^\infty(\R)} \frac{e^{- 2 \sigma |x|}}{2 \sigma} \Big) + \| H \|_{L_\sigma^\infty(\R)} \, \| V' \|_{L^2(\R)} \frac{e^{- \sigma |x|}}{\sqrt{2 \sigma}},
\end{equation}
so that
\begin{align*}
\big\| G_0 \big\|_{L^2(\R)} \leq \, & \| F \|_{L_\sigma^\infty(\R)} \Big( \frac{2 \sqrt{2}}{(\sigma + \sqrt{2})^\frac{3}{2}} + \frac{1}{2 (2 \sigma)^\frac{3}{2}} \big( \| \varepsilon_1' \|_{L_\sigma^\infty(\R)} + \| \varepsilon_0' \|_{L_\sigma^\infty(\R)} \big) \Big) \\
& + \frac{1}{2 \sqrt{2} \sigma} \Big( \| f_1 \|_{L_\sigma^\infty(\R)} + \| f_0 \|_{L_\sigma^\infty(\R)} \Big) \| V \|_{H(\R)}.
\end{align*}
Gathering all the previous estimates and using~\eqref{eq:exp-dec-0} and~\eqref{eq:cond-exp-f0}, we can bound~\eqref{eq:cont-N'} by
$$
\| N' \|_{L^2(\R)}^2 \leq B \| N \|_{L^2(\R)} \Big( \| N \|_{L^2(\R)} + \| F \|_{L_\sigma^\infty(\R)} + \| V \|_{H(\R)} \Big),
$$
where $B$ denotes, here as in the sequel, a positive number depending only on $\sigma$. Step~\ref{S1:N'} is then a direct consequence of~\eqref{eq:V-N}.

We next control the function $N$ in $L_\sigma^\infty(\R)$.

\begin{step}
\label{S2:exp-N}
Given any number $0 < \tau < \sigma$, there exists a positive number $B_2$, depending only on $\sigma$ and $\tau$, such that
$$
\big\| N \big\|_{L_\tau^\infty(\R)} \leq B_2 \big\| f_1 - f_0 \big\|_{L_\sigma^\infty(\R)}.
$$
\end{step}

Observe first that the right-hand side $\boN$ in~\eqref{eq:N} exponentially decays at infinity due to the exponential decay of the functions $\varepsilon_1'$, $\varepsilon_0'$, $\eta_1$ and $\eta_0$ in Lemma~\ref{lem:exp-dec-eta-0}, the boundedness of the functions $\varepsilon_1$, $\varepsilon_0$, and the exponential decay of the functions $f_1$ and $f_0$. Applying the variation of parameters as in the proof of Lemma~\ref{lem:exp-dec-eta-0}, we are therefore allowed to derive from~\eqref{eq:N} that
$$
N(x) = \frac{1}{2 \sqrt{2}} \bigg( \int_x^{+ \infty} e^{\sqrt{2}(x - t)} \boN(t) \, dt + \int_{- \infty}^x e^{\sqrt{2}(t - x)} \boN(t) \, dt \bigg),
$$
for any $x \in \R$. In case the function $\boN$ is in $L_\tau^\infty(\R)$, we infer from this formula that
\begin{equation}
\label{eq:N-elliptic}
\| N \|_{L_\tau^\infty(\R)} \leq \frac{1}{2 - \tau^2} \| \boN \|_{L_\tau^\infty(\R)}.
\end{equation}
Hence proving Step~\ref{S2:exp-N} reduces to establish that the function $\boN$ can be controlled by the function $F$ in the space $L_\tau^\infty(\R)$.

In order to do so, we estimate each term in the definition of the function $\boN$. Concerning the two first terms, we observe that
\begin{align*}
\big\| 6 ( 1 - S_0^2 + M) N \big\|_{L_\sigma^\infty(\R)} & \leq 6 \Big( \| 1 - S_0^2 \|_{L_\sigma^\infty(\R)} + \| M \|_{L_\sigma^\infty(\R)} \Big) \| N \|_{L^\infty(\R)} \\
& \leq 6 \Big( 4 + \frac{1}{2} \big( \| \eta_1 \|_{L_\sigma^\infty(\R)} + \| \eta_0 \|_{L_\sigma^\infty(\R)} \big) \Big) \| N \|_{L^\infty(\R)}.
\end{align*}
We next combine Lemma~\ref{lem:exp-dec-eta-0} and Step~\ref{S1:N'} with the Sobolev embedding theorem in order to obtain
\begin{equation}
\label{eq:N-1}
\big\| 6 ( 1 - S_0^2 + M) N \big\|_{L_\sigma^\infty(\R)} \leq B_\sigma \big\| F \big\|_{L_\sigma^\infty(\R)}.
\end{equation}
Here as in the sequel, $B_\sigma$ denotes a positive number depending only on $\sigma$. Similarly, we derive from Lemma~\ref{lem:exp-dec-eta-0} and~\eqref{eq:U-unif} that
\begin{equation}
\label{eq:N-2}
\big\| 2 \langle F, S_0 + U \rangle_{\C} \big\|_{L_\sigma^\infty(\R)} \leq 2 \big( 1 + \| U \|_{L^\infty(\R)} \big) \big\| F \big\|_{L_\sigma^\infty(\R)} \leq B_\sigma \| F \|_{L_\sigma^\infty(\R)}.
\end{equation}
Concerning the term depending on the scalar product $\langle H, V \rangle_{\C}$, we argue as in the proof of Step~\ref{S1:N'}. Using~\eqref{eq:V-sqrt} and the fact that $\tau < \sigma$, we obtain
$$
\big\| \langle H, V \rangle_{\C} \big\|_{L_\tau^\infty(\R)} \leq \big\| H \|_{L_\sigma^\infty(\R)} \, \big\| V \big\|_{L_{\tau - \sigma}^\infty(\R)} \leq B_{\sigma, \tau} \big( \| f_1 \|_{L_\sigma^\infty(\R)} + \| f_0 \|_{L_\sigma^\infty(\R)} \big) \big\| V' \big\|_{L^2(\R)},
$$
where $B_{\sigma, \tau}$ denotes a positive number depending only on $\sigma$ and $\tau$. By condition~\eqref{eq:cond-exp-f0} and Lemma~\ref{lem:control-V-N}, we then get
\begin{equation}
\label{eq:N-3}
\big\| \langle H, V \rangle_{\C} \big\|_{L_\tau^\infty(\R)} \leq B_{\sigma, \tau} \| F \|_{L_\sigma^\infty(\R)}.
\end{equation}
Finally, we bound the term depending on $G_0$ by using~\eqref{eq:G0-unif}. This inequality indeed provides
\begin{align*}
\big\| G_0 \big\|_{L_\sigma^\infty(\R)} \leq \| F \|_{L_\sigma^\infty(\R)} \Big( & \frac{2 \sqrt{2}}{\sigma + \sqrt{2}} + \frac{1}{4 \sigma} \big( \| \varepsilon_1' \|_{L_\sigma^\infty(\R)} + \| \varepsilon_0' \|_{L_\sigma^\infty(\R)} \big) \Big) \\
& + \frac{1}{2 \sqrt{2 \sigma}} \big( \| f_1 \|_{L_\sigma^\infty(\R)} + \| f_0 \|_{L_\sigma^\infty(\R)} \big) \| V' \|_{L^2(\R)},
\end{align*}
so that, by Lemma~\ref{lem:exp-dec-eta-0}, condition~\eqref{eq:cond-exp-f0} and Lemma~\ref{lem:control-V-N},
\begin{equation}
\label{eq:N-4}
\big\| G_0 \big\|_{L_\sigma^\infty(\R)} \leq B \| F \|_{L_\sigma^\infty(\R)}.
\end{equation}
Collecting the estimates in~\eqref{eq:N-1},~\eqref{eq:N-2},~\eqref{eq:N-3} and~\eqref{eq:N-4}, and using the fact that $\tau < \sigma$, we conclude that
$$
\big\| \boN \big\|_{L_\tau^\infty(\R)} \leq B_{\sigma, \tau} \| F \|_{L_\sigma^\infty(\R)},
$$
which is enough to complete the proof of Step~\ref{S2:exp-N}.

We then deal with the function $V'$.

\begin{step}
\label{S3:exp-V'}
Given any number $0 < \tau < \sigma$, there exists a positive number $B_3$, depending only on $\sigma$ and $\tau$, such that
$$
\big\| V' \big\|_{L_\tau^\infty(\R)} \leq B_3 \big\| f_1 - f_0 \big\|_{L_\sigma^\infty(\R)}.
$$
\end{step}

In view of Lemma~\ref{lem:exp-dec-eta-0}, we first observe that the derivative $V'$ is in $L_\tau^\infty(\R)$. In case $V''$ is in $L_\tau^\infty(\R)$, it follows that
$$
V'(x) = - \int_x^{+ \infty} V''(t) \, dt = \int_{- \infty}^x V''(t) \, dt,
$$
so that
\begin{equation}
\label{eq:V'-elliptic}
\big\| V' \big\|_{L_\tau^\infty(\R)} \leq \frac{1}{\tau} \big\| V'' \big\|_{L_\tau^\infty(\R)}.
\end{equation}
As a consequence, it is enough to control the second order derivative $V''$ by the function $F$ in the space $L_\tau^\infty(\R)$. In this direction, we estimate each term in the formula for $V''$ given by~\eqref{eq:V}, that is
$$
V'' = - (1 - S_0^2) V + F - (S_0 + U) N - V M.
$$
For the first one, we write
$$
\big\| (1 - S_0^2) V \big\|_{L_\sigma^\infty(\R)} \leq 4 \big\| V \big\|_{L_{\sigma - \sqrt{2}}^\infty(\R)},
$$
so that by~\eqref{eq:V-sqrt} and Lemma~\ref{lem:control-V-N},
\begin{equation}
\label{eq:V'-1}
\big\| (1 - S_0^2) V \big\|_{L_\sigma^\infty(\R)} \leq \frac{2 \sqrt{2}}{\sqrt{(\sqrt{2} - \sigma) e}} \, \big\| V \big\|_{H(\R)} \leq B_\sigma \big\| F \big\|_{L_\sigma^\infty(\R)}.
\end{equation}
For the third one, we compute
$$
\big\| (S_0 + U) N \big\|_{L_\tau^\infty(\R)} \leq \big( 1 + \| U \|_{L^\infty(\R)} \big) \big\| N \big\|_{L_\tau^\infty(\R)},
$$
and we combine~\eqref{eq:U-unif}, Lemma~\ref{lem:exp-dec-eta-0} and Step~\ref{S2:exp-N} in order to obtain
$$
\big\| (S_0 + U) N \big\|_{L_\tau^\infty(\R)} \leq B_{\sigma, \tau} \big\| F \big\|_{L_\sigma^\infty(\R)}.
$$
Finally, we estimate the last term by
$$
\big\| V M \big\|_{L_\tau^\infty(\R)} \leq \big\| M \big\|_{L_\sigma^\infty(\R)} \, \big\| V \big\|_{L_{\tau - \sigma}^\infty(\R)} \leq \frac{1}{2} \Big( \big\| \eta_0 \big\|_{L_\sigma^\infty(\R)} + \big\| \eta_1 \big\|_{L_\sigma^\infty(\R)} \Big) \big\| V \big\|_{L_{\tau - \sigma}^\infty(\R)}.
$$
Arguing as for the first term and invoking Lemma~\ref{lem:exp-dec-eta-0}, we again have
$$
\big\| V M \big\|_{L_\tau^\infty(\R)} \leq B_{\sigma, \tau} \big\| F \big\|_{L_\sigma^\infty(\R)}.
$$
We finally complete the proof of Step~\ref{S3:exp-V'} by applying the three previous estimates to~\eqref{eq:V}, using the fact that $\tau < \sigma$.

We are now in position to conclude the proof of Lemma~\ref{lem:control-exp-V-N}.

\begin{step}
Conclusion.
\end{step}

We first control uniformly the function $V$. Applying Step~\ref{S3:exp-V'} and the fact that $V(0) = 0$, we indeed have
\begin{equation}
\label{eq:V-unif}
\big\| V \big\|_{L^\infty(\R)} \leq \frac{2}{\sigma} \big\| V' \big\|_{L_\frac{\sigma}{2}^\infty(\R)} \leq B_\sigma \big\| F \big\|_{L_\sigma^\infty(\R)}.
\end{equation}

We next use this estimate to improve the control on the function $N$. In view of~\eqref{eq:V-unif}, we can bound the scalar product $\langle H, V \rangle_{\C}$ in~\eqref{eq:N} by
$$
\big\| \langle H, V \rangle_{\C} \big\|_{L_\sigma^\infty(\R)} \leq \big\| H \|_{L_\sigma^\infty(\R)} \, \big\| V \big\|_{L^\infty(\R)} \leq B_\sigma \big( \| f_1 \|_{L_\sigma^\infty(\R)} + \| f_0 \|_{L_\sigma^\infty(\R)} \big) \big\| F \big\|_{L_\sigma^\infty(\R)},
$$
so that by condition~\eqref{eq:cond-exp-f0},
$$
\big\| \langle H, V \rangle_{\C} \big\|_{L_\sigma^\infty(\R)} \leq B_\sigma \| F \|_{L_\sigma^\infty(\R)}.
$$
Combining this inequality with~\eqref{eq:N-1},~\eqref{eq:N-2} and~\eqref{eq:N-4}, we now conclude that
$$
\big\| \boN \big\|_{L_\sigma^\infty(\R)} \leq B_\sigma \| F \|_{L_\sigma^\infty(\R)},
$$
so that by~\eqref{eq:N-elliptic} (with $\tau$ now equal to $\sigma$),
$$
\big\| N \big\|_{L_\sigma^\infty(\R)} \leq B_\sigma \| F \|_{L_\sigma^\infty(\R)}.
$$

Using this improved estimate, we next control the third term in the formula for $V'$ by
$$
\big\| (S_0 + U) N \big\|_{L_\sigma^\infty(\R)} \leq \big( 1 + \| U \|_{L^\infty(\R)} \big) \big\| N \big\|_{L_\sigma^\infty(\R)},
$$
so that, again by~\eqref{eq:U-unif} and Lemma~\ref{lem:exp-dec-eta-0},
\begin{equation}
\label{eq:V'-3-bis}
\big\| (S_0 + U) N \big\|_{L_\sigma^\infty(\R)} \leq B_\sigma \big\| F \big\|_{L_\sigma^\infty(\R)}.
\end{equation}
Moreover, we also deduce from~\eqref{eq:V-unif} and Lemma~\ref{lem:exp-dec-eta-0} that
$$
\big\| V M \big\|_{L_\sigma^\infty(\R)} \leq \frac{1}{2} \Big( \big\| \eta_0 \big\|_{L_\sigma^\infty(\R)} + \big\| \eta_1 \big\|_{L_\sigma^\infty(\R)} \Big) \big\| V \big\|_{L^\infty(\R)} \leq B_\sigma \big\| F \big\|_{L_\sigma^\infty(\R)}.
$$
Combining with~\eqref{eq:V'-1} and~\eqref{eq:V'-3-bis}, we are led to
$$
\big\| V'' \big\|_{L_\sigma^\infty(\R)} \leq B_\sigma \big\| F \big\|_{L_\sigma^\infty(\R)},
$$
and it follows from~\eqref{eq:V'-elliptic} (with $\tau$ replaced by $\sigma$) that
$$
\big\| V' \big\|_{L_\sigma^\infty(\R)} \leq B_\sigma \big\| F \big\|_{L_\sigma^\infty(\R)}.
$$
Since $\psi_1 - \psi_0 = V$ and $N_1 - N_0 = N$, this concludes the proof of Lemma~\ref{lem:control-exp-V-N}. \qed

%%%%%%%%%%%%%%%%%%%%%%%%%%%%
%%%%%%%%%%%%%%%%%%%%%%%%%%%%
%%%%%%%%%%%%%%%%%%%%%%%%%%%%
\section{Analysis in the higher Fourier sectors}
\label{sec:psi-neq-0}
%%%%%%%%%%%%%%%%%%%%%%%%%%%%
%%%%%%%%%%%%%%%%%%%%%%%%%%%%
%%%%%%%%%%%%%%%%%%%%%%%%%%%%

The analysis is based on the property that the operator $T_0$ defined by
$$
T_0(\psi) := - \psi'' - \psi (1 - |\psi_0|^2) + 2 \langle \psi_0, \psi \rangle_\C \, \psi_0,
$$
is a perturbation of the operator $L_0$ under the assumptions of Lemma~\ref{lem:invert-T-k}. We indeed compute
$$
T_0(\psi) = L_0(\psi) - \eta_0 \, \psi + 2 \langle \varepsilon_0, \psi \rangle_\C \, S_0 + 2 \langle S_0 + \varepsilon_0, \psi \rangle_\C \, \varepsilon_0,
$$
for any function $\psi \in H^2(\R)$, so that
\begin{equation}
\label{eq:cont-T-L-0}
\begin{split}
\big\| T_0(\psi) - L_0(\psi) \big\|_{L^2(\R)} & \leq \Big( \big\| \eta_0 \big\|_{L^\infty(\R)} + 2 \big\| \varepsilon_0 \big\|_{L^\infty(\R)} \big( 2 + \big\| \varepsilon_0 \big\|_{L^\infty(\R)} \big) \Big) \big\| \psi \big\|_{L^2(\R)} \\
& \leq \mu_0 \big( 5 + 2 \mu_0 \big) \big\| \psi \big\|_{L^2(\R)},
\end{split}
\end{equation}
when the function $\psi_0$ satisfies the uniform conditions in~\eqref{eq:unif-eta-eps-0}. Due to the fact that the operator $L_0$ has a unique negative eigenvalue $- 1/2$, it follows from~\eqref{eq:cont-T-L-0} that the invertibility properties of the operators $T_j = T_0 + (\pi j/d)^2$ when $d$ is close to $d_k$ and $\mu_0$ is small enough depend on the fact that $j < k$, $j = k$ or $j > k$. As a consequence, we split the analysis into three steps corresponding to these three conditions. Gathering these three steps eventually provides the statements in Lemma~\ref{lem:invert-T-k}.

%%%%%%%%%%%%%%%%%%%%%%%%%%%%%%%%%%%
%%%%%%%%%%%%%%%%%%%%%%%%%%%%%%%%%%%
\subsection{Analysis of the operators $T_j$ for $j > k$}
%%%%%%%%%%%%%%%%%%%%%%%%%%%%%%%%%%%
%%%%%%%%%%%%%%%%%%%%%%%%%%%%%%%%%%%

The analysis is based on the following lemma.

\begin{lemma}
\label{lem:exp-coer}
Consider a one-variable matrix-valued function $M \in L^\infty(\R, \boM_2(\R))$ and the corresponding bilinear form
$$
\boB(U, V) := \int_{\Omega_d} \Big( \langle \nabla U, \nabla V \rangle_{\C} + \langle M(U), V \rangle_{\C} \Big),
$$
on $H^1(\Omega_d, \C)$.\footnote{In this formula, the complex number $M(U)$ is naturally defined by the property that
$$
\begin{pmatrix} \text{Re}(M(U)) \\ \text{Im}(M(U)) \end{pmatrix} = M \begin{pmatrix} \text{Re}(U) \\ \text{Im}(U) \end{pmatrix}.
$$} For $k \geq 1$, set
$$
X_k := \big\{ \psi \in L^2(\Omega_d, \C) \text{ s.t. } \psi_j = 0 \text{ for any } 0 \leq j \leq k \big\},
$$
and assume that
$$
\boQ(U) := \boB(U, U) \geq \kappa \|U\|_{H^1}^2
$$
for any $U\in \boH^1(\Omega_d, \C) \cap X_k$ and some $\kappa > 0$.

Then, given any function $F \in \boX_k$, there exists a unique function $U \in \boH^2(\Omega_d, \C) \cap X_k$ such that
\begin{equation}
\label{eq:M}
- \Delta U + M(U) = F.
\end{equation}
Moreover, there exists a number $C > 0$, depending only on $\kappa$ and the uniform norm of the function $M$, such that
\begin{equation}
\label{up:M-L2}
\big\| U \|_{H^2(\Omega_d)} \leq C \, \big\| F \big\|_{L^2(\Omega_d)}.
\end{equation}
When the function $F$ is additionally in $L_\sigma^\infty(\Omega_d, \C)$ for some number $\sigma > 0$, the solution $U$ is in $L_\tau^\infty(\Omega_d, \C)$
for any $0 < \tau < \min\{ \sigma, \sqrt{\kappa} \}$, with
\begin{equation}
\label{up:M}
\big\| U \|_{L_\tau^\infty(\Omega_d)} \leq C \, \big\| F \big\|_{L_\sigma^\infty(\Omega_d)},
\end{equation}
for a further number $C > 0$, depending only on $d$, $\tau$, $\kappa - \tau^2$, $\sigma - \tau$ and the uniform norm of the function $M$.
\end{lemma}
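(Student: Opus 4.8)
The plan is to establish the three assertions of the lemma in turn. First, for solvability and the bound \eqref{up:M-L2}, I would apply the Lax--Milgram theorem on the Hilbert space $\boH^1(\Omega_d,\C)\cap X_k$: the form $\boB$ is continuous there (its zeroth-order part being controlled by the uniform norm of $M$) and coercive by hypothesis, while $V\mapsto\int_{\Omega_d}\langle F,V\rangle_\C$ is continuous, so there is a unique $U$ in this space with $\boB(U,V)=\int_{\Omega_d}\langle F,V\rangle_\C$ for all admissible $V$, and testing against $V=U$ gives $\|U\|_{H^1}\le\kappa^{-1}\|F\|_{L^2}$. Since $M=M(x)$ does not depend on $y$, the form $\boB$ decouples along the cosine Fourier modes in $y$, so for an arbitrary $V\in\boH^1(\Omega_d,\C)$ replacing $V$ by its projection onto modes of order $>k$ changes neither $\boB(U,V)$ (as $U\in X_k$) nor $\int_{\Omega_d}\langle F,V\rangle_\C$ (as $F\in X_k$); hence $U$ solves \eqref{eq:M} weakly. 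Rewriting $-\Delta U=F-M(U)\in L^2(\Omega_d)$ and invoking elliptic regularity on $\Omega_d$ (where Plancherel in $y$ gives $\|D^2U\|_{L^2}=\|\Delta U\|_{L^2}$, hence no dependence on $d$) yields $U\in\boH^2(\Omega_d,\C)\cap X_k$ with $\|U\|_{H^2}\le C(\kappa,\|M\|_{L^\infty})\|F\|_{L^2}$, and uniqueness in $\boH^2\cap X_k$ is immediate from coercivity.

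Next, for the weighted estimate \eqref{up:M}, I would fix $0<\tau<\min\{\sigma,\sqrt{\kappa}\}$ and, for $R>0$, choose an even $\phi_R\in C^\infty(\R)$ with $\phi_R(x)=|x|$ near the origin, $0\le\phi_R\le|x|$, $\phi_R$ constant for large $|x|$, $|\phi_R'|\le1$ and $|\phi_R''|\le C_0$ with $C_0$ universal, and set $W_R:=e^{\tau\phi_R}U$. Since $\phi_R$ depends only on $x$, is even, and $e^{\tau\phi_R}$ is a bounded positive scalar, $W_R$ again lies in $\boH^1(\Omega_d,\C)\cap X_k$, and the conjugated equation reads
$$
-\Delta W_R+2\tau\phi_R'\,\partial_x W_R+\big(\tau\phi_R''-\tau^2(\phi_R')^2\big)W_R+M(W_R)=e^{\tau\phi_R}F .
$$
Pairing with $\overline{W_R}$, taking real parts, and integrating by parts in $2\tau\int_{\Omega_d}\mathrm{Re}\big(\phi_R'\,\partial_x W_R\,\overline{W_R}\big)=\tau\int_{\Omega_d}\phi_R'\,\partial_x|W_R|^2=-\tau\int_{\Omega_d}\phi_R''|W_R|^2$ (the boundary term vanishes because $x\mapsto\int_{-d}^d|W_R|^2\,dy$ lies in $W^{1,1}(\R)$), the two occurrences of $\tau\int\phi_R''|W_R|^2$ cancel and one is left with
$$
\boQ(W_R)-\tau^2\!\int_{\Omega_d}(\phi_R')^2|W_R|^2=\int_{\Omega_d}\langle e^{\tau\phi_R}F,W_R\rangle_\C .
$$
Since $(\phi_R')^2\le1$, the coercivity hypothesis gives $(\kappa-\tau^2)\|W_R\|_{H^1}^2\le\|e^{\tau\phi_R}F\|_{L^2}\|W_R\|_{H^1}$, hence $\|W_R\|_{H^1}\le(\kappa-\tau^2)^{-1}\|e^{\tau\phi_R}F\|_{L^2}$; and since $\phi_R\le|x|$ and $\tau<\sigma$, both $\|e^{\tau\phi_R}F\|_{L^\infty(\Omega_d)}$ and $\|e^{\tau\phi_R}F\|_{L^2(\Omega_d)}$ are bounded by $C(d,\sigma-\tau)\|F\|_{L_\sigma^\infty(\Omega_d)}$ uniformly in $R$. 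Feeding these bounds into $-\Delta W_R=e^{\tau\phi_R}F-2\tau\phi_R'\partial_x W_R-(\tau\phi_R''-\tau^2(\phi_R')^2)W_R-M(W_R)$, whose right-hand side is then controlled in $L^2(\Omega_d)$ by $C\|F\|_{L_\sigma^\infty}$ uniformly in $R$, elliptic regularity and the uniform embedding $H^2(\Omega_d)\hookrightarrow L^\infty(\Omega_d)$ give $\|W_R\|_{L^\infty(\Omega_d)}\le C\|F\|_{L_\sigma^\infty}$ uniformly in $R$; letting $R\to\infty$, since $\phi_R(x)\to|x|$ pointwise, this yields $\|e^{\tau|x|}U\|_{L^\infty(\Omega_d)}\le C\|F\|_{L_\sigma^\infty(\Omega_d)}$, with $C$ depending only on $d$, $\tau$, $\kappa-\tau^2$, $\sigma-\tau$ and the uniform norm of $M$, which is \eqref{up:M}.

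The main obstacle is the weighted estimate, and in particular the ground-state substitution $U\mapsto e^{\tau\phi_R}U$: it converts the exponential weight into the harmless first-order term $2\tau\phi_R'\partial_x W_R$ together with the zeroth-order shift $-\tau^2(\phi_R')^2$, which is exactly why the effective coercivity constant degrades from $\kappa$ to $\kappa-\tau^2$ and the admissible decay rate is capped by $\sqrt{\kappa}$. Two points demand care. One must check that the cut-off functions $W_R$ still belong to the constrained space $\boH^1(\Omega_d,\C)\cap X_k$, so that the coercivity hypothesis may be applied to them; this holds precisely because $\phi_R$ and $M$ depend only on $x$ and thus leave both the $y$-Fourier modes and the symmetry constraints untouched. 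And one must keep every constant entering the cut-off scheme independent of $R$, so that the limit $R\to\infty$ produces a genuine pointwise exponential bound rather than merely a weighted $L^2$ estimate.
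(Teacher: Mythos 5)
Your proof is correct and follows essentially the same route as the paper: Lax--Milgram on $\boH^1(\Omega_d,\C)\cap X_k$ plus the observation that the operator stabilizes $X_k$, then a truncated exponential weight whose effect is to shift the coercivity constant from $\kappa$ to $\kappa-\tau^2$, followed by elliptic regularity, the embedding $H^2(\Omega_d)\hookrightarrow L^\infty(\Omega_d)$, and a passage to the limit in the truncation. The only (immaterial) differences are that the paper obtains your energy identity by testing the weak formulation against $e^{2f}U$ rather than conjugating the strong equation, which lets it use a merely Lipschitz weight $f_n(x)=\min\{\tau|x|,n\}$ and avoid $\phi_R''$ altogether --- note that your requirement that $\phi_R$ be both $C^\infty$ and equal to $|x|$ near the origin should be relaxed to a smoothing there.
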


\begin{proof}
Observe that $\boH^1(\Omega_d, \C) \cap X_k$ is naturally endowed with an Hilbert space structure as a closed subspace of $\boH^1(\Omega_d, \C)$. The bilinear form $\boB$ is coercive on this subspace and also continuous by boundedness of the function $M$. When $F$ is in $\boX_k$, we can apply the Lax-Milgram theorem in order to find a unique function $U \in \boH^1(\Omega_d, \C) \cap X_k$ such that
\begin{equation}
\label{eq:U-def}
\boB(U, V) = \int_{\Omega_d} \langle F, V \rangle_\C,
\end{equation}
for any $V \in \boH^1(\Omega_d, \C) \cap X_k$. Since the Laplacian operator and the multiplication operator by the matrix $M$ stabilize the subspace $X_k$, this identity is sufficient to establish that the function $U$ is a solution to~\eqref{eq:M}. The estimate in~\eqref{up:M-L2} then follows from standard elliptic theory.

Assume additionally that $F$ is in $L_\sigma^\infty(\Omega_d, \C)$. We can introduce an even bounded Lipschitz function $f : \R \to \R$ and set $\tilde{U}(x, y) = e^{f(x)} U(x, y)$, as well as $\tilde{V}(x, y) = e^{2 f(x)} U(x, y)$. Since the multiplication operators by the functions $e^f$ and $e^{2 f}$ also stabilize the subspace $X_k$, we deduce from the bounded Lipschitz nature of the function $f$ that the functions $\tilde{U}$ and $\tilde{V}$ are in $\boH^1(\Omega_d, \C) \cap X_k$. Inserting the function $\tilde{V}$ in~\eqref{eq:U-def}, we obtain
$$
\int_{\Omega_d} \Big( \big| \nabla \tilde{U} \big|^2 - (f')^2 \big| \tilde{U} \big|^2 + \big\langle M(\tilde{U}), \tilde{U} \big\rangle_\C \Big) = \boB \big( U, \tilde{V} \big) = \int_{\Omega_d} \langle \tilde{F}, \tilde{U} \rangle_\C,
$$
with $\tilde{F}(x, y) = e^{f(x)} F(x, y)$. Invoking the coercivity of the bilinear form $\boB$ on $\boH^1(\Omega_d, \C) \cap X_k$, and the Lipschitz nature of the function $f$, we are led to
\begin{equation}
\label{eq:tilde-U}
\big( \kappa - \| f' \|_{L^\infty(\R)}^2 \big) \, \big\| \tilde{U} \big\|_{H^1(\Omega_d)} \leq \big\| \tilde{F} \big\|_{L^2(\Omega_d)}.
\end{equation}

Now assume that $F$ is in $L_\sigma^\infty(\Omega_d, \C)$ and let $f(x) = f_n(x) = \min \{ \tau |x|, n\}$ for $0 < \tau <\sigma$ and some integer $n$. In this case, we have
\begin{equation}
\label{L2inf}
\big\| \tilde{F} \big\|_{L^2(\Omega_d)} \leq \big\| F \big\|_{L_\tau^2(\Omega_d)} \leq \sqrt{\frac{2 d}{\sigma - \tau}} \, \big\| F \big\|_{L_\sigma^\infty(\Omega_d)}.
\end{equation}
When $\tau^2 < \kappa$, we infer from~\eqref{eq:tilde-U} that
$$
\big\| \tilde{U} \big\|_{H^1(\Omega_d)} \leq \frac{\sqrt{2 d}}{(\kappa - \tau^2) \sqrt{\sigma - \tau}} \, \big\| F \big\|_{L_\sigma^\infty(\Omega_d)},
$$
and we can take the limit $n \to \infty$ in order to check that the function $U$, as well its partial derivatives $\partial_x U$ and $\partial_y U$, are in $L_\tau^2(\Omega_d)$, with
\begin{equation}
\label{eq:U-L-tau-2}
\big\| U \big\|_{L_\tau^2(\Omega_d)}^2 + \big\| \partial_x U \big\|_{L_\tau^2(\Omega_d)}^2 + \big\| \partial_y U \big\|_{L_\tau^2(\Omega_d)}^2 \leq \frac{2 d}{(\kappa - \tau^2)^2 (\sigma - \tau)} \, \big\| F \big\|_{L_\sigma^\infty(\Omega_d)}^2.
\end{equation}
Next we deduce from~\eqref{eq:M} and~\eqref{L2inf} that $\Delta U$ is also in $L_\tau^2(\Omega_d)$, with
\begin{equation}
\label{eq:Delta-U-L-tau-2}
\big\| \Delta U \big\|_{L_\tau^2(\Omega_d)} \leq \Big( 1 + \frac{\sqrt{2 d} \, \| M \|_{L^\infty(\R)}}{(\kappa - \tau^2) \sqrt{\sigma - \tau}} \Big) \, \big\| F \big\|_{L_\sigma^\infty(\Omega_d)}.
\end{equation}
The two previous estimates are sufficient to establish that the function $\check{U}(x, y) = e^{\tau \sqrt{x^2 + 1}} \linebreak[0] U(x, y)$ is in $H^2(\Omega_d, \C)$, so that by the Sobolev embedding theorem, it is bounded on $\Omega_d$. This shows that the function $U$ is indeed in $L_\tau^\infty(\Omega_d)$, and the estimate in~\eqref{up:M} follows from~\eqref{eq:U-L-tau-2} and~\eqref{eq:Delta-U-L-tau-2}.
\end{proof}

We apply Lemma~\ref{lem:exp-coer} to the self-adjoint bilinear form $\boB$ corresponding to the operator $T$, namely
\begin{equation}
\label{def:B-T}
\boB \big( \psi_1, \psi_2 \big) := \int_{\Omega_d} \Big( \langle \nabla \psi_1, \nabla \psi_2 \rangle_\C - \big( 1 - |\psi_0|^2 \big) \langle \psi_1, \psi_2 \rangle_\C + 2 \langle \psi_0, \psi_1 \rangle_\C \, \langle \psi_0, \psi_2 \rangle_\C \Big),
\end{equation}
for any functions $(\psi_1, \psi_2) \in H^1(\Omega_d, \C)^2$. The matrix-valued function $M$ in this formula is given by
$$
M = \begin{pmatrix} |\psi_0|^2 + 2 \text{Re}(\psi_0)^2 - 1 & 2 \text{Re}(\psi_0) \text{Im}(\psi_0) \\ 2 \text{Re}(\psi_0) \text{Im}(\psi_0) & |\psi_0|^2 + 2 \text{Im}(\psi_0)^2 - 1 \end{pmatrix}.
$$
When the function $\psi_0 \in \boU(\mu_0)$ satisfies the condition in~\eqref{eq:unif-eta-eps-0}, the function $M$ is uniformly bounded on $\Omega_d$ and its uniform norm only depends on the number $\mu_0$. In particular the self-adjoint bilinear form $\boB$ is continuous on $H^1(\Omega_d, \C)$ (with a norm depending only on $\mu_0$).

Moreover it follows from the Parseval formula that, for any $\psi\in X_k$,
$$
\int_{\Omega_d} |\partial_y \psi|^2 \geq \frac{\pi^2 (k + 1)^2}{d^2} \int_{\Omega_d} |\psi|^2.
$$
Arguing as for~\eqref{eq:cont-T-L-0}, we are led to
\begin{equation}
\label{eq:cont-B-j-0}
\boB(\psi, \psi) \geq \boQ_0(\psi) + \Big( \frac{\pi^2 (k + 1)^2}{d^2} - \mu_0 \big( 5 + 2 \mu_0 \big) \Big) \big\| \psi \big\|_{L^2(\Omega_d)}^2,
\end{equation}
where
$$
\boQ_0(\psi) := \int_{\Omega_d} \Big( |\partial_x \psi|^2 - \big( 1 - |S_0|^2 \big) |\psi|^2 + 2 \langle S_0, \psi \rangle_\C^2 \Big),
$$
stands for the quadratic form (on $H^1(\Omega_d, \C)$) corresponding to the operator $L_0$. Since this operator has a unique negative eigenvalue equal to $- 1/2$, we deduce from~\eqref{def:d-k} that
$$
\boQ_0(\psi) \geq - \frac{1}{2} \big\| \psi \big\|_{L^2(\Omega_d)}^2 = - \frac{\pi^2 k^2}{d_k^2} \big\| \psi \big\|_{L^2(\Omega_d)}^2.
$$
Hence we have
$$
\boB(\psi, \psi) \geq \Big( \frac{\pi^2 (k + 1)^2}{d^2} - \frac{\pi^2 k^2}{d_k^2} - \mu_0 \big( 5 + 2 \mu_0 \big) \Big) \big\| \psi \big\|_{L^2(\Omega_d)}^2.
$$
On the other hand we also deduce from applying~\eqref{eq:unif-eta-eps-0} to~\eqref{def:B-T} that
\begin{equation}
\label{eq:defect-coer-B}
\boB(\psi, \psi) \geq \big\| \nabla \psi \big\|_{L^2(\Omega_d)}^2 - (1 + \mu_0) \big\| \psi \big\|_{L^2(\Omega_d)}^2,
\end{equation}
so that, for any number $0 < \theta < 1$,
$$
\boB(\psi, \psi) \geq \theta \big\| \nabla \psi \big\|_{L^2(\Omega_d)}^2 + \Big( (1 - \theta) \Big( \frac{\pi^2 (k + 1)^2}{d^2} - \frac{\pi^2 k^2}{d_k^2} - \mu_0 \big( 5 + 2 \mu_0 \big) \Big) - \theta \big( 1 + \mu_0 \big) \Big) \big\| \psi \big\|_{L^2(\Omega_d)}^2.
$$
At this stage, we notice that
$$
(1 - \theta) \Big( \frac{\pi^2 (k + 1)^2}{d^2} - \frac{\pi^2 k^2}{d_k^2} - \mu_0 \big( 5 + 2 \mu_0 \big) \Big) - \theta \big( 1 + \mu_0 \big) \to (1 - \theta) \frac{(2 k + 1) \pi^2}{d_k^2} - \theta,
$$
as $d \to d_k$ and $\mu_0 \to 0$. For $\theta$ small enough, we obtain that, for $d$ close enough to $d_k$ and $\mu_0$ small enough,
$$
\boB(\psi, \psi) \geq \min \Big\{ \frac{(2 k + 1) \pi^2}{2 d_k^2},\theta \Big\} \, \big\| \psi \big\|_{H^1(\Omega_d)}^2.
$$
This coercivity estimate allows us to apply Lemma~\ref{lem:exp-coer} to conclude that, if the hypothesis of Lemma~\ref{lem:invert-T-k} are satisfied, then its conclusions hold under the additional assumption that $g\in X_k$.

Note here that, when a function $\psi_0 \in \boU(\mu_0)$ satisfies the condition in~\eqref{eq:cond-eta-eps-0} for a number $0 < \sigma < \sqrt{2}$, we have
\begin{equation}
\label{eq:control-mu-nu}
\| \eta_0 \|_{L^\infty(\R)} \leq \| \eta_0 \|_{L_\sigma^\infty(\R)} \leq \nu, \quad \text{ and } \quad \| \varepsilon_0 \|_{L^\infty(\R)} \leq \nu.
\end{equation}
In particular, we can decrease the value of the number $\nu_0$ in Lemma~\ref{lem:invert-T-k} so that the function $\psi_0$ satisfies the condition in~\eqref{eq:unif-eta-eps-0} for any $0 < \nu \leq \nu_0$. Moreover, the dependence on the number $\mu_0$ in the previous computations can be replaced by a dependence on the number $\nu_0$ as in the statement of Lemma~\ref{lem:invert-T-k}.

%%%%%%%%%%%%%%%%%%%%%%%%%%%%
%%%%%%%%%%%%%%%%%%%%%%%%%%%%
\subsection{Analysis of the operator $T_k$}
%%%%%%%%%%%%%%%%%%%%%%%%%%%%
%%%%%%%%%%%%%%%%%%%%%%%%%%%%

For $k \geq 1$, we set
$$
Z_k := \big\{ \psi \in L^2(\Omega_d, \C) \text{ s.t. } \psi_\ell = 0 \text{ for any } \ell \neq k \big\},
$$
and we focus on the restriction of the self-adjoint bilinear form $\boB$ to the space $\boH^1(\Omega_d, \C) \cap Z_k$. Recall that this bilinear form is continuous on $H^1(\Omega_d, \C)$, with norm depending only on $\mu_0$. Arguing as for~\eqref{eq:cont-B-j-0}, we obtain
$$
\boB(\phi, \phi) \geq \boQ_0(\phi) + \Big( \frac{\pi^2 k^2}{d^2} - \mu_0 \big( 5 + 2 \mu_0 \big) \Big) \big\| \phi \big\|_{L^2(\Omega_d)}^2,
$$
for any function $\phi \in \boH^1(\Omega_d, \C) \cap Z_k$. When $\phi$ is also in $H_k$, the quadratic form $\boQ_0(\phi)$ is non-negative, so that
$$
\boB(\phi, \phi) \geq \Big( \frac{\pi^2 k^2}{d^2} - \mu_0 \big( 5 + 2 \mu_0 \big) \Big) \big\| \phi \big\|_{L^2(\Omega_d)}^2.
$$
Using~\eqref{eq:defect-coer-B}, we obtain
\begin{equation}
\label{eq:coer-B-k}
\boB(\phi, \phi) \geq \frac{1}{4} \big\| \nabla \phi \big\|_{L^2(\Omega_d)}^2 + \frac{1}{4} \Big( \frac{3 \pi^2 k^2}{d^2} - 1 - 16 \mu_0 - 6 \mu_0^2 \Big) \big\| \phi \big\|_{L^2(\Omega_d)}^2 \geq \frac{1}{16} \big\| \phi \big\|_{H^1(\Omega_d)}^2,
\end{equation}
for $d$ close enough to $d_k$ and $\mu_0$ small enough. Given a function $g \in \boL^2(\Omega_d, \C) \cap Z_k$, we can therefore invoke the Lax-Milgram theorem in order to find a unique function $\psi \in \boH^1(\Omega_d, \C) \cap Z_k \cap H_k$ such that
\begin{equation}
\label{eq:sol-psi-k}
\boB(\psi, \phi) = \int_{\Omega_d} \langle g, \phi \rangle_\C, \quad \forall \phi \in \boH^1(\Omega_d, \C) \cap Z_k \cap H_k.
\end{equation}
This identity can be rephrased as the fact that there exists a unique function $\psi \in \boH^1(\Omega_d, \C) \cap Z_k \cap H_k$ such that
\begin{equation}
\label{eq:eq-psi-k}
- \Delta \psi - \big( 1 - |\psi_0|^2 \big) \psi + 2 \langle \psi_0, \psi \rangle_\C \, \psi_0 - g = \frac{\chi_k}{\| \chi_k \|_{L^2(\Omega_d)}^2} \bigg( \boB(\psi, \chi_k) - \int_{\Omega_d} \langle g, \chi_k \rangle_\C \bigg).
\end{equation}
By standard elliptic regularity theory, we check that the function $\psi$ is actually in $H^2(\Omega_d, \C)$, and the previous equation can be expressed as
$$
\pi_k \big( T(\psi) - g \big) = 0.
$$
It then follows from~\eqref{eq:coer-B-k},~\eqref{eq:sol-psi-k},~\eqref{eq:eq-psi-k} and standard elliptic regularity theory that
\begin{equation}
\label{eq:estim-inv-boT-k}
\big\| \psi \big\|_{H^2(\Omega_d)} \leq C \big\| g \big\|_{L^2(\Omega_d)},
\end{equation}
for some number $C$ depending only on $\mu_0$.

When the function $g$ is additionally in $L_\sigma^\infty(\Omega_d)$, we argue as in the proof of Lemma~\ref{lem:exp-coer} in order to control the function $\psi$ in $L_\tau^\infty(\Omega_d)$ for any $0 < \tau < \sigma$. We introduce an even bounded Lipschitz function $f : \R \to \R$, and consider the functions $\tilde{\psi}(x, y) := e^{f(x)} \psi(x, y)$ and $\xi(x, y) := e^{2 f(x)} \psi(x, y)$, which both remain in $\boH^1(\Omega_d, \C) \cap Z_k$. We also introduce the numbers
$$
\lambda_k := \frac{\langle \tilde{\psi}, \chi_k \rangle_{L^2(\Omega_d)}}{\| \chi_k \|_{L^2(\Omega_d)}^2}, \quad \text{ and } \quad \mu_k := \frac{\langle \xi, \chi_k \rangle_{L^2(\Omega_d)}}{\| \chi_k \|_{L^2(\Omega_d)}^2},
$$
so that $\pi_k(\tilde{\psi}) = \tilde{\psi} - \lambda_k \chi_k$ and $\pi_k(\xi) = \xi - \mu_k \chi_k$. Inserting the function $ \pi_k(\xi) = \xi - \mu_k \chi_k$ into~\eqref{eq:sol-psi-k}, we obtain
$$
\boB(\psi, \xi) = \mu_k \boB(\psi, \chi_k) + \int_{\Omega_d} \langle g, \xi \rangle_\C - \mu_k \int_{\Omega_d} \langle g, \chi_k \rangle_\C.
$$
We next check that
$$
\boB(\psi, \xi) = \boB \big( \tilde{\psi}, \tilde{\psi} \big) - \int_{\Omega_d} \big( f' \big)^2 \big| \tilde{\psi} \big|^2,
$$
so that
$$
\boB \big( \tilde{\psi}, \tilde{\psi} \big) = \mu_k \boB(\psi, \chi_k) + \int_{\Omega_d} \big( f' \big)^2 \big| \tilde{\psi} \big|^2 + \int_{\Omega_d} \big\langle e^f g, \tilde{\psi} \big\rangle_\C - \mu_k \int_{\Omega_d} \langle g, \chi_k \rangle_\C.
$$
Since
$$
\frac{1}{16} \big\| \pi_k(\tilde{\psi}) \big\|_{H^1(\Omega_d)}^2 \leq \boB \big( \pi_k(\tilde{\psi}), \pi_k(\tilde{\psi}) \big) = \boB \big( \tilde{\psi}, \tilde{\psi} \big) - 2 \lambda_k \boB \big( \tilde{\psi}, \chi_k \big) + \lambda_k^2 \boB \big( \chi_k, \chi_k \big),
$$
by~\eqref{eq:coer-B-k}, we obtain
\begin{align*}
\frac{1}{16} \big\| \tilde{\psi} \big\|_{H^1(\Omega_d)}^2 \leq & \frac{1}{16} \Big( \big\| \pi_k(\tilde{\psi}) \big\|_{H^1(\Omega_d)}^2 + 2 \lambda_k \big\langle \tilde{\psi}, \chi_k \big\rangle_{H^1(\Omega_d)} - \lambda_k^2 \big\| \chi_k \big\|_{H^1(\Omega_d)}^2 \Big) \\
\leq & \mu_k \boB(\psi, \chi_k) + \int_{\Omega_d} \big( f' \big)^2 \big| \tilde{\psi} \big|^2 + \int_{\Omega_d} \big\langle e^f g, \tilde{\psi} \big\rangle_\C - \mu_k \int_{\Omega_d} \langle g, \chi_k \rangle_\C \\
& - 2 \lambda_k \boB \big( \tilde{\psi}, \chi_k \big) + \lambda_k^2 \boB \big( \chi_k, \chi_k \big) + \frac{\lambda_k}{8} \big\langle \tilde{\psi}, \chi_k \big\rangle_{H^1(\Omega_d)} - \frac{\lambda_k^2}{16} \big\| \chi_k \big\|_{H^1(\Omega_d)}^2.
\end{align*}
At this stage we first check that
$$
\big| \lambda_k \big| \leq \frac{\| \psi \|_{L^2(\Omega_d)} \, \| e^f \chi_k \|_{L^2(\Omega_d)}}{\| \chi_k \|_{L^2(\Omega_d)}^2}, \quad \text{ and } \quad \big| \mu_k \big| \leq \frac{\| \psi \|_{L^2(\Omega_d)} \, \| e^{2 f} \chi_k \|_{L^2(\Omega_d)}}{\| \chi_k \|_{L^2(\Omega_d)}^2}.
$$
From~\eqref{eq:unif-eta-eps-0} and~\eqref{def:B-T} we also have, for any $\phi_1$, $\phi_2\in H^1(\Omega_d,\C)$,
$$
\big| \boB(\phi_1, \phi_2) \big| \leq \big( 3 + 5 \mu_0 + 2 \mu_0^2 \big) \, \big\| \phi_1 \big\|_{H^1(\Omega_d)} \, \big\| \phi_2 \big\|_{H^1(\Omega_d)},
$$
Combining these estimates with~\eqref{eq:estim-inv-boT-k}, and using the Young inequality $2 a b \leq a^2 + b^2$, we can find a number $C$, depending only and (continuously) on $k$ and $\mu_0$, such that
$$
\Big( \frac{1}{32} - \big\| f' \big\|_{L^\infty(\R)}^2 \Big) \big\| \tilde{\psi} \big\|_{H^1(\Omega_d)}^2 \leq C \bigg( \big\| e^f g \big\|_{L^2(\Omega_d)}^2 + \big\| g \big\|_{L^2(\Omega_d)}^2 \Big( \big\| e^f \chi_k \big\|_{L^2(\Omega_d)}^2 + \big\| e^{2 f} \chi_k \big\|_{L^2(\Omega_d)} \Big) \bigg).
$$
Assuming that $0 < \sigma_0 < 1/(2 \sqrt{2})$, and letting $f(x) = f_n(x) = \min \{ \tau |x|, n\}$ for $0 < \tau <\sigma$ and some integer $n$, we observe that
$$
\big\| e^f \chi_k \big\|_{L^2(\Omega_d)}^2 + \big\| e^{2 f} \chi_k \big\|_{L^2(\Omega_d)} \leq C,
$$
for a further number $C$ depending only on $k$ and $\sigma_0$. Arguing as for~\eqref{L2inf}, we are led to
$$
\Big( \frac{1}{32} - \tau^2 \Big) \big\| \tilde{\psi} \big\|_{H^1(\Omega_d)}^2 \leq C \, \big\| e^f g \big\|_{L^2(\Omega_d)}^2 \leq \frac{2 C d}{\sigma - \tau} \, \big\| g \big\|_{L_\sigma^\infty(\Omega_d)}^2.
$$
Decreasing if necessary the value of $\sigma_0$ so that $\sigma_0^2 < 1/32$, we conclude that
$$
\big\| \tilde{\psi} \big\|_{H^1(\Omega_d)}\leq C \, \big\| g \big\|_{L_\sigma^\infty(\Omega_d)},
$$
the number $C$ now depending also on $\delta_1$, $\sigma$ and $\tau$. Taking the limit $n \to \infty$, we deduce that
$$
\big\| \psi \big\|_{L_\tau^2(\Omega_d)} + \big\| \nabla \psi \big\|_{L_\tau^2(\Omega_d)} \leq C \, \big\| g \big\|_{L_\sigma^\infty(\Omega_d)}.
$$
Since $\psi$ is in $Z_k$, we finally invoke the one-variable Sobolev embedding theorem in order to conclude that
\begin{equation}
\label{eq:estim-inv-boT-exp-k}
\big\| \psi \big\|_{L_\tau^\infty(\Omega_d)} \leq C \big\| g \big\|_{L_\sigma^\infty(\Omega_d)}.
\end{equation}
Note that we can argue as in~\eqref{eq:control-mu-nu} in order to replace the dependence on $\mu_0$ in the constant $C$ by a dependence on $\nu_0$.

The estimates~\eqref{eq:estim-inv-boT-k} and~\eqref{eq:estim-inv-boT-exp-k} show that the conclusion of Lemma~\ref{lem:invert-T-k} hold under the additional assumption that $g\in Z_k$.

%%%%%%%%%%%%%%%%%%%%%%%%%%%%%%%%%%%
%%%%%%%%%%%%%%%%%%%%%%%%%%%%%%%%%%%
\subsection{Analysis of the operators $T_j$ for $j < k$}
%%%%%%%%%%%%%%%%%%%%%%%%%%%%%%%%%%%
%%%%%%%%%%%%%%%%%%%%%%%%%%%%%%%%%%%

When $j<k$, the analysis is a little more involved due to the fact that the operator $T_j$ has a negative eigenvalue. Fix a number $1 \leq j < k$, and consider a function $g_j \in \boL^2(\R, \C)$. In order to solve the equation $T_j(\psi_j) = g_j$, we decompose the function $g_j$ as $g_j = i \beta \chi_0 + \pi_0(g_j)$, with $\beta \in \R$. Here the notation $\pi_0$ refers to the orthogonal projection on $H_0$, which is given by
$$
\pi_0(f) := f - \frac{\langle f, i \chi_0 \rangle_{L^2(\R)}}{\| \chi_0 \|_{L^2(\R)}^2} \, i \chi_0, \quad \forall f \in L^2(\R, \C).
$$
Similarly, we look for a solution $\psi_j$ of the form $\psi_j = i \lambda \chi_0 + z$, with $z = \pi_0(\psi_j)$ and $\lambda \in \R$. By definition of the operator $T_j$, and since $L_0(i \chi_0) = - i \chi_0/2$, the equation $T_j(\psi_j) = g_j$ is then equivalent to the system
\begin{equation}
\label{eq:T-j-equiv}
\begin{cases}
\lambda \Big( \frac{\pi^2 j^2}{d^2} - \frac{1}{2} + \alpha_0 \Big) = \beta - \frac{1}{\| \chi_0 \|_{L^2(\R)}^2} \, \langle T_0(z) - L_0(z), i \chi_0 \rangle_{L^2(\R)}, \\
\pi_0 \Big( T_0(z) + \frac{\pi^2 j^2}{d^2} z - g_j + \lambda \big( T_0(i \chi_0) - L_0(i \chi_0) \big) \Big) = 0.
\end{cases}
\end{equation}
In this system, we have set $T_0(i \chi_0) - L_0(i \chi_0) = i \alpha_0 \chi_0 + \pi_0(T_0(i \chi_0) - L_0(i \chi_0))$, with $\alpha_0 \in \R$. Going back to~\eqref{eq:cont-T-L-0}, we observe that
$$
\big| \alpha_0 \big| = \bigg| \frac{\langle T_0(i \chi_0) - L_0(i \chi_0), i \chi_0 \rangle_{L^2(\R)}}{\| \chi_0 \|_{L^2(\R)}^2} \bigg| \leq \mu_0 \big( 5 + 2 \mu_0 \big).
$$
Since $1/2 = \pi^2 k^2/d_k^2$, we can choose $d - d_k$ and $\mu_0$ small enough such that
\begin{equation}
\label{eq:bound-invert-lambda}
\frac{1}{2} - \frac{\pi^2 j^2}{d^2} - \alpha_0 \geq \frac{(2 k - 1) \pi^2}{2 d_k^2} \geq \frac{1}{4 k} > 0.
\end{equation}
In this case, the unique solution of the first equation in~\eqref{eq:T-j-equiv} is given by
\begin{equation}
\label{eq:lambda}
\lambda = \frac{1}{\frac{\pi^2 j^2}{d^2} - \frac{1}{2} + \alpha_0} \Big( \beta - \frac{\langle T_0(z) - L_0(z), i \chi_0 \rangle_{L^2(\R)}}{\| \chi_0 \|_{L^2(\R)}^2} \Big),
\end{equation}
and we can insert this expression into the second equation in~\eqref{eq:T-j-equiv} in order to obtain
\begin{equation}
\label{eq:z}
\begin{split}
\pi_0 \bigg( T_0(z) + \frac{\pi^2 j^2}{d^2} z - \frac{\langle T_0(z) - L_0(z), i \chi_0 \rangle_{L^2(\R)}}{\| \chi_0 \|_{L^2(\R)}^2 \big( \frac{\pi^2 j^2}{d^2} - \frac{1}{2} + \alpha_0 \big)} & \Big( T_0(i \chi_0) - L_0(i \chi_0) \Big) \bigg) \\
= \pi_0 \bigg( g_j - & \frac{\beta}{\frac{\pi^2 j^2}{d^2} - \frac{1}{2} + \alpha_0} \Big( T_0(i \chi_0) - L_0(i \chi_0) \Big) \bigg).
\end{split}
\end{equation}
In order to invert this last equation, we now introduce the self-adjoint bilinear form
\begin{align*}
\tilde{\boB}_j \big( z_1, z_2 \big) := \int_\R \Big( \langle z_1', z_2' \rangle_\C & + \Big( \frac{\pi^2 j^2}{d^2} - 1 + |\psi_0|^2 \Big) \langle z_1, z_2 \rangle_\C + 2 \langle \psi_0, z_1 \rangle_\C \, \langle \psi_0, z_2 \rangle_\C \Big) \\
- \frac{1}{\| \chi_0 \|_{L^2(\R)}^2 \big( \frac{2 \pi^2 j^2}{d^2} - 1 + 2 \alpha_0 \big)} \Big( & \langle T_0(z_1) - L_0(z_1), i \chi_0 \rangle_{L^2(\R)} \, \langle T_0(i \chi_0) - L_0(i \chi_0), z_2 \rangle_{L^2(\R)} \\
& + \langle T_0(z_2) - L_0(z_2), i \chi_0 \rangle_{L^2(\R)} \, \langle T_0(i \chi_0) - L_0(i \chi_0), z_1 \rangle_{L^2(\R)} \Big),
\end{align*}
for any functions $(z_1, z_2) \in (\boH^1(\R) \cap H_0)^2$. This bilinear form is continuous on the subspace $H^1(\R) \cap H_0$. Since
\begin{equation}
\label{eq:cont-error-form}
\bigg| \frac{\langle T_0(z) - L_0(z), i \chi_0 \rangle_{L^2(\R)} \, \langle T_0(i \chi_0) - L_0(i \chi_0), z \rangle_{L^2(\R)}}{\| \chi_0 \|_{L^2(\R)}^2 \big( \frac{\pi^2 j^2}{d^2} - \frac{1}{2} + \alpha_0 \big)} \bigg| \leq 4 k \mu_0^2 (5 + 2 \mu_0)^2 \, \big\| z \big\|_{L^2(\R)}^2,
\end{equation}
by~\eqref{eq:cont-T-L-0} and~\eqref{eq:bound-invert-lambda}, it also satisfies
$$
\tilde{\boB}_j \big( z, z \big) \geq \Big( \frac{\pi^2 j^2}{d^2} - \mu_0 \big( 5 + 2 \mu_0 \big) \big( 1 + 4 k \mu_0 (5 + 2 \mu_0) \big) \Big) \big\| z \big\|_{L^2(\R)}^2 \geq \frac{\pi^2}{2 d_k^2} \big\| z \big\|_{L^2(\R)}^2,
$$
when $z \in \boH^1(\R) \cap H_0$, and for $d - d_k$ and $\mu_0$ small enough. Moreover it follows from the definition of $\tilde{B}_j$,~\eqref{eq:unif-eta-eps-0} and~\eqref{eq:cont-error-form} that
$$
\tilde{\boB}_j \big( z, z \big) \geq \big\| z' \big\|_{L^2(\R)}^2 + \Big( \frac{\pi^2}{d^2} - 1 - \mu_0 - 4 k \mu_0^2 (5 + 2 \mu_0)^2 \Big) \big\| z \big\|_{L^2(\R)}^2 \geq \big\| z' \big\|_{L^2(\R)}^2 + \Big( \frac{\pi^2}{2 d_k^2} - 1 \Big) \big\| z \big\|_{L^2(\R)}^2,
$$
again for $d - d_k$ and $\mu_0$ small enough. We conclude that
\begin{equation}
\label{eq:coer-boB-tilde}
\tilde{\boB}_j \big( z, z \big) \geq \frac{\pi^2}{4 d_k^2} \big\| z \big\|_{H^1(\R)}^2,
\end{equation}
and we can derive from the Lax-Milgram theorem the existence of a unique solution $z_j \in \boH^1(\R) \cap H_0$ to~\eqref{eq:z}. Then we can go back to~\eqref{eq:lambda} in order to construct a unique solution $\psi_j := i \lambda_j \chi_0 + z_j \in \boH^1(\R)$ to the equation $T_j(\psi_j) = g_j$.

Moreover, it follows from~\eqref{eq:cont-T-L-0},~\eqref{eq:bound-invert-lambda} and~\eqref{eq:coer-boB-tilde} that the function $z$ satisfies
\begin{equation}
\label{eq:estim-z-j}
\big\| z_j \big\|_{H^1(\R)} \leq \frac{4 d_k^2}{\pi^2} \Big( \big\| g_j \big\|_{L^2(\R)} + 4 k \mu_0 (5 + 2 \mu_0) \, \big\| \beta \chi_0 \big\|_{L^2(\R)} \Big).
\end{equation}
Similarly we can estimate~\eqref{eq:lambda} so as to obtain
\begin{equation}
\label{eq:estim-lambda-j}
|\lambda_j| \leq 4 k \Big( |\beta| + \frac{\mu_0 (5 + 2 \mu_0)}{\| \chi_0 \|_{L^2(\R)}} \, \big\| z_j \big\|_{L^2(\R)} \Big).
\end{equation}
Using the definition of the number $\beta$, we conclude that there exists a number $C > 0$, depending only on $k$ and $\mu_0$, such that
\begin{equation}
\label{eq:estim-inv-boT-j<k}
\big\| \psi_j \big\|_{H^1(\R)} \leq C \big\| g_j \big\|_{L^2(\R)}.
\end{equation}
The fact that the function $\psi_j$ is actually in $H^2(\R)$ follows from applying standard elliptic theory to the equation $T_j(\psi_j) = g_j$, as well as the fact that we can replace the $H^1$-norm by the $H^2$-norm in the previous inequality.

When the function $g_j$ is additionally in $L_\sigma^\infty(\R)$, we argue as before to control the function $\psi_j$ in $L_\tau^\infty(\R)$ for any $0 < \tau < \sigma$. We introduce an even bounded Lipschitz function $f : \R \to \R$ and consider the functions $\tilde{z}_j := e^f z_j$, as well as $\zeta_j := e^{2 f} z_j$. We also set
$$
\mu_j := \frac{\langle \tilde{z}_j, i \chi_0 \rangle_{L^2(\R)}}{\| \chi_0 \|_{L^2(\R)}^2}, \quad \text{ and } \quad \nu_j := \frac{\langle \zeta_j, i \chi_0 \rangle_{L^2(\R)}}{\| \chi_0 \|_{L^2(\R)}^2},
$$
so that $\pi_0(\tilde{z}_j) = \tilde{z}_j - i \mu_j \chi_0$ and $\pi_0(\zeta_j) = \zeta_j - i \nu_j \chi_0$. Observe in particular that
$$
\big| \mu_j \big| \leq C \, \big\| g_j \big\|_{L^2(\R)} \, \big\| e^f \chi_0 \|_{L^2(\R)}, \quad \text{ and } \quad \big| \nu_j \big| \leq C \, \big\| g_j \big\|_{L^2(\R)} \, \big\| e^{2 f} \chi_0 \|_{L^2(\R)},
$$
for some number $C$, depending only on $k$ and $\mu_0$. Recalling that
$$
\tilde{\boB}_j(z_j, z) = \int_\R \langle g_j, z \rangle_\C, \quad \forall z \in \boH^1(\R) \cap H_0,
$$
and choosing $z = \pi_0(\zeta_j)$ in this identity, we can write
\begin{align*}
\tilde{\boB}_j \big( z_j, \zeta_j \big) = \nu_j \tilde{\boB}_j \big( z_j, i \chi_0 \big) + \int_\R \langle g_j, \zeta_j \rangle_\C - \nu_j \int_\R \langle g_j, i \chi_0 \rangle_\C.
\end{align*}
Since
\begin{align*}
\tilde{\boB}_j \big( \pi_0(\tilde{z}_j), \pi_0(\tilde{z}_j) \big) = & \tilde{\boB}_j \big( \tilde{z}_j, \tilde{z}_j \big) - 2 \mu_j \tilde{\boB}_j \big( \tilde{z}_j, i \chi_0 \big) + \mu_j^2 \tilde{\boB}_j \big( i \chi_0, i \chi_0 \big) \\
= & \tilde{\boB}_j \big( z_j, \zeta_j \big) + \int_\R \big( f' \big)^2 \big| \tilde{z}_j \big| - 2 \mu_j \tilde{\boB}_j \big( \tilde{z}_j, i \chi_0 \big) + \mu_j^2 \tilde{\boB}_j \big( i \chi_0, i \chi_0 \big),
\end{align*}
we can invoke~\eqref{eq:coer-boB-tilde} in order to obtain
\begin{align*}
\frac{\pi^2}{4 d_k^2} \big\| \tilde{z}_j \big\|_{H^1(\R)}^2 \leq & \frac{\pi^2 \, \mu_j}{2 d_k^2} \big\langle \tilde{z}_j, i \chi_0 \big\rangle_{H^1(\R)} - \frac{\pi^2 \, \mu_j^2}{4 d_k^2} \big\| \chi_0 \big\|_{H^1(\R)}^2 + \nu_j \tilde{\boB}_j \big( z_j, i \chi_0 \big) + \int_\R \langle e^f g_j, \tilde{z}_j \rangle_\C \\
& - \nu_j \int_\R \langle g_j, i \chi_0 \rangle_\C + \int_\R \big( f' \big)^2 \big| \tilde{z}_j \big| - 2 \mu_j \tilde{\boB}_j \big( \tilde{z}_j, i \chi_0 \big) + \mu_j^2 \tilde{\boB}_j \big( i \chi_0, i \chi_0 \big).
\end{align*}
Combining~\eqref{eq:unif-eta-eps-0},~\eqref{eq:cont-T-L-0}, and~\eqref{eq:bound-invert-lambda}, we check that
$$
\Big| \tilde{\boB}_j \big( z, \zeta \big) \Big| \leq \Big( \frac{\pi^2 k^2}{d^2} + 4 + 5 \mu_0 + 2 \mu_0^2 + 4 k \mu_0^2 (5 + 2 \mu_0)^2 \Big) \, \big\| z \big\|_{H^1(\R)} \, \big\| \zeta \big\|_{H^1(\R)}.
$$
Arguing as in the case $j = k$, we conclude that we can find a number $C > 0$, depending only on $k$, $\mu_0$ and $\delta_1$, such that
$$
\Big( \frac{\pi^2}{4 d_k^2} - \big\| f' \big\|_{L^\infty(\R)}^2 \Big) \big\| \tilde{z}_j \big\|_{H^1(\R)}^2 \leq C \bigg( \big\| e^f g_j \big\|_{L^2(\R)}^2 + \big\| g_j \big\|_{L^2(\R)}^2 \Big( \big\| e^f \chi_0 \big\|_{L^2(\R)}^2 + \big\| e^{2 f} \chi_0 \big\|_{L^2(\R)} \Big) \bigg).
$$
When $\sigma_0 < \min \{ 1/(2 \sqrt{2}), \pi/(2 d_k) \}$ and $0 < \tau < \sigma$, we derive as for~\eqref{eq:estim-inv-boT-k} from this estimate that
$$
\big\| z_j \big\|_{L_\tau^\infty(\R)} \leq C \big\| g_j \big\|_{L_\sigma^\infty(\R)},
$$
for a number $C$, depending also on $\delta_1$, $\sigma$ and $\tau$.

Recall here that, by~\eqref{eq:estim-z-j} and~\eqref{eq:estim-lambda-j}, we have
$$
\big\| i \lambda_j \chi_0 \big\|_{L_\tau^\infty(\R)} \leq \big| \lambda_j \big| \big\| \chi_0 \big\|_{L_\tau^\infty(\R)} \leq C \| g_j \|_{L^2(\R)},
$$
for a further number $C > 0$, depending on $k$, $\mu_0$ and $\tau$. In particular we can conclude that
\begin{equation}
\label{eq:estim-boT-exp-j<k}
\big\| \psi_j \big\|_{L_\tau^\infty(\R)} \leq C \big\| g_j \big\|_{L_\sigma^\infty(\R)}.
\end{equation}
The number $C$ in this inequality depends as before on $k$, $\delta_1$, $\nu_0$, $\sigma_0$, $\sigma$ and $\tau$ (using~\eqref{eq:control-mu-nu} for replacing the dependence on $\mu_0$ by a dependence on $\nu_0$).

Estimates~\eqref{eq:estim-inv-boT-j<k} and~\eqref{eq:estim-boT-exp-j<k} show that the conclusions of Lemma~\ref{lem:invert-T-k} hold under the additional assumption that $g\in Z_j$, i.e. that $g(x,y) = g_j(x)\cos(\pi jy/d)$, for some $j<k$.

%%%%%%%%%%%%%%%%%%%%%%%%%%%%%%%%%%%%
%%%%%%%%%%%%%%%%%%%%%%%%%%%%%%%%%%%%
\subsection{End of the proof of Lemma~\ref{lem:invert-T-k}}
%%%%%%%%%%%%%%%%%%%%%%%%%%%%%%%%%%%%
%%%%%%%%%%%%%%%%%%%%%%%%%%%%%%%%%%%%

Assume the hypothesis of Lemma~\ref{lem:invert-T-k} are satisfied. To solve the equation $\pi_k(T(w) - g)=0$, we project it on the Fourier sectors $j>k$, $j = k$ and $j<k$. The previous analysis shows that in each of these three sectors the equation can be solved and that the solutions satisfy the estimates in~\eqref{eq:proj-inv-L2} and~\eqref{eq:exp-bound-invert-T}. Adding these three solutions gives the desired solution, which will satisfy the required estimates. \qed

%%%%%%%%%%%%%%%%%%%
%%%%%%%%%%%%%%%%%%%
%%%%%%%%%%%%%%%%%%%
\section{Fixed point argument}
\label{sec:fixed-point}
%%%%%%%%%%%%%%%%%%%
%%%%%%%%%%%%%%%%%%%
%%%%%%%%%%%%%%%%%%%

%%%%%%%%%%%%%%%%%%%%%%%%%%%%%%%%
%%%%%%%%%%%%%%%%%%%%%%%%%%%%%%%%
\subsection{Proof of Lemma~\ref{lem:non-linearity}}
%%%%%%%%%%%%%%%%%%%%%%%%%%%%%%%%
%%%%%%%%%%%%%%%%%%%%%%%%%%%%%%%%

Note first that the maps $f_0(\psi_0, w)$ and $g(\psi_0, w)$ are in $\boL^\infty(\R)$, respectively in $\boL^\infty(\Omega_d)$, when the functions $\psi_0$ and $w$ belong to $\boL^\infty(\R)$, respectively in $\boL^\infty(\Omega_d)$. Let us next write
\begin{align*}
f_0 \big( \tilde{\psi}_0, \tilde{w} \big) & - f_0 \big( \psi_0, w \big) \\ & = \frac{1}{d} \int_0^d
\Big( 2 \, \langle \tilde{\psi}_0, \tilde{w} \rangle_\C \, (\tilde{w} - w)+2 w \, \langle \tilde{\psi}_0 - \psi_0, {w} \rangle_\C\, + 2 w \, \langle \tilde{\psi}_0, \tilde{w} - w \rangle_\C \\
& + |w|^2 (\tilde{\psi}_0 - \psi_0) + |w|^2 (\tilde{w} - w) + (\tilde{\psi}_0 + \tilde{w}) (|\tilde{w}|^2 - |w|^2) \Big) \, dy.
\end{align*}
When we take the $L^\infty_{2\sigma}$-norm of this expression, the terms in $\tilde{\psi}_0 - \psi_0$ are less than
$$
3 \big\| \psi_0 - \tilde{\psi}_0 \big\|_{L^\infty(\R)} \, \big\| w^2 e^{2\sigma |\cdot|} \big\|_{L^\infty(\Omega_d)}
\leq 3\big\| \psi_0 - \tilde{\psi}_0 \big\|_{L^\infty(\R)} \,\big\| w \big\|_{L_\sigma^\infty(\Omega_d)}^2,
$$
and the terms in $\tilde{w}- w$ are less than
\begin{align*}
\big\| (\tilde{w} - w) & e^{\sigma |\cdot|} \big\|_{L^\infty(\Omega_d)} \big(
2 \big\| \tilde{\psi}_0 \big\|_{L^\infty(\R)} \big\| \tilde{w}e^{\sigma |\cdot|} \big\|_{L^\infty(\Omega_d)}
+2 \big\| \tilde{\psi}_0 \big\|_{L^\infty(\R)} \big\| {w}e^{\sigma |\cdot|} \big\|_{L^\infty(\Omega_d)} \\
& +\big\| w^2 e^{\sigma |\cdot|} \big\|_{L^\infty(\Omega_d)} + \big(\big\| \tilde{\psi}_0 \big\|_{L^\infty(\R)}+
\big\| \tilde{w} \big\|_{L^\infty(\Omega_d))}\big)\big(\big\| {w}e^{\sigma |\cdot|} \big\|_{L^\infty(\Omega_d)}+\big\| \tilde{w}e^{\sigma |\cdot|} \big\|_{L^\infty(\Omega_d)}\big)\big).
\end{align*}
Since $\| \tilde{w} \big\|_{L^\infty(\Omega_d)} \leq \| \tilde{w} \big\|_{L_\sigma^\infty(\Omega_d)}$, the previous inequality guarantees that the function $f_0(\tilde{\psi}_0, \tilde{w}) - f_0(\psi_0, w)$ is in $L_{2 \sigma}^\infty(\R)$ and satisfies the estimate in~\eqref{eq:lips-f-0-g}. The same holds for the function $g(\tilde{\psi}_0, \tilde{w}) - g(\psi_0, w)$ given the definition of $g$. \qed

%%%%%%%%%%%%%%%%%%%%%%%%%%%%%%%%%%%
%%%%%%%%%%%%%%%%%%%%%%%%%%%%%%%%%%%
\subsection{Proof of Proposition~\ref{prop:exist-proj-sol}}
%%%%%%%%%%%%%%%%%%%%%%%%%%%%%%%%%%%
%%%%%%%%%%%%%%%%%%%%%%%%%%%%%%%%%%%

Recall first that the set $\boY_\sigma^\infty(\R)$ is defined as the intersection of the set $\boX_0(\R)$ with the vector space $W_{0, \sigma}^{1, \infty}(\R)$. Due to the uniform exponential decay of their derivative, any function $\psi$ in $\boY_\sigma^\infty(\R)$ has limits at $\pm \infty$, and since $\psi$ is moreover in $\boX_0(\R)$, these limits are of modulus one, that is of the form $e^{i \theta}$ for some number $\theta \in \R$. We now rely on this property in order to handle the proof of Proposition~\ref{prop:exist-proj-sol}.

Given any number $\theta \in (- \pi/4, \pi/4)$, we denote by $S_\theta : \R \to \C$ a smooth map belonging to $\boX_0(\R)$, depending smoothly on $\theta$, and such that
$$
S_\theta(x) = e^{i \, \theta \, \text{sign}(x)} \, S_0(x),
$$
for any $|x| > 1$. Note in particular that we can assume that the map $\theta \mapsto 1 - |S_\theta|^2$ is smooth from $(- \pi/4, \pi/4)$ to $L^2(\R)$. We then consider the maps
$$
(\theta, \varepsilon) \mapsto A(\theta, \varepsilon) = S_\theta + \varepsilon,
$$
$$
\psi \mapsto B_1(\psi) = \psi'' + \psi (1 - |\psi|^2), \quad (\psi, w) \mapsto B_2(\psi,w) = \pi_k \big( T(\psi, w) \big),
$$
and
$$
(\psi, w) \mapsto C(\psi, w) = \Big( f_0(\psi, w), \pi_k \big( g(\psi, w) \big) \Big).
$$

Assuming that $0 < \sigma \leq \sigma_0/2 < \sqrt{2}/2$, we check from the previous definitions that the map $B_1 \circ A$ is smooth from $(- \pi/4, \pi/4) \times (\boL_\sigma^\infty(\R) \cap \boW_{0, \sigma}^{2, \infty}(\R))$ to $\boL_\sigma^\infty(\R)$, while the map $B_2 \circ A$ is smooth from $(- \pi/4, \pi/4) \times (\boL_\sigma^\infty(\R) \cap \boW_{0, \sigma}^{2, \infty}(\R)) \times (\boW_\sigma^{2,\infty}(\Omega_d) \cap H_k)$ to $\boL_\sigma^\infty(\Omega_d) \cap H_k$. Here as in the sequel, the composition with the map $A$ only acts on the variable $\psi$. Note that in order to prove the previous claims, we use the property that any function $w \in W_{0, \sigma}^{2,\infty}(\Omega_d)$ is actually in $L_\sigma^\infty(\Omega_d)$ due to the Poincar\'e-Wirtinger inequality with exponentially weights in the $x$ variable.

From Lemma~\ref{lem:non-linearity}, we similarly check that the map $\boC = C \circ A$ is smooth from $(- \pi/4, \pi/4) \times \boL_\sigma^\infty(\R) \times \boL_\sigma^\infty(\Omega_d)$ to $\boL_{2 \sigma}^\infty(\R) \times (\boL_{2 \sigma}^\infty(\Omega_d) \cap H_k)$. Moreover, still from Lemma~\ref{lem:non-linearity}, by restricting $\boC$ to a small enough neighbourhood of the origin, we can make its Lipschitz norm in this neighbourhood as small as we wish.

Now, we infer from Lemmas~\ref{lem:exp-dec-eta-0} and~\ref{lem:control-V-N} that $B_1$ has an inverse from a neighbourhood of $0$ in $\boL_\sigma^\infty(\R)$ to a neighbourhood of $S_0$ in $\boY_\sigma^\infty(\R)$. When $|d - d_k| < \delta_1$ and $0 < \sigma \leq \sigma_0/2 $, it follows from Lemma~\ref{lem:invert-T-k} that if $\psi$ is close enough to $S_0$ in $\boY_\sigma^\infty(\R)$, then $B_2(\psi, \cdot)$ has an inverse from a neighbourhood of $0$ in $\boL_{2 \sigma}^\infty(\Omega_d) \cap H_k$ to a neighbourhood of $0$ in $\boL_\sigma^\infty(\Omega_d) \cap H_k$. Moreover, by Lemmas~\ref{lem:control-exp-V-N} and~\ref{lem:invert-T-k}, both inverses are Lipschitz. Therefore, admitting for a moment that the map $A$ has a Lipschitz inverse from a neighbourhood of $S_0$ in $\boY_\sigma^\infty(\R)$ to a neighbourhood of $(0, 0)$ in $(- \pi/4, \pi/4) \times \boL_\sigma^\infty(\R)$, we deduce that the map
$$
\boB(\theta, \varepsilon, w) = \big( B_1(S_\theta + \varepsilon), B_2(S_\theta + \varepsilon, w) \big),
$$
has a Lipschitz inverse from a neighbourhood of $0$ in $\boL_\sigma^\infty(\R) \times (\boL_{2 \sigma}^\infty(\Omega_d) \cap H_k)$ to a neighbourhood of $(0, 0, 0)$ in $(- \pi/4, \pi/4) \times \boL_\sigma^\infty(\R) \times (\boL_\sigma^\infty(\Omega_d) \cap H_k)$. Moreover, since $\psi = S_\theta + \varepsilon$ satisfies~\eqref{eq:psi-0} and $w$ satisfies~\eqref{eq:proj-inv-L1}, this inverse is in fact Lipschitz with values into $(- \pi/4, \pi/4) \times (L_\sigma^\infty(\R) \cap W_\sigma^{2,\infty}(\R)) \times W_{0, \sigma}^{2, \infty}(\Omega_d)$.

We may then conclude that $\boB$, since it is smooth and has a Lipschitz inverse, has a smooth inverse defined on a neighbourhood of the origin in $\boL_\sigma^\infty(\R) \times (\boL_{2 \sigma}^\infty(\Omega_d) \cap H_k)$ to a neighbourhood of $(0, 0, 0)$ in $(- \pi/4, \pi/4) \times (\boL_\sigma^\infty(\R) \cap \boW_\sigma^{2, \infty}(\R)) \times (\boW_{0, \sigma}^{2,\infty}(\Omega_d) \cap H_k)$, and then to a neighbourhood of $(- \pi/4, \pi/4) \times \boL_\sigma^\infty(\R) \times (\boL_\sigma^\infty(\Omega_d) \cap H_k)$ after embedding the spaces $W_{0, \sigma}^{2, \infty}(\Omega_d)$ into $L_\sigma^\infty(\Omega_d)$.

To justify that $A$ has a Lipschitz inverse from a neighbourhood of $S_0$ in $\boY_\sigma^\infty(\R)$ to a neighbourhood of $(0, 0)$ in $(- \pi/4, \pi/4) \times \boL_\sigma^\infty(\R)$, we note that, if $(\psi_1,\psi_2) \in \boY_\sigma^\infty(\R)^2$ then, since $\psi_1(0) = \psi_2(0) = 0$, their limits as $x \to + \infty$ are
$$
e^{i \theta_1} = \int_0^{+ \infty} \psi_1'(x) \, dx, \quad \text{ and } \quad e^{i \theta_2} = \int_0^{+\infty}\psi_2'(x) \, dx,
$$
so that
$$
\big| \theta_1 - \theta_2 \big| \leq \frac{C}{\sigma} \big\| \psi_1' - \psi_2' \big\|_{L_\sigma^\infty},
$$
by bounding $|\psi_1'(x) - \psi_2'(x)|$ by $e^{- \sigma |x|} \, \| \psi_1' - \psi_2' \|_{L_\sigma^\infty}$. As a consequence, we can bound from above $\| \varepsilon_1 - \varepsilon_2 \|_{L_\sigma^\infty}$, where we have set $\varepsilon_i =\psi_i - S_{\theta_i}$.

It remains to apply the Picard fixed point theorem with parameter. For any small enough $\lambda \in \R$, we define
\begin{equation}
\label{def:Xi-lambda}
\Xi_\lambda \big( \theta, \varepsilon, w \big) := \boB^{-1} \circ \boC \big( \theta, \varepsilon, w + \lambda \chi_k \big).
\end{equation}
This map is well-defined and smooth from a neighbourhood of the origin in $(- \pi/4, \pi/4) \times \boL_\sigma^\infty(\R) \times (\boL_\sigma^\infty(\Omega_d) \cap H_k)$ to itself. We have used here the fact that, speaking loosely, $\boC$ maps $L_\sigma^\infty$ to $L_{2 \sigma}^\infty$. Moreover, by restricting the neighbourhood if necessary we may make the Lipschitz constant of $\Xi_\lambda$ as small as we wish, so that it has a unique fixed point $(\theta_\lambda,\varepsilon_\lambda, W_\lambda)$ for any small enough value of $\lambda$. Moreover, since $\Xi_\lambda$ depends also smoothly on $\lambda$, this fixed point is a smooth function of $\lambda$ with values into $(- \pi/4, \pi/4) \times (\boL_\sigma^\infty(\R) \cap \boW_\sigma^{2, \infty}(\R)) \times (\boW_{0, \sigma}^{2,\infty}(\Omega_d) \cap H_k)$.

The functions $\Psi_0^\lambda = S_{\theta_\lambda} +\varepsilon_\lambda$ and $W_\lambda$ are then the unique solutions to~\eqref{eq:proj-syst-0} and~\eqref{eq:proj-syst-1} in a small neighbourhood of $(S_0, 0)$ in $\boY_\sigma^\infty(\R) \times (\boL_\sigma^\infty(\Omega_d) \cap H_k)$, and they depend smoothly on $\lambda$ with values into $W_{0, \sigma}^{2, \infty}(\R) \times W_{0, \sigma}^{2, \infty}(\Omega_d)$. Note also that, due to the facts that the map $\theta \mapsto 1 - |S_\theta|^2$ is smooth from $(- \pi/4, \pi/4)$ to $L^2(\R)$, and the functions $\lambda \mapsto \theta_\lambda$ and $\lambda \mapsto \varepsilon_\lambda$ are smooth with values into $(- \pi/4, \pi/4)$, respectively $\boL_\sigma^\infty(\R)$, the function $\lambda \mapsto 1 - |\Psi_0^\lambda|^2$ is smooth with values into $L^2(\R)$. This concludes the proof of Proposition~\ref{prop:exist-proj-sol}. \qed

%%%%%%%%%%%%%%%%%%%%%%
%%%%%%%%%%%%%%%%%%%%%%
%%%%%%%%%%%%%%%%%%%%%%
\section{Differentiability properties}
\label{sec:diff-I}
%%%%%%%%%%%%%%%%%%%%%%
%%%%%%%%%%%%%%%%%%%%%%
%%%%%%%%%%%%%%%%%%%%%%

In this section, we rely on the smoothness of the maps $\lambda \mapsto (\Psi_0^\lambda, W^\lambda)$ and $\lambda \mapsto 1 - |\Psi_0^\lambda|^2$ to describe the derivatives of the function $J$ as in Lemma~\ref{lem:diff-I}, and also to expand the energy $E(\Psi_{k, d})$ as in~\eqref{eq:DL-E}.

%%%%%%%%%%%%%%%%%%%%%%%%%%%
%%%%%%%%%%%%%%%%%%%%%%%%%%%
\subsection{Proof of Lemma~\ref{lem:diff-I}}
%%%%%%%%%%%%%%%%%%%%%%%%%%%
%%%%%%%%%%%%%%%%%%%%%%%%%%%

The smoothness of the function $J$ follows from the smoothness of the maps $(d, \lambda) \mapsto (\psi_0^\lambda, W^\lambda)$. Going back to the proof of Proposition~\ref{prop:exist-proj-sol}, we observe that the map $\Xi_\lambda$ in~\eqref{def:Xi-lambda} also depends on the width $d$ through the fact that the functions $w$ and $\chi_k$ are defined on the strip $\Omega_d$. Rescaling distances by $d$, that is applying the change of variables $z = d_k \, y/d$, we can impose these functions to be defined on a unique domain $\Omega_{d_k}$. Since this change of variables is smooth, we check that the map $\Xi_\lambda$ depends smoothly on the pair $(d, \lambda)$. Applying the Picard fixed point theorem with parameter as in the proof of Proposition~\ref{prop:exist-proj-sol} then guarantees that we can find two numbers $0 < \delta_2 \leq \delta_1$ and $0 < \rho_2 \leq \rho_1$ such that the map $(d, \lambda) \mapsto (\psi_0^\lambda, \tilde{W}_\lambda)$ is smooth from $(d_k - \delta_2, d_k + \delta_2) \times (- \rho_2, \rho_2)$ to $W_{0, \sigma}^{2, \infty}(\R) \times W_{0, \sigma}^{2, \infty}(\Omega_{d_k})$. Here as in the sequel, we have used the tilde symbol to highlight the fact that the function $\tilde{W}^\lambda$ now depends on $(x, z) \in \R \times \Omega_{d_k}$. It is then enough to apply the smooth reverse change of variables $y = d \, z/d_k$ to deduce from the definition of the function $J$ that it is smooth on $(d_k - \delta_2, d_k + \delta_2) \times (- \rho_2, \rho_2)$.

The fact that the function $J$ is odd in its second variable $\lambda$ relies on the property that the conjugate function $\psi$ of a solution to~\eqref{eq:GL} remains a solution to~\eqref{eq:GL}. Due to the property that the function $\chi_k$ takes purely imaginary values, the conjugated pair $(\overline{\psi_0^\lambda}, \overline{W^\lambda})$ is a solution to~\eqref{eq:proj-syst-0} and~\eqref{eq:proj-syst-1} for $\lambda$ replaced by $- \lambda$. Using the uniqueness of the solutions of these equations provided by Proposition~\ref{prop:exist-proj-sol}, we deduce that
$$
\psi_0^{(- \lambda)} = \overline{\psi_0^\lambda}, \quad \text{ and } \quad W^{(- \lambda)} = \overline{W^\lambda},
$$
for any $- \rho_1 < \lambda < \rho_1$. Introducing this identity into the definition of the function $J$, invoking the definition of the function $g$, and again the fact that the function $\chi_k$ takes purely imaginary values, we obtain that $J$ is indeed odd in the variable $\lambda$.

We next compute the values of the functions $J$ and its first order derivatives for $(d, \lambda) = (d_k, 0)$. We first observe that for $\lambda = 0$, the pair $(S_0, 0)$ is a solution to~\eqref{eq:proj-syst-0} and~\eqref{eq:proj-syst-1}. Hence by uniqueness it is equal to the pair $(\Psi_0^0, W^0)$ for any $d_k - \delta_2 < d < d_k + \delta_2$. This is sufficient to check that
$$
J(d, 0) = 0,
$$
so that
$$
\partial_d^\ell J(d_k, 0) = 0,
$$
for any $\ell \geq 1$.

When $\lambda \neq 0$, we call $Z_k^\lambda := W^\lambda + \lambda \chi_k$, and we recall that
$$
J(d, \lambda) := \big\langle - \Delta Z_k^\lambda - Z_k^\lambda \big( 1 - |\Psi_0^\lambda|^2 \big) + 2 \, \langle \Psi_0^\lambda, Z_k^\lambda \rangle_\C \, \Psi_0^\lambda - g\big( \Psi_0^\lambda, Z_k^\lambda \big), \chi_k \big\rangle_{L^2(\Omega_d)},
$$
where $g$ is defined by~\eqref{def:g}. Since the map $\lambda \mapsto (\psi_0^\lambda, W^\lambda)$ is smooth with values into $W_{0, \sigma}^{2, \infty}(\R) \times W_{0, \sigma}^{2, \infty}(\Omega_{d_k})$ by Proposition~\ref{prop:exist-proj-sol}, we are allowed to compute the derivative with respect to $\lambda$
\begin{equation}
\label{def:dIlam}
\begin{split}
\partial_\lambda J(d, \lambda) := \big\langle & - \Delta \big( \partial_\lambda Z_k^\lambda \big) - \partial_\lambda Z_k^\lambda \big( 1 - |\Psi_0^\lambda|^2 \big)+ 2 \, \langle \Psi_0^\lambda, \partial_\lambda Z_k^\lambda \rangle_\C \, \Psi_0^\lambda \\
& + 2 \, \langle \partial_\lambda \Psi_0^\lambda, Z_k^\lambda \rangle_\C \, \Psi_0^\lambda+ 2 \, \langle \Psi_0^\lambda, Z_k^\lambda \rangle_\C \, \partial_\lambda \Psi_0^\lambda
+2 \, \langle \partial_\lambda \Psi_0^\lambda, \Psi_0^\lambda \rangle_\C \, Z_k^\lambda \\& - D_1 g\big( \Psi_0^\lambda, Z_k^\lambda)( \partial_\lambda \Psi_0^\lambda) - D_2 g\big( \Psi_0^\lambda, Z_k^\lambda \big)( \partial_\lambda Z_k^\lambda), \chi_k \big\rangle_{L^2(\Omega_d)}.
\end{split}
\end{equation}
When we want to evaluate this quantity at $(d_k,0)$, we use the previous computed special values
$$
\Psi_0^* := {\Psi_0^\lambda}_{|(d_k,0)} = S_0, \quad \text{ and } \quad Z_k^* := {Z_k^\lambda}_{|(d_k,0)}= (W^\lambda + \lambda \chi_k)_{|(d_k, 0)} = 0.
$$
Given the expression of the function $g$, we check that $D_1 g(S_0, 0) = D_2 g(S_0, 0) = 0$. Calling $\goL_0$ the linearized operator at $S_0$ given by
$$
\goL_0(\psi) := - \Delta \psi - \psi (1 - S_0^2) + 2 \langle S_0, \psi \rangle_\C \, S_0,
$$
this implies that
$$
\partial_\lambda J(d_k, 0) = \big\langle \goL_0 \big( {\partial _\lambda Z_k^\lambda}_{|(d_k, 0)} \big), \chi_k \big\rangle_{L^2(\Omega_d)}.
$$
This quantity is equal to zero because $\chi_k$ is in the kernel of $\goL_0$. Since $J(d, \lambda)$ is odd in the variable $\lambda$, we point out that the second derivative $\partial_{\lambda, \lambda} J(d_k, 0)$ is also equal to zero.

The next step is to compute the derivative $\partial^2_{d, \lambda} J(d_k, 0)$ so we have to differentiate ~\eqref{def:dIlam} with respect to $d$. For this purpose, we use the change of variables $y = d z/ d_k$ in order to drop the dependence on $d$ of the domain $\Omega_d$. We write down explicitly this dependence through the identity $\tilde{W}^\lambda(x, z) = W^\lambda(x, d z/d_k)$, where the function $\tilde{W}^\lambda$ is now defined in $\Omega_{d_k}$. We similarly set
$$
\tilde{\chi}_k(x, z) := i \chi_0(x) \cos \Big( \frac {k \pi z}{d_k} \Big),
$$
so that the function $\tilde{\chi}_k$ no longer contains dependence on the parameter $d$. Calling $\tilde{Z}_k^\lambda= \tilde{W}^\lambda + \lambda \tilde{\chi}_k$, we obtain
\begin{equation}
\label{def:dIlamd}
\begin{split}
\partial_\lambda J(d, \lambda) := \frac{d}{d_k} \big\langle & - \partial_{x, x} \big( \partial_\lambda \tilde{W}^\lambda + \tilde{\chi}_k \big) -\frac {d_k^2}{d^2} \partial_{z, z} \big( \partial_\lambda \tilde{W}^\lambda + \tilde{\chi}_k \big) - \partial_\lambda \tilde{Z}_k^\lambda \big( 1 - |\Psi_0^\lambda|^2 \big) \\
& + 2 \, \langle \Psi_0^\lambda, \partial_\lambda \tilde{Z}_k^\lambda \rangle_\C \, \Psi_0^\lambda + 2 \, \langle \partial_\lambda \Psi_0^\lambda, \tilde{Z}_k^\lambda \rangle_\C \, \Psi_0^\lambda + 2 \, \langle \Psi_0^\lambda, \tilde{Z}_k^\lambda \rangle_\C \, \partial_\lambda \Psi_0^\lambda \\
& + 2 \, \langle \partial_\lambda \Psi_0^\lambda, \Psi_0^\lambda \rangle_\C \, \tilde{Z}_k^\lambda - D_1 g \big( \Psi_0^\lambda, \tilde{Z}_k^\lambda \big) (\partial_\lambda \Psi_0^\lambda) \\
& - D_2 g \big( \Psi_0^\lambda, \tilde{Z}_k^\lambda \big) (\partial_\lambda \tilde{Z}_k^\lambda), \tilde{\chi}_k \big\rangle_{L^2(\Omega_{d_k})}.
\end{split}
\end{equation}
Before differentiating further with respect to $d$, we are going to prove that at $(d_k,0)$, the first order derivatives of the smooth function $(\lambda, d) \mapsto (\psi_0^\lambda, W^\lambda)$ are given by
\begin{equation}
\label{ddlambdapsi}
\partial_\lambda \Psi_0^* := \partial_\lambda {\Psi_0^\lambda}_{|(d_k, 0)} = 0, \quad \partial_d \Psi_0^* := \partial_d {\Psi_0^\lambda}_{|(d_k, 0)} = 0,
\end{equation}
\begin{equation}
\label{ddlambdaW}
\partial_\lambda \tilde{W}^* := {\partial_\lambda \tilde{W}^\lambda}_{|(d_k, 0)} = 0, \quad \partial_d \tilde{W}^* := {\partial_d \tilde{W}^\lambda}_{|(d_k, 0)} = 0.
\end{equation}
For this purpose, we need to write the equations satisfied by these functions. Let us differentiate~\eqref{eq:proj-syst-0} with respect to $\lambda$ and evaluate it at $(d_k, 0)$, then we find
$$
- \big( \partial_\lambda {\Psi_0^*} \big)'' - \partial_\lambda {\Psi_0^*} \big( 1 - |S_0|^2 \big) + 2 \, \langle S_0, \partial_\lambda {\Psi_0^*}\rangle_\C \, S_0 = 0.
$$
Since $\Psi_0^*$ does not depend on the variable $z$, this means that $L_0(\partial_\lambda {\Psi_0^*}) = 0$, so that there are real numbers $\alpha$ and $\beta$ such that
$$
\partial_\lambda {\Psi_0^*} = \alpha S_0' + i \beta S_0.
$$
Since the real part of $\Psi_0^\lambda$ is odd and its imaginary part is even, it is the same with $\partial_\lambda {\Psi_0^\lambda}$. Since $S_0$ is odd and $S_0'$ is even, then $\alpha = \beta = 0$ and $\partial_\lambda {\Psi_0^*} = 0$. The same reasoning, differentiating with respect to $d$, leads to $\partial_d {\Psi_0^*} = 0$, so that~\eqref{ddlambdapsi} does hold.

For the function $\tilde{W}^\lambda$, we go back to~\eqref{eq:proj-syst-1}, implement the change of variables $y = d z/d_k$, and differentiate the resulting equation with respect to $d$ or $\lambda$. Since the rescaled orthogonal projection $\tilde{\pi}_k$ given by~\eqref{def:pi-k} for $d = d_k$ is independent of $d$ or $\lambda$, we can compute the derivative with respect to $\lambda$ at $(d_k, 0)$ and find
$$
\tilde{\pi}_k \Big( - \Delta \big( \partial_\lambda \tilde{W}^* + \tilde{\chi}_k \big) - \big( \partial_\lambda \tilde{W}^* + \tilde{\chi}_k \big) \big( 1 - |S_0|^2 \big) + 2 \, \langle S_0, \partial_\lambda \tilde{W}^* + \tilde{\chi}_k \rangle_\C \, S_0 \Big) = 0.
$$
Since $\goL_0(\tilde{\chi}_k) = 0$, and the operators $\goL_0$ and $\tilde{\pi}_k$ commute, we deduce that the function $\tilde{\pi}_k(\partial_\lambda \tilde{W}^*)$ is in the kernel of $\goL_0$. Moreover, the image of the projection $\tilde{\pi}_k$ is included in the orthogonal of the kernel, so this implies
$$
\tilde{\pi}_k \big( \partial_\lambda \tilde{W}^* \big) = 0.
$$
Since $\tilde{\pi}_k(\tilde{W}^\lambda) = \tilde{W}^\lambda$, we have $\tilde{\pi}_k(\partial_\lambda \tilde{W}^*) = \partial_\lambda \tilde{W}^*$ and therefore $\partial_\lambda \tilde{W}^* = 0$. The same reasoning holds when differentiating with respect to $d$ to get~\eqref{ddlambdaW}.

Differentiating~\eqref{def:dIlamd} with respect to $d$, and using the fact that $\tilde{\Psi}_0^* = S_0$ and $\tilde{W}^* = 0$, we next find at $(d_k, 0)$,
\begin{equation}
\begin{split}
\label{dlamIlam}
\partial_{d, \lambda} J(d_k, 0) = & \frac 1{d_k} \partial_\lambda J(d_k, 0) + \frac {2}{d_k} \big\langle \partial_{z, z} (\partial_\lambda \tilde{W}^* + \tilde{\chi}_k), \tilde{\chi}_k \big\rangle_{L^2(\Omega_{d_k})} + \big\langle \goL_0 (\partial_{d, \lambda} \tilde{W}^*), \tilde{\chi}_k \big\rangle_{L^2(\Omega_{d_k})} \\
& + 2 \big\langle \langle S_0, \partial_d \Psi_0^* \rangle_\C \, (\partial_\lambda \tilde{W}^* + \tilde{\chi}_k) + \langle \partial_d \Psi_0^*, \partial_\lambda \tilde{W}^* + \tilde{\chi}_k \rangle_\C \, S_0 + \langle S_0, \partial_d \tilde{W}^* \rangle_\C \, \partial_\lambda \Psi_0^* \\
& + \langle S_0, \partial_\lambda \tilde{W}^* + \tilde{\chi}_k \rangle_\C \, \partial_d \Psi_0^* + \langle \partial_\lambda \Psi_0^*, S_0 \rangle_\C \, \partial_d \tilde{W}^* + \langle \partial_\lambda \Psi_0^*, \partial_d \tilde{W}^* \rangle_\C \, S_0, \tilde{\chi}_k \big\rangle_{L^2(\Omega_{d_k})} \\
& - \big\langle D_1 \tilde{g}\big( S_0, 0 \big)(\partial_{d, \lambda} \Psi_0^*) + D_2 \tilde{g} \big( S_0, 0 \big)(\partial_{d, \lambda} \tilde{W}^*) + D_{1, 1} \tilde{g} \big( S_0, 0 \big)(\partial_\lambda \Psi_0^*, \partial_d \Psi_0^*) \\
& + D_{1, 2} \tilde{g} \big( S_0, 0 \big)(\partial_\lambda \Psi_0^*, \partial_d \tilde{W}^*) + D_{1, 2} \tilde{g} \big( S_0, 0 \big)(\partial_d \Psi_0^*, \partial_\lambda \tilde{W}^* + \tilde{\chi}_k) \\
& + D_{2, 2} \tilde{g}\big( S_0, 0 \big) (\partial_d \tilde{W}_\lambda^*, \partial_\lambda \tilde{W}^* + \tilde{\chi_k}), \tilde{\chi}_k \big\rangle_{L^2(\Omega_{d_k})}.
\end{split}
\end{equation}
In this formula, given two functions $\psi_0 : \R \to \C$ and $\tilde{w} : \Omega_{d_k} \to \C$, we have set
$$
\tilde{g} \big( \psi_0, \tilde{w} \big) = - 2 \big\langle \psi_0, \tilde{w} \big\rangle_\C \tilde{w} - \big| \tilde{w} \big|^2 \big( \psi_0 + \tilde{w} \big) + \tilde{f}_0 \big( \psi_0, \tilde{w} \big),
$$
where
$$
\tilde{f}_0\big( \psi_0, \tilde{w} \big)(x) := \frac{1}{d_k} \int_0^{d_k} \Big( 2 \big\langle \psi_0(x), \tilde{w}(x, z) \big\rangle_\C \tilde{w}(x, z) + \big| \tilde{w}(x, z) \big|^2 \big( \psi_0(x) + \tilde{w}(x, z) \big) \Big) \, dz.
$$
When $(d, \lambda) = (d_k, 0)$, the first term in~\eqref{dlamIlam} is equal to zero by the previous computation, the third term is also zero because $\tilde{\chi}_k$ is in the kernel of the self-adjoint operator $\goL_0$. Moreover, we can check that
$$
D_1 \tilde{g}(S_0, 0) = D_2 \tilde{g}(S_0, 0) = 0, \quad \text{ and } \quad D_{1, 1} \tilde{g}(S_0, 0) = D_{1, 2} \tilde{g}(S_0, 0) = 0,
$$
whereas
\begin{equation}
\label{eq:D22-g}
\begin{split}
D_{2, 2} \tilde{g} \big( S_0, 0 \big)(H, \tilde{L}) = & - 2 \Big( \big\langle S_0, H \big\rangle_\C \tilde{L} + \big\langle S_0, \tilde{L} \big\rangle_\C H + \big\langle H, \tilde{L} \big\rangle_\C S_0 \Big) \\
& + \frac{2}{d_k} \int_0^{d_k} \Big[ \big\langle S_0, H \big\rangle_\C \tilde{L} + \big\langle S_0, \tilde{L} \big\rangle_\C H + \big\langle H, \tilde{L} \big\rangle_\C S_0 \Big](\cdot, z) \, dz.
\end{split}
\end{equation}
Using the values at $(d_k, 0)$ given by~\eqref{ddlambdapsi} and~\eqref{ddlambdaW}, we find
$$
\partial_{d, \lambda} J(d_k, 0) = - \frac {2}{d_k} \int_{\Omega_{d, k}} \big| \partial_z \tilde{\chi}_k \big|^2 = - \frac{2 k^2 \pi^2}{d_k^2} \int_\R \chi_0^2 = - 2 \sqrt{2},
$$
which implies the first equality in~\eqref{eq:der-neq-0}.

We finally compute the third order derivative $\partial_{\lambda, \lambda, \lambda} J(d_k, 0)$. In this direction, we first derive from~\eqref{def:dIlam} that
\begin{align*}
& \partial_{\lambda, \lambda} J(d, \lambda) = \big\langle - \Delta (\partial_{\lambda, \lambda} W^\lambda) - \partial_{\lambda, \lambda} W^\lambda (1 - |\psi_0^\lambda|^2) + 2 \langle \Psi_0^\lambda, \partial_{\lambda, \lambda} W^\lambda \rangle_\C \Psi_0^\lambda + 4 \langle \partial_\lambda \Psi_0^\lambda, \Psi_0^\lambda \rangle_\C \, (\partial_\lambda W_\lambda \\
& + \chi_k) + 4 \langle \partial_\lambda \Psi_0^\lambda, \partial_\lambda W^\lambda + \chi_k \rangle_\C \, \Psi_0^\lambda + 4 \langle \Psi_0^\lambda, \partial_\lambda W^\lambda + \chi_k \rangle_\C \, \partial_\lambda \Psi_0^\lambda + 2 \langle \partial_{\lambda, \lambda} \Psi_0^\lambda, \Psi_0^\lambda \rangle_\C \, (W^\lambda + \lambda \chi_k) \\
& + 2 \langle \partial_{\lambda, \lambda} \Psi_0^\lambda, W^\lambda + \lambda \chi_k \rangle_\C \, \Psi_0^\lambda + 4 \langle \partial_\lambda \Psi_0^\lambda, W^\lambda + \lambda \chi_k \rangle_\C \, \partial_\lambda \Psi_0^\lambda + 2 \langle \Psi_0^\lambda, W^\lambda + \lambda \chi_k \rangle_\C \, \partial_{\lambda, \lambda} \Psi_0^\lambda \\
& + 2 |\partial_\lambda \Psi_0^\lambda|^2 \, (W^\lambda + \lambda \chi_k) - D_1 g\big( \Psi_0^\lambda, W^\lambda + \lambda \chi_k \big)(\partial_{\lambda, \lambda} \Psi_0^\lambda) - D_2 g \big( \Psi_0^\lambda, W^\lambda + \lambda \chi_k \big)(\partial_{\lambda, \lambda} W^\lambda) \\
& - D_{1, 1} g \big( \Psi_0^\lambda, W^\lambda + \lambda \chi_k \big)(\partial_\lambda \Psi_0^\lambda, \partial_\lambda \Psi_0^\lambda) - 2 D_{1, 2} g \big( \Psi_0^\lambda, W^\lambda + \lambda \chi_k \big)(\partial_\lambda \Psi_0^\lambda, \partial_\lambda W^\lambda + \chi_k) \\
& - D_{2, 2} g\big( \Psi_0^\lambda, W^\lambda + \lambda \chi_k \big) (\partial_\lambda W^\lambda + \chi_k, \partial_\lambda W^\lambda + \chi_k), \chi_k\big\rangle_{L^2(\Omega_d)}.
\end{align*}
Differentiating again this quantity with respect to $\lambda$, we find at $(d_k, 0)$
\begin{equation}
\label{iddd}
\begin{split}
\partial_{\lambda, \lambda,\lambda} J(d_k, 0) = & \big\langle \goL_0 (\partial_{\lambda, \lambda, \lambda} W^*) + 6 \langle \partial_{\lambda, \lambda} \Psi_0^*, S_0 \rangle_\C \, \chi_k + 6 \langle \partial_{\lambda, \lambda} \Psi_0^*, \chi_k \rangle_\C \, S_0 + 6 \langle \chi_k, S_0 \rangle_\C \, \partial_{\lambda, \lambda} \Psi_0^* \\
& - 3 D_{2, 2} g \big( S_0, 0 \big)(\chi_k,\partial_{\lambda,\lambda} W^* ) - 2 D_{2, 2, 2} g \big( S_0, 0 \big)(\chi_k, \chi_k, \chi_k), \chi_k \big\rangle_{L^2(\Omega_{d_k})},
\end{split}
\end{equation}
where we have set as before
$$
\partial_{\lambda, \lambda} \Psi_0^* := \partial_{\lambda, \lambda} {\Psi_0^\lambda}_{|(d_k, 0)}, \quad \partial_{\lambda, \lambda} W^* := {\partial_{\lambda, \lambda} W^\lambda}_{|(d_k, 0)}, \quad \text{ and } \quad \partial_{\lambda, \lambda,\lambda} W^* := {\partial_{\lambda, \lambda, \lambda} W^\lambda}_{|(d_k, 0)}.
$$
In order to obtain the simplified formula in~\eqref{iddd}, we have used the fact that $\Psi_0^* = S_0$, $\partial_\lambda \Psi_0^* = 0$ and $W^* = \partial_\lambda W^* = 0$, as well as the identities $D_1 g(S_0, 0) = D_2 g(S_0, 0) = D_{1, 1} g(S_0, 0) = D_{1, 2} g(S_0, 0) = 0$. We next recall that $\goL_0 (\chi_k) = 0$, and we observe that $\langle S_0, \chi_k \rangle_\C = 0$ because $S_0$ takes real values, while $\chi_k$ takes purely imaginary ones. Using this property further, we check from (the rescaled version of)~\eqref{eq:D22-g} that
$$
\big\langle D_{2, 2} g \big( S_0, 0 \big)(\chi_k, \partial_{\lambda, \lambda} W^* ), \chi_k \big\rangle_{L^2(\Omega_{d_k})} = - 2 \int_{\Omega_{d_k}} \langle S_0, \partial_{\lambda, \lambda} W^* \rangle_\C \, |\chi_k|^2,
$$
while we can compute similarly that
$$
\big\langle D_{2, 2, 2} g \big( S_0, 0 \big) (\chi_k, \chi_k, \chi_k), \chi_k \big\rangle_{L^2(\Omega_{d_k})} = - 6 \int_{\Omega_{d_k}} |\chi_k|^4.
$$
Therefore, we find from~\eqref{iddd} that
\begin{equation}
\label{eq:iddd}
\partial_{\lambda, \lambda,\lambda} J(d_k, 0) = 6 \int_{\Omega_{d_k}} \Big( 2 |\chi_k|^4 + \langle S_0, \partial_{\lambda,\lambda} W^* \rangle_\C \, |\chi_k|^2 + \langle S_0, \partial_{\lambda, \lambda} \Psi_0^* \rangle_\C \, |\chi_k|^2 \Big).
\end{equation}
We finally compute the derivatives $\partial_{\lambda, \lambda} \Psi_0^*$ and $\partial_{\lambda, \lambda} W^*$ using the equations which they satisfy.

Concerning the derivative $U := \partial_{\lambda, \lambda} \Psi_0^*$, we use the formulae in~\eqref{ddlambdapsi} and~\eqref{ddlambdaW} to derive from~\eqref{eq:proj-syst-0} that this derivative satisfies
\begin{equation}
\label{eq:dll-Psi0*}
- U'' - U \big( 1 - S_0^2 \big) + 2 \big\langle S_0, U \big\rangle_\C S_0 = - 2 D_{2, 2} f_0 \big( S_0, 0 \big)(\chi_k, \chi_k) = - |\chi_0|^2 \, S_0,
\end{equation}
that is
\begin{equation}
\label{eqU}
- U''(x) - \frac{U(x)}{\cosh^2 \big( \frac{x}{\sqrt{2}} \big)} + 2 \text{Re} \big( U(x) \big) \tanh^2 \Big( \frac{x}{\sqrt{2}} \Big) = - \frac{\sinh \big( \frac{x}{\sqrt{2}} \big)}{\cosh^3 \big( \frac{x}{\sqrt{2}} \big)}.
\end{equation}
The imaginary part of $U$ is even and vanishes at the origin, while the solutions of the equation satisfied by this imaginary part are spanned by the functions $x \mapsto \tanh(x/\sqrt{2})$ and $x \mapsto x \tanh(x/\sqrt{2}) - \sqrt{2}$. As a consequence, the imaginary part of $U$ is identically equal to zero. On the other hand, the function
$$
U_0(x) = - \frac{x}{2 \sqrt{2} \cosh^2 \big( \frac{x}{\sqrt{2}} \big)},
$$
is a special solution of~\eqref{eqU}. The solutions corresponding to the homogeneous equation for the real part of $U$ are spanned by the functions
$$
U_1(x) = \frac{1}{\cosh^2 \big( \frac{x}{\sqrt{2}})}, \quad \text{ and } \quad U_2(x) = 6 \tanh \Big( \frac{x}{\sqrt{2}} \Big) + 4 \sinh \Big( \frac{x}{\sqrt{2}} \Big) \cosh \Big( \frac{x}{\sqrt{2}} \Big) + \frac{3 \sqrt{2} x}{\cosh^2 \big( \frac{x}{\sqrt{2}} \big)}.
$$
Since the solution which we are looking for is odd, vanishes at the origin and tends to $0$ at $\pm \infty$, we conclude that
$$
\partial_{\lambda, \lambda} \Psi_0^*(x) = U_0(x) = - \frac{x}{2 \sqrt{2} \cosh^2 \big( \frac{x}{\sqrt 2} \big)},
$$
and we then check that
$$
6 \int_\R S_0 \, \partial_{\lambda, \lambda} \Psi_0^* \, |\chi_0|^2 = - \frac{3}{4} \int_\R |\chi_0|^4.
$$
In particular, we are led to
\begin{equation}
\label{iddd2}
6 \int_{\Omega_{d_k}} \Big( 2 |\chi_k|^4 + \langle S_0, \partial_{\lambda, \lambda} \Psi_0^* \rangle_\C \, |\chi_k|^2 \Big) = 3 d_k \int_\R \Big( 3 |\chi_0|^4 + 2 S_0 \, \partial_{\lambda, \lambda} \Psi_0^* \, |\chi_0|^2 \Big) = \frac{33 \, d_k}{4} \int_\R |\chi_0|^4.
\end{equation}

We next turn to the derivative $V := \partial_{\lambda,\lambda} W^*$. Similarly, we use the formulae in~\eqref{ddlambdapsi} and~\eqref{ddlambdaW} to derive from~\eqref{eq:proj-syst-1} that this derivative satisfies
\begin{equation}
\label{eq:dll-W^*}
\pi_k \Big( - \Delta V(x, y) - V(x, y) \big( 1 - |S_0(x)|^2 \big) + 2 \, \langle S_0(x), V(x, y) \rangle_\C \, S_0(x) + \chi_0(x)^2 \, S_0(x) \, \cos \Big( \frac{2 \pi k y}{d_k} \Big) \Big) = 0.
\end{equation}
In view of Proposition~\ref{prop:exist-proj-sol}, the derivative $V$ is in $\boH^2(\Omega_{d_k}, \C) \cap H_k$, while the map $(x, y) \mapsto \chi_0(x)^2 S_0(x) \cos(2 \pi k y/d_k)$ is in $\boL^2(\Omega_{d_k}, \C)$. Decomposing the operator $T$ (for $\psi_0 = S_0$) as in~\eqref{eq:dec-boT}, and invoking the invertibility properties of the operators $T_k$ resulting from Lemma~\ref{lem:invert-T-k}, we deduce that the derivative $V$ necessarily writes as
\begin{equation}
\label{eqdllw}
\partial_{\lambda,\lambda} W^*(x, y) = v(x) \, \cos \Big( \frac{2 \pi k y}{d_k} \Big),
\end{equation}
where the real-valued function $v$ is the unique solution in $H^2(\R)$ of the differential equation
$$
- v'' + 4 v - 3 \chi_0^2 v = - S_0 \, \chi_0^2.
$$
Note here that the Lax-Milgram theorem guarantees that this equation has a unique solution $v$ in $H^1(\R)$, which is in $H^2(\R)$ by standard elliptic theory. Note also that we can estimate this solution using the fact that
$$
0 \leq \int_\R |v|^2 \leq \int_\R \Big( |v'|^2 + 4 |v|^2 - 3 |\chi_0|^2 \, |v|^2 \Big) = - \int_\R S_0 \, v \, |\chi_0|^2,
$$
so that by the Cauchy-Schwarz inequality,
\begin{equation}
\label{eq:estim-int-v}
\int_\R |v|^2 \leq \int_\R S_0^2 \, |\chi_0|^4, \quad \text{ and } \quad 0 \leq - \int_\R S_0 \, v \, |\chi_0|^2 \leq \int_\R S_0^2 \, |\chi_0|^4 \leq \int_\R |\chi_0|^4 .
\end{equation}
In particular, going back to the expression of the derivative $\partial_{\lambda,\lambda} W^*$ in~\eqref{eqdllw}, we obtain
$$
6 \int_{\Omega_{d_k}} \langle S_0, \partial_{\lambda,\lambda} W^* \rangle_\C \, |\chi_k|^2 = 3 d_k \int_\R S_0 \, v \, |\chi_0|^2 \geq - 3 d_k \int_\R |\chi_0|^4.
$$
Combining this inequality with~\eqref{eq:iddd} and~\eqref{iddd2}, we are led to
$$
\partial_{\lambda, \lambda,\lambda} J(d_k, 0) = \omega \, d_k,
$$
with
$$
\omega := \frac{33}{4} \int_\R |\chi_0|^4 + 3 \int_\R S_0 \, v \, |\chi_0|^2 \geq \frac{21}{4} \int_\R |\chi_0|^4 > 0.
$$
This concludes the proof of Lemma~\ref{lem:diff-I}. \qed

%%%%%%%%%%%%%%%%%%%%%%%%%%%%%%%%
%%%%%%%%%%%%%%%%%%%%%%%%%%%%%%%%
\subsection{Expansion of the energy $E(\Psi_{k, d})$}
\label{sub:DL-E}
%%%%%%%%%%%%%%%%%%%%%%%%%%%%%%%%
%%%%%%%%%%%%%%%%%%%%%%%%%%%%%%%%

In view of~\eqref{def:Psi-d}, we can decompose the energy $E(\Psi_{k, d})$ as
\begin{equation}
\label{eq:exp-E-Psikd}
\begin{split}
E(\Psi_{k, d}) = & E \big( \Psi_0^{\bm{\lambda}(d)} \big) + \int_0^d \int_\R \Big\langle - \big( \Psi_0^{\bm{\lambda}(d)} \big)'' - \Psi_0^{\bm{\lambda}(d)} \big( 1 - |\Psi_0^{c}|^2 \big), \bm{\lambda}(d) \chi_k + W^{\bm{\lambda}(d)} \Big\rangle_\C \\
& + \int_0^d \int_\R \bigg( \frac{1}{2} \Big| \bm{\lambda}(d)\nabla \chi_k + \nabla W^{\bm{\lambda}(d)} \Big|^2 + \Big\langle \Psi_0^{\bm{\lambda}(d)}, \bm{\lambda}(d)\chi_k + W^{\bm{\lambda}(d)} \Big\rangle_\C^2 \\
& \quad \quad - \frac{1}{2} \Big( 1 - |\Psi_0^{\bm{\lambda}(d)}|^2 \Big) \Big| \bm{\lambda}(d)\chi_k + W^{\bm{\lambda}(d)} \Big|^2 + \frac{1}{4} \Big| \bm{\lambda}(d) \chi_k + W^{\bm{\lambda}(d)} \Big|^4 \\
& \quad \quad + \Big\langle \Psi_0^{\bm{\lambda}(d)}, \bm{\lambda}(d)\chi_k + W^{\bm{\lambda}(d)} \Big\rangle_\C \Big| \bm{\lambda}(d) \chi_k + W^{\bm{\lambda}(d)} \Big|^2 \bigg).
\end{split}
\end{equation}
Due to the property that $\psi_0^{\bm{\lambda}(d)}$ only depends on the $x$-variable, and the integrals of the functions $\chi_k$ and $W^{\bm{\lambda}(d)}$ with respect to the $y$-variable vanish, the integral in the first line of the previous decomposition is zero.

Concerning the energy quantity $E(\Psi_0^{\bm{\lambda}(d)})$, we use the fact that $S_0$ solves~\eqref{eq:GL} to write it as
$$
E \big( \Psi_0^{\bm{\lambda}(d)} \big) = E(S_0) + \frac{d}{2} \int_\R \Big( \big| (\Psi_0^{\bm{\lambda}(d)} - S_0)' \big|^2 - \big( 1 - S_0^2 \big) \big| \Psi_0^{\bm{\lambda}(d)} - S_0 \big|^2 + \frac{1}{2} \big( S_0^2 - |\Psi_0^{\bm{\lambda}(d)}|^2 \big)^2 \Big).
$$
We then deduce from the smoothness of the map $\lambda \mapsto \Psi_0^\lambda$ and~\eqref{ddlambdapsi} that
$$
\Psi_0^\lambda = S_0 + \frac{\lambda^2}{2} \partial_{\lambda, \lambda} \Psi_0^* + \lambda (d - d_k) \partial_{d, \lambda} \Psi_0^* + \frac{(d - d_k)^2}{2} \partial_{d, d} \Psi_0^* + \boO \Big( (d - d_k)^3 + |\lambda|^3 \Big),
$$
this expansion holding in $W_{0, \sigma}^{2, \infty}(\R)$ for $0 \leq \sigma \leq \sigma_0/2$. As a consequence of~\eqref{eq:equiv-lambda}, we obtain
\begin{equation}
\label{eq:DL-Psi0}
\Psi_0^{\bm{\lambda}(d)} = S_0 + \frac{\Lambda^2 (d - d_k)}{2 d_k} \partial_{\lambda, \lambda} \Psi_0^* + \boO \Big( (d - d_k)^\frac{3}{2} \Big),
\end{equation}
with $\Lambda = \sqrt{12 \sqrt{2}/\omega}$ as before. Relying on the smoothness of the map $\lambda \mapsto 1 - |\Psi_0^\lambda|^2$ in Proposition~\ref{prop:exist-proj-sol}, we similarly deduce that
\begin{equation}
\label{eq:DL-eta0}
1 - \big| \Psi_0^{\bm{\lambda}(d)} \big|^2 = 1 - S_0^2 - \frac{\Lambda^2 (d - d_k)}{d_k} \big\langle S_0, \partial_{\lambda \lambda} \Psi_0^* \big\rangle_\C + \boO \Big( (d - d_k)^\frac{3}{2} \Big),
\end{equation}
this asymptotics now holding in $L^2(\R)$. Inserting these identities in the previous expansion of the energy $E(\Psi_0^{\bm{\lambda}(d)})$, we are led to
\begin{align*}
E \big( \Psi_0^{\bm{\lambda}(d)} \big) = & E(S_0) + \frac{\Lambda^4 (d - d_k)^2}{8 d_k^2} \int_\R \Big( \big| (\partial_{\lambda \lambda} \Psi_0^*)' \big|^2 - \big( 1 - S_0^2 \big) \big| \partial_{\lambda \lambda} \Psi_0^* \big|^2 + 2 \big\langle S_0, \partial_{\lambda \lambda} \Psi_0^* \big\rangle_\C^2 \Big) \\
& + \boO \Big( (d - d_k)^\frac{5}{2} \Big).
\end{align*}
Going back to~\eqref{eq:dll-Psi0*}, we conclude that
$$
E \big( \Psi_0^{\bm{\lambda}(d)} \big) = E(S_0) - \frac{\Lambda^4 (d - d_k)^2}{8 d_k} \int_\R |\chi_0|^2 \,\big\langle S_0, \partial_{\lambda \lambda} \Psi_0^* \big\rangle_\C + \boO \Big( (d - d_k)^\frac{5}{2} \Big).
$$

We now deal with the integral in the second line of~\eqref{eq:exp-E-Psikd}, which we expand in the following four terms
$$
I_1 = \frac{\bm{\lambda}(d)^2}{2} \int_0^d \int_\R \Big( \big| \nabla \chi_k \big|^2 - \big( 1 - |\Psi_0^{\bm{\lambda}(d)}|^2 \big) \big| \chi_k \big|^2 + 2 \big\langle \Psi_0^{\bm{\lambda}(d)}, \chi_k \big\rangle_\C^2 \Big),
$$
\begin{align*}
I_2 = \bm{\lambda}(d) \int_0^d \int_\R \Big( \big\langle \nabla \chi_k, \nabla W^{\bm{\lambda}(d)} \big\rangle_\C - \big( 1 & - |\Psi_0^{\bm{\lambda}(d)}|^2 \big) \big\langle \chi_k, W^{\bm{\lambda}(d)} \big\rangle_\C \\
& + 2 \big\langle \Psi_0^{\bm{\lambda}(d)}, \chi_k \big\rangle_\C \big\langle \Psi_0^{\bm{\lambda}(d)}, W^{\bm{\lambda}(d)} \big\rangle_\C \Big),
\end{align*}
$$
I_3 = \frac{1}{2} \int_0^d \int_\R \Big( \big| \nabla W^{\bm{\lambda}(d)} \big|^2 - \big( 1 - |\Psi_0^{\bm{\lambda}(d)}|^2 \big) \big| W^{\bm{\lambda}(d)} \big|^2 + 2 \big\langle \Psi_0^{\bm{\lambda}(d)}, W^{\bm{\lambda}(d)} \big\rangle_\C^2 \Big),
$$
and
$$
I_4 = \int_0^d \int_\R \bigg( \Big\langle \Psi_0^{\bm{\lambda}(d)}, \bm{\lambda}(d) \chi_k + W^{\bm{\lambda}(d)} \Big\rangle_\C \Big| \bm{\lambda}(d) \chi_k + W^{\bm{\lambda}(d)} \Big|^2 + \frac{1}{4} \Big| \bm{\lambda}(d) \chi_k + W^{\bm{\lambda}(d)} \Big|^4 \bigg).
$$
Concerning the integral $I_1$, we deduce from~\eqref{def:chi-k} that
$$
I_1 = \frac{\bm{\lambda}(d)^2 \, d}{4} \int_\R \Big( \big| \chi_0' \big|^2 + \frac{\pi^2 k^2}{d^2} |\chi_0|^2 - \big( 1 - |\Psi_0^{\bm{\lambda}(d)}|^2 \big) \big| \chi_0 \big|^2 + 2 \big\langle \Psi_0^{\bm{\lambda}(d)}, i \chi_0 \big\rangle_\C^2 \Big),
$$
Combining the fact that $\chi_0$ is an eigenvector of the operator $L_0^-$ for the eigenvalue $-1/2$ with the expression of $d_k$ given by~\eqref{def:d-k}, we obtain
$$
I_1 = \frac{\bm{\lambda}(d)^2 \, d}{4} \int_\R \bigg( \Big( \frac{\pi^2 k^2}{d^2} - \frac{\pi^2 k^2}{d_k^2} \Big) |\chi_0|^2 - \big( S_0^2 - |\Psi_0^{\bm{\lambda}(d)}|^2 \big) \big| \chi_0 \big|^2 + 2 \big\langle \Psi_0^{\bm{\lambda}(d)}, i \chi_0 \big\rangle_\C^2 \bigg),
$$
In view of~\eqref{eq:equiv-lambda},~\eqref{eq:DL-Psi0} and~\eqref{eq:DL-eta0}, and since $\langle S_0, i \chi_0 \rangle_\C = 0$, we are led to
$$
I_1 = \frac{\Lambda^2 (d - d_k)^2}{4 d_k} \int_\R \Big( - 1 + \Lambda^2 \, \big\langle S_0, \partial_{\lambda \lambda} \Psi_0^* \big\rangle_\C \Big) \, \big| \chi_0 \big|^2 + \boO \Big( (d - d_k)^\frac{5}{2} \Big).
$$

In order to estimate the integrals $I_k$ for $2 \leq k \leq 4$, we next expand the map $W^{\bm{\lambda}(d)}$ with respect to $d- d_k$. More precisely, we consider as before the map $\tilde{W}^{\bm{\lambda}(d)}(x, z) = W^{\bm{\lambda}(d)}(x, d z/d_k)$. Combining Proposition~\ref{prop:exist-proj-sol} and the smoothness of the previous change of variables, we check that the map $(d, \lambda) \mapsto \tilde{W}^\lambda$ is smooth with values in $W_{0, \sigma}^{2, \infty}(\Omega_{d_k})$ for $0 \leq \sigma \leq \sigma_0/2$. As a consequence of the fact that $\tilde{W}^0 = 0$ and~\eqref{ddlambdaW}, we can expand it as
$$
\tilde{W}^\lambda = \frac{\lambda^2}{2} \partial_{\lambda, \lambda} \tilde{W}^* + \lambda (d - d_k) \partial_{d, \lambda} \tilde{W}^* + \frac{(d - d_k)^2}{2} \partial_{d, d} \tilde{W}^* + \boO \Big( (d - d_k)^3 + |\lambda|^3 \Big),
$$
this expansion holding in $W_{0, \sigma}^{2, \infty}(\Omega_{d_k})$. In view of~\eqref{eq:equiv-lambda}, this gives
\begin{equation}
\label{eq:DL-W}
\tilde{W}^{\bm{\lambda}(d)} = \frac{\Lambda^2 (d - d_k)}{2 d_k} \partial_{\lambda, \lambda} \tilde{W}^* + \boO \Big( (d - d_k)^\frac{3}{2} \Big).
\end{equation}
Applying the change of variables $y = d z/d_k$ to the integral $I_4$, and using~\eqref{eq:DL-Psi0} and~\eqref{eq:DL-W}, we are first led to
\begin{align*}
I_4 = \frac{\bm{\lambda}(d)^3 \, d}{d_k} \int_0^{d_k} \int_\R \langle S_0, \tilde{\chi}_k \rangle_\C |\tilde{\chi}_k|^2 & + \frac{\bm{\lambda}(d)^4 \, d}{4 d_k} \int_0^{d_k} \int_\R \Big( 2 \langle S_0, \partial_{\lambda, \lambda} \tilde{W}^* \rangle_\C |\tilde{\chi}_k|^2 \\
& + 4 \langle S_0, \tilde{\chi}_k \rangle_\C \, \langle \tilde{\chi}_k, \partial_{\lambda, \lambda} \tilde{W}^* \rangle_\C + |\tilde{\chi}_k|^4 \Big) + \boO \Big( (d - d_k)^\frac{5}{2} \Big).
\end{align*}
At this stage, recall that $\langle S_0, \tilde{\chi}_k \rangle_\C = 0$, and also that we can write the function $\partial_{\lambda, \lambda} \tilde{W}^*$ as
$$
\partial_{\lambda, \lambda} \tilde{W}^*(x, y) = v(x) \, \cos \Big( \frac{2 \pi k y}{d_k} \Big).
$$
In view of~\eqref{def:chi-k} and~\eqref{eq:equiv-lambda}, we therefore obtain
\begin{align*}
I_4 & = \frac{\Lambda^4 \, (d - d_k)^2}{4 d_k^2} \int_0^{d_k} \int_\R \Big( 2 \langle S_0, \partial_{\lambda, \lambda} \tilde{W}^* \rangle_\C |\tilde{\chi}_k|^2 + |\tilde{\chi}_k|^4 \Big) + \boO \Big( (d - d_k)^\frac{5}{2} \Big) \\
& = \frac{\Lambda^4 \, (d - d_k)^2}{32 d_k} \int_\R \Big( 4 S_0 \, v + 3 |\chi_0|^2 \Big) \, \big| \chi_0 \big|^2 + \boO \Big( (d - d_k)^\frac{5}{2} \Big).
\end{align*}
We argue similarly for the integral $I_3$ for which we deduce from~\eqref{eq:DL-Psi0},~\eqref{eq:DL-eta0} and~\eqref{eq:DL-W} that
$$
I_3 = \frac{\Lambda^4 \, (d - d_k)^2}{8 d_k^2} \int_0^{d_k} \int_\R \Big( \big| \nabla \partial_{\lambda, \lambda} \tilde{W}^* \big|^2 - \big( 1 - S_0^2 \big) \big| \partial_{\lambda, \lambda} \tilde{W}^* \big|^2 + 2 \big\langle S_0, \partial_{\lambda, \lambda} \tilde{W}^* \big\rangle_\C^2 \Big) + \boO \Big( (d - d_k)^\frac{5}{2} \Big),
$$
so that, by~\eqref{eqdllw} and~\eqref{eq:dll-W^*},
$$
I_3 = \frac{\Lambda^4 \, (d - d_k)^2}{16 d_k} \int_\R S_0 \, v \, |\chi_0|^2 + \boO \Big( (d - d_k)^\frac{5}{2} \Big).
$$

We next turn to the integral $I_2$. Integrating by parts and invoking the fact that $\chi_0$ is an eigenvector of the operator $L_0^-$ for the eigenvalue $-1/2$, we first obtain
\begin{align*}
I_2 = \bm{\lambda}(d) \int_0^d \int_\R \bigg( \Big( \frac{\pi^2 k^2}{d^2} - \frac{1}{2} \Big) \big\langle \chi_k, W^{\bm{\lambda}(d)} \big\rangle_\C - \big( S_0^2 & - |\Psi_0^{\bm{\lambda}(d)}|^2 \big) \big\langle \chi_k, W^{\bm{\lambda}(d)} \big\rangle_\C \\
& + 2 \big\langle \Psi_0^{\bm{\lambda}(d)}, \chi_k \big\rangle_\C \big\langle \Psi_0^{\bm{\lambda}(d)}, W^{\bm{\lambda}(d)} \big\rangle_\C \bigg)
\end{align*}
Using~\eqref{def:d-k}, and arguing as for the integral $I_4$, we then check that
$$
I_2 = \boO \Big( (d - d_k)^\frac{5}{2} \Big).
$$

Collecting the previous estimates of the integrals $I_k$, and of the energy $E(\Psi_0^{\bm{\lambda}(d)})$, we deduce from~\eqref{eq:exp-E-Psikd} that
\begin{align*}
E(\Psi_{k, d}) = E(S_0) & - \frac{\Lambda^2 (d - d_k)^2}{4 d_k} \int_\R \big| \chi_0 \big|^2 \\
+ & \frac{\Lambda^4 (d - d_k)^2}{32 d_k} \int_\R \Big( 4 \big\langle S_0, \partial_{\lambda \lambda} \Psi_0^* \big\rangle_\C + 6 S_0 \, v + 3 |\chi_0|^2 \Big) \big| \chi_0 \big|^2 \bigg) + \boO \Big( (d - d_k)^\frac{5}{2} \Big).
\end{align*}
Recall at this stage that
$$
\int_\R \chi_0^2 = 2 \sqrt{2}, \quad \text{ and } \quad \int_\R \big\langle S_0, \partial_{\lambda \lambda} \Psi_0^* \big\rangle_\C \, \big| \chi_0 \big|^2 = - \frac{1}{8} \int_\R \chi_0^4,
$$
while
$$
\Lambda^2 = \frac{12 \sqrt{2}}{\omega} = \frac{16 \sqrt{2}}{\int_\R \big( 11 |\chi_0|^4 + 4 S_0 \, v \, |\chi_0|^2 \big)}.
$$
This provides the final expansion
$$
E(\Psi_{k, d}) = E(S_0) - \frac{(d - d_k)^2}{d_k} \, \boE + \boO \Big( (d - d_k)^\frac{5}{2} \Big),
$$
with
$$
\boE := \frac{\Lambda^2}{\sqrt{2}} \bigg( 1 - \frac{\int_\R \big( 5 |\chi_0|^4 + 12 S_0 \, v \, |\chi_0|^2 \big)}{2 \int_\R \big( 11 |\chi_0|^4 + 4 S_0 \, v \, |\chi_0|^2 \big) } \bigg).
$$
In order to check that this number is positive, we go back to~\eqref{eq:estim-int-v} so as to write
$$
\frac{\int_\R \big( 5 |\chi_0|^4 + 12 S_0 \, v \, |\chi_0|^2 \big)}{2 \int_\R \big( 11 |\chi_0|^4 + 4 S_0 \, v \, |\chi_0|^2 \big) } \leq \frac{5 \int_\R |\chi_0|^4}{14 \int_\R |\chi_0|^4} = \frac{5}{14},
$$
and we conclude that
$$
\boE \geq \frac{9 \Lambda^2}{14 \sqrt{2}} > 0.
$$
This completes the proof of~\eqref{eq:DL-E}. \qed

\begin{merci}
P.~Gravejat is supported by CY Initiative of Excellence (Grant ``Investissements d'Avenir'' ANR-16-IDEX-0008). The mathematical motivation for this work emerged from the CNRS project 80|Prime, Tradisq1d, Transport and dissipation in one dimensional quantum systems.
\end{merci}

\bibliographystyle{plain}
\bibliography{Bibliogr}

\end{document}